\theoremstyle{plain}
\newtheorem{thm}{\bf Theorem}[section]
\newtheorem{prop}[thm]{\bf Proposition}
\newtheorem{lemma}[thm]{\bf Lemma}
\newtheorem{corollary}[thm]{\bf Corollary}
\theoremstyle{definition}
\newtheorem{definition}[thm]{\bf Definition}
\theoremstyle{remark}
\newtheorem{remark}[thm]{\bf Remark}
\newtheorem{example}[thm]{\bf Example}
\theoremstyle{example}
\def\NZQ{\Bbb}               
\def\NN{{\NZQ N}}
\def\ZZ{{\NZQ Z}}
\def\RR{{\NZQ R}}
\def\CC{{\NZQ C}}
\def\frk{\frak}               
\def\pp{{\frk p}}
\def\mm{{\frk m}}
\def\Phi{{\frk n}}
\def\Phi{{\frk N}}
\def\Jc{{\mathcal J}}
\def\Rc{{\mathcal R}}
\def\Cc{{\mathcal C}}
\def\ab{{\bold a}}
\def\opn#1#2{\def#1{\operatorname{#2}}} 
\opn\chara{char} \opn\length{\ell} \opn\pd{pd} \opn\rk{rk}
\opn\projdim{proj\,dim} \opn\injdim{inj\,dim} \opn\rank{rank}
\opn\depth{depth} \opn\grade{grade} \opn\height{height}
\opn\embdim{emb\,dim} \opn\codim{codim}
\def\OO{{\mathcal O}}
\opn\Tr{Tr} \opn\bigrank{big\,rank}
\opn\superheight{superheight}\opn\lcm{lcm}
\opn\trdeg{tr\,deg}
\opn\reg{reg} \opn\lreg{lreg} \opn\ini{in} \opn\lpd{lpd}
\opn\size{size} \opn\sdepth{sdepth}
\opn\link{link}\opn\fdepth{fdepth}\opn\lex{lex}
\opn\div{div} \opn\Div{Div} \opn\cl{cl} \opn\Cl{Cl}
\opn\Spec{Spec} \opn\Supp{Supp} \opn\supp{supp} \opn\Sing{Sing}
\opn\Ass{Ass} \opn\Min{Min}\opn\Mon{Mon}
\opn\Ann{Ann} \opn\Rad{Rad} \opn\Soc{Soc}
\opn\Im{Im} \opn\Ker{Ker} \opn\Coker{Coker} \opn\Am{Am}
\opn\Hom{Hom} \opn\Tor{Tor} \opn\Ext{Ext} \opn\End{End}
\opn\Aut{Aut} \opn\id{id}
\opn\nat{nat}
\opn\pff{pf}
\opn\Pf{Pf} \opn\GL{GL} \opn\SL{SL} \opn\mod{mod} \opn\ord{ord}
\opn\Gin{Gin} \opn\Hilb{Hilb}\opn\sort{sort}
\opn\aff{aff} \opn\con{conv} \opn\relint{relint} \opn\st{st}
\opn\lk{lk} \opn\cn{cn} \opn\core{core} \opn\vol{vol}
\opn\link{link} \opn\star{star}\opn\lex{lex}\opn\set{set}
\opn\gr{gr}
\def\pot#1#2{#1[\kern-0.28ex[#2]\kern-0.28ex]}
\opn\dirlim{\underrightarrow{\lim}}
\opn\inivlim{\underleftarrow{\lim}}
\def \GL{{\operatorname{GL}}}
\def \Sym{{\operatorname{Sym}}}
\def \chara{{\operatorname{char}}}
\def \height{{\operatorname{ht}}}
\def \reg{{\operatorname{reg}}}
\def \depth{{\operatorname{depth}}}
\def \Gin{{\operatorname{Gin}}}
\def \grade{{\operatorname{grade}}}
\def \Ker{{\operatorname{Ker}}}
\def \pd{{\operatorname{pd}}}
\def \mm{{\mathfrak{m}}}
\def \NN{\mathbb N}
\def \ZZ{\mathbb Z}
\def \B{\mathcal B}
\def \S{\mathcal S}
\def \chara{{\operatorname{char}}}
\def \init{{\operatorname{in}}}
\def \height{{\operatorname{ht}}}
\def \reg{{\operatorname{reg}}}
\def \depth{{\operatorname{depth}}}
\def \Gin{{\operatorname{Gin}}}
\def \grade{{\operatorname{grade}}}
\def \Ker{{\operatorname{Ker}}}
\def \pd{{\operatorname{pd}}}
\def \mm{{\mathfrak{m}}}
\def \NN{\mathbb N}
\def \ZZ{\mathbb Z}
\def \S_d{\mathcal{M}(d)}
\def \S{\mathcal S}
\def \P{\mathcal P}
\def \H{\mathcal H}
\def \b{\bullet}
\def \StAss{\overline{\mathrm{StAss}}}
\def \fpt{\mathrm{fpt}}
\def \lct{\mathrm{lct}}
\def \gin{\mathrm{gin}}
\def \tl{{}^{\textup{t}\negthinspace}}
\def\ini{\operatorname{\rm in}}
\begin{document}
\title{Test, multiplier and invariant ideals}
\author{In\^es Bonacho Dos Anjos Henriques}
\thanks{The authors were supported by the EPSRC grant EP/J005436/1 (IBH), and  by PRIN  2010S47ARA\_003 ``Geometria delle Variet\`a Algebriche" (MV).}
\address{School of Mathematics and Statistics, 
University of Sheffield, 
Hounsfield Road, Sheffield S3 7RH, United Kingdom.}
\email{i.henriques@sheffield.ac.uk}
\author{Matteo Varbaro}
\address{Dipartimento di Matematica, 
Universit\`a degli Studi di Genova, 
Via Dodecaneso 35, 16146 Genova, Italy}
\email{varbaro@dima.unige.it}
\date{}
\maketitle

\begin{abstract}
This paper gives an explicit formula for the multiplier ideals, and consequently for the log canonical thresholds, of any $\GL(V)\times \GL(W)$-invariant ideal in $S=\Sym(V\otimes W^*)$, where $V$ and $W$ are vector spaces over a field of characteristic 0. This characterization is done in terms of a polytope constructed from the set of Young diagrams corresponding to the Schur modules generating the ideal. 

Our approach consists in computing the test ideals of some invariant ideals of $S$ in positive characteristic: Namely, we will compute the test ideals (and so the $F$-pure thresholds) of any sum of products of determinantal ideals. Even in characteristic 0, not all the invariant ideals are as the latter, but they are up to integral closure, and this is enough to reach our goals.

The results concerning the test ideals are obtained as a consequence of general results holding true in a special situation. Within such framework fall determinantal objects of a generic matrix, as well as of a symmetric matrix and of a skew-symmetric one. Similar results are thus deduced for the $\GL(V)$-invariant ideals in $\Sym(\Sym^2V)$ and in $\Sym(\bigwedge^2V)$. (Also monomial  ideals fall in this framework, thus we recover Howald's formula for their multiplier ideals and, more generally, we get the formula for their test ideals). During the proof, we introduce the notion of ``floating test ideals", a property that in a sense is satisfied by ideals defining schemes with singularities as nice as possible. As we will see, products of determinantal ideals, and by passing to characteristic 0 ideals generated by a single Schur module, have this property. 
\end{abstract}

\section{Introduction}

Given an ideal $I\subseteq K[x_1,\ldots ,x_N]$, where $K$ is a field of characteristic $0$, its {\it multiplier ideals} $\Jc(\lambda\b I)$ (where $\lambda\in\RR_{>0}$) are defined by meaning of a log-resolution. The {\it log-canonical threshold} of $I$ is just the least $\lambda$ such that $\Jc(\lambda\b I)\subsetneq K[x_1,\ldots ,x_N]$. In the words of Lazarsfeld \cite{La2}, ``the intuition is that these ideals will measure the singularities of functions $f\in I$, with `nastier' singularities being reflected in `deeper' multiplier ideals''. In this paper, we will give explicit formulas for the multiplier ideals (and therefore for the log-canonical thresholds) of all the $G$-invariant ideals in the following polynomial rings $S$ over a field of characteristic 0:

\begin{itemize}
\item[(i)] $S=\Sym(V\otimes W^*)$, where $V$ and $W$ are finite $K$-vector spaces, $G=\GL(V)\times \GL(W)$ and the action extends the diagonal one on $V\otimes W^*$ (Theorem \ref{thm:mult}).
\item[(ii)] $S=\Sym(\Sym^2V)$, where $V$ is a finite $K$-vector spaces, $G=\GL(V)$ and the action extends the natural one on $\Sym^2V$ (Theorem \ref{thm:multsym}). 
\item[(iii)] $S=\Sym(\bigwedge^2V)$, where $V$ is a finite $K$-vector spaces, $G=\GL(V)$ and the action extends the natural one on $\bigwedge^2V$ (Theorem \ref{thm:multpf}). 
\end{itemize} 

\vspace{2mm}

The above results are obtained via reduction to characteristic $p>0$: If $I\subseteq K[x_1,\ldots ,x_N]$, where $K$ is a field of characteristic $p$, its {\it (generalized) test ideals} $\tau(\lambda\b I)$ (where $\lambda\in\RR_{>0}$) are defined by using tight closure ideas involving the Frobenius endomorphism. The connection between multiplier and test ideals is given by Hara and Yoshida \cite{Hara-Yoshida}, in a sense explaining why statements originally proved by using the theory of multiplier ideals often admit a proof also via the Hochster-Huneke theory of tight closure \cite{HH:JAMS}: Roughly speaking, if $p\gg 0$, test ideals and (the reduction mod $p$ of) multiplier ideals are the ``same''. We give a general result for computing all test ideals of classes of ideals $I$ satisfying certain conditions in polynomial rings $S$ over a field of characteristic $p>0$ (Theorem \ref{thm:main2}). To give an idea, such
conditions, quite combinatorial in nature, involve the existence of a polytope controlling the integral closure of the powers of $I$, and the existence of a pair consisting in a polynomial of $S$ and in a term ordering on $S$ having properties depending on the coordinates of the real vector space in which the polytope lives (which correspond to suitable $\pp\in\Spec(S)$) and their weights (which are $\height(\pp)$) (see \ref{def:ast} for the precise definition). One can show that these conditions are satisfied by the following classes of ideals, for whose test ideals we therefore obtain explicit formulas (and so for the $F$-pure thresholds, that are interestingly independent on the characteristic of the base field):

\begin{itemize}
\item[(i)] Ideals $I\subseteq S=K[X]$, where $X$ is a generic matrix, which are sums of products of determinantal ideals of $X$ (Corollary \ref{cor:genericsum}).
\item[(ii)] Ideals $I\subseteq S=K[Y]$, where $Y$ is a symmetric matrix, which are sums of products of determinantal ideals of $Y$ (Corollary \ref{cor:symmetricsum}).
\item[(iii)] Ideals $I\subseteq S=K[Z]$, where $Z$ is a skew-symmetric matrix, which are sums of products of Pfaffian ideals of $Z$ (Corollary \ref{cor:skew-symmetricsum}). 
\end{itemize} 

The polynomial rings with the $G$-actions described at the beginning, of course, can be defined in any characteristic. Indeed, there are $G$-equivariant isomorphisms with the above polynomial rings endowed with suitable actions. With respect to such suitable actions, the above ideals $I$ are $G$-invariant, although there are many more $G$-invariant ideals (even in characteristic 0); on the other hand they are ``enough", essentially thanks to results obtained by DeConcini, Eisenbud and Procesi in \cite{DEP} (also the classification of the $G$-invariant ideals of $\Sym(V\otimes W^*)$, in characteristic 0, is in \cite{DEP}). The described results broadly generalize theorems of: 
\begin{itemize}
\item[(i)] Johnson \cite{Johnson} who in her PhD thesis computed the multiplier ideals of determinantal ideals, which are evidently $G$-invariant ideals of $\Sym(V\otimes W^*)$. 
\item[(ii)] Docampo \cite{Docampo}, who computed the log-canonical threshold of determinantal ideals using different methods to the one used by Johnson.
\item[(iii)] Miller, Singh and Varbaro \cite{MSV}, who computed the $F$-pure threshold of determinantal ideals.
\item[(iv)] Henriques \cite{He}, who computed the test ideals of the determinantal ideal generated by the maximal minors of  the matrix $X$. 
\end{itemize}
Theorem \ref{thm:main2} does not concern only determinantal objects: also monomial ideals satisfy the condition of Definition \ref{def:ast}, being that the integral closure of monomial ideals is controlled by the Newton polytope. As an immediate consequence, we obtain a formula for the test ideals of a monomial ideal (Remark \ref{rem:mon}). In particular, we recover the formula for the multiplier ideals of a monomial ideal established by Howald in \cite{Howald}.

Of course, from the results described above, one can read all the jumping numbers for the multiplier ideals, as well as the $F$-jumping numbers, of all the involved ideals. Interestingly, these invariants agree independently of the characteristic.

\vspace{2mm}

The results described above are included in Section \ref{sec:mult} (the last section). In Section \ref{sec:floating}, we prove that the test ideals $\tau(\lambda\b I)$ are always contained in an ideal defined through a valuation, depending on $I$, on $\Spec(S)$ (Proposition \ref{prop:generalinclusion}).
This motivates the introduction of the class of ideals with {\it floating test ideals} as the ideals for which the equality in Proposition \ref{prop:generalinclusion} holds (Definition \ref{def:floating}). In a sense we can say that ideals with floating test ideals define schemes with  singularities as nice as possible. Also, in this case, we can identify a class of ideals of $S$ having floating test ideals (Theorem \ref{thm:main}). As a corollary, we get that the following classes of ideals have floating test ideals:

\begin{itemize}
\item[(i)] Ideals $I\subseteq S=K[X]$, where $X$ is a generic matrix, which are products of determinantal ideals of $X$ (Corollary \ref{cor:generic}).
\item[(ii)] Ideals $I\subseteq S=K[Y]$, where $Y$ is a symmetric matrix, which are products of determinantal ideals of $Y$ (Corollary \ref{cor:symmetric}).
\item[(iii)] Ideals $I\subseteq S=K[Z]$, where $Z$ is a skew-symmetric matrix, which are products of Pfaffian ideals of $Z$ (Corollary \ref{cor:skew-symmetric}). 
\end{itemize} 
In characteristic 0, by defining the class of ideals having {\it floating multiplier ideals} in an analogous way, we have that the ideals of $\Sym(V\otimes W)$, $\Sym(\Sym^2V)$ and $\Sym(\bigwedge^2V)$ generated by an irreducible $G$-representation have floating multiplier ideals.

\vspace{2mm}

In Section \ref{sec:basics}, we will recall the tools needed from representation theory and ASL (Algebras with Straightening Law) theory, the definition of multiplier and test ideals, and some basic properties of test ideals.

\bigskip

\noindent {\bf Acknowledgements}: The second author of this paper would like to thank Mihnea Popa for bringing his attention, after having attended a seminar on this work, to Howald's article \cite{Howald}, leading to the writing of Remark \ref{rem:mon}.

\section{Setting the table}\label{sec:basics}

Throughout, $N$ will be a positive integer, $K$ a field and $S$ the symmetric algebra of an $N$-dimensional $K$-vector space. In other words, $S$ is a polynomial ring $K[x_1,\ldots ,x_N]$ in $N$ variables over $K$.

\subsection{Multiplier ideals}
If $K=\CC$, $\lambda\in\RR_{>0}$ and $I=(f_1,\ldots ,f_r)\subseteq S$, the {\it multiplier ideal} with coefficient $\lambda$ of $I$ is defined as
\begin{equation}\label{eq:m1}
\Jc(\lambda\b I):=\bigg\{g\in S:\frac{|g|}{(\sum_{i=1}^r|f_i|^2)^{\lambda}}\in L^1_{\mathrm{loc}}\bigg\},
\end{equation}
where $L^1_{\mathrm{loc}}$ denotes the space of locally integrable functions. This definition is quite analytic, the following definition is more geometric: If $\chara(K)=0$, $\lambda\in\RR_{>0}$ and $I$ is an ideal of $S$, the multiplier ideal with coefficient $\lambda$ of $I$ is
\begin{equation}\label{eq:m2}
\Jc(\lambda\b I):=\pi_*\OO_X(K_{X/\Spec(S)}-\lfloor \lambda\cdot F\rfloor)\footnote{While in \eqref{eq:m1} the multiplier ideal is an actual ideal of $S$, the multiplier ideal of \eqref{eq:m2} is a sheaf of ideals of $\Spec(S)$. Being $\Spec(S)$ affine, we feel free confuse the two notions.},
\end{equation}
where:
\begin{itemize}
\item[(i)] $\pi:X\longrightarrow \Spec(S)$ is a log-resolution of the sheafication $\widetilde{I}$ of $I$.
\item[(ii)] $\pi^{-1}\big(\widetilde{I}\big)=\OO_X(-F)$.
\item[(iii)] $K_{X/\Spec(S)}$ is the relative canonical divisor.
\end{itemize}
This simply means that $X$ is non-singular, $F$ is an effective divisor, the exceptional locus $E$ of $\pi$ is a divisor and $F+E$ has simple normal crossing support. Log-resolutions like this, in characteristic 0, always exist, essentially by Hironaka's celebrated result on resolution of singularities \cite{Hironaka}.

The {\it log-canonical threshold} of an ideal $I\subseteq S$ is:
\[\lct(I)=\min\{\lambda\in\RR_{>0}:\Jc(\lambda\b I)\neq S\}.\]

\subsection{Young diagrams}
A {\it (Young) diagram} is a vector $\sigma=(\sigma_1,\ldots ,\sigma_k)$ with positive integers as entries, such that $\sigma_1\geq \sigma_2\geq\ldots \geq \sigma_k\geq 1$. We say that $\sigma$ has $k$ {\it parts} and {\it height} $\sigma_1$. Given a positive integer $k$, we denote by $\P_k$ the set of diagrams with at most $k$ parts, and by $\H_k$ the set of diagrams with height at most $k$. 

The writing $\sigma=(r_1^{s_1},r_2^{s_2},\ldots )$ means that the first $s_1$ entries of $\sigma$ are equal to $r_1$, the following $s_2$ entries of $\sigma$ are equal to $r_2$ and so on... Given two diagrams $\sigma=(\sigma_1,\ldots ,\sigma_k)$ and $\tau=(\tau_1,\ldots ,\tau_h)$, for $\sigma\subseteq \tau$ we mean that $k\leq h$ and $\sigma_i\leq \tau_i$ for all $i=1,\ldots ,k$. Given a diagram $\sigma$, its {\it transpose} is 
the diagram $\tl \sigma$ given by $\tl\sigma_i=|\{j:\sigma_j\geq i\}|$.

Given a diagram $\sigma=(\sigma_1,\ldots ,\sigma_k)$, the following {\it $\gamma$-functions} will play an important role in many parts of the paper:
\begin{equation}\label{eq:gammafunctions}
\gamma_t(\sigma)=\sum_{i=1}^k\max\{0,\sigma_i-t+1\} \ \ \ \forall \ t\in\NN
\end{equation}
Given a subset of diagrams with at most $k$ parts, say $\Sigma\subseteq \H_k$, we denote by $P_{\Sigma}\subseteq \RR^k$ the convex hull of the set $\{(\gamma_1(\sigma),\gamma_2(\sigma),\ldots ,\gamma_k(\sigma)):\sigma\in\Sigma\}$. Such a polyhedron will be fundamental in our results. Notice that, if $\Sigma$ is a finite set, then $P_{\Sigma}$ is a polytope, and for the applications we are interested in we can always reduce to such a case.

Let $V$ be a $K$-vector space of dimension $n$. If $\chara(K)=0$, there is a bi-univocal correspondence between diagrams in $\P_n$ and irreducible polynomial representations of $\GL(V)$. Namely, to a diagram $\sigma$ corresponds the {\it Schur module} $S_{\sigma}V$; for example, if $\sigma =(k)$ then $S_{\sigma}V=\Sym^kV$, and if $\sigma =(1^k)$ then $S_{\sigma}V=\bigwedge^kV$.

\subsection{Representation theory and commutative algebra in $\Sym(V\otimes W^*)$}\label{sec:generic}

Let $m\leq n$ positive integers, $V$ be a $K$-vector space of dimension $m$, $W$ be a $K$-vector space of dimension $n$ and $S=\Sym(V\otimes W^*)$. On $S$ there is a natural action of the group $G=\GL(V)\times \GL(W)$ and, if $\chara(K)=0$, the \emph{Cauchy formula}
\[S=\bigoplus_{\sigma\in \P_m}S_{\sigma}V\otimes S_{\sigma}W^*\]
is the decomposition of $S$ in irreducible $G$-representations (cf. \cite[Corollary 2.3.3]{We}). Let us assume for a moment that $\chara(K)=0$. Under such an assumption, the $G$-invariant ideals of $S$ were described by DeConcini, Eisenbud and Procesi in \cite{DEP}: They are the $G$-subrepresentations of $S$ of the form
\[\bigoplus_{\sigma\in\Sigma}S_{\sigma}V\otimes S_{\sigma}W^*,\]
where $\Sigma\subseteq \P_m$ is such that $\tau\in\Sigma$ whenever there is $\sigma\in \Sigma$ with $\sigma\subseteq \tau$.
For a diagram $\sigma$ with at most $m$ parts, we will denote the ideal generated by the irreducible $G$-representation $S_{\sigma}V\otimes S_{\sigma}W^*$ by $I_{\sigma}$. Indeed,
\[I_{\sigma}=\bigoplus_{\tau\supseteq \sigma}S_{\tau}V\otimes S_{\tau}W^*.\]
More generally, for any set $\Sigma\subseteq \P_m$ let us put: 
\[I(\Sigma)=\sum_{\sigma\in\Sigma}I_{\sigma}=\bigoplus_{\substack{\tau\supseteq \sigma \\ \mbox{{\scriptsize for some }}\sigma\in\Sigma}}S_{\tau}V\otimes S_{\tau}W^*.\]

In positive characteristic, the situation is more complicated from the view-point of the action of $G$. A characteristic-free approach to the study of ``natural ideals'' in $S$ is by meaning of standard monomial theory: The ring $S$ can be seen as the polynomial ring $K[X]$ whose variables are the entries of a generic $m\times n$-matrix $X$. A distinguished ideal of such a ring is the ideal $I_t$ generated by the $t$-minors of $X$, where $t\leq m$. In characteristic 0, $I_t$ coincides with the ideal $I_{(1^t)}$. Other interesting ideals of $S$ are
\begin{equation}\label{eq:Dsigma}
D_{\sigma}=I_{\sigma_1}I_{\sigma_2}\cdots I_{\sigma_k},
\end{equation}
where $\sigma=(\sigma_1,\ldots ,\sigma_k)\in\H_m$. The integral closures of such ideals have a nice primary decomposition, with the symbolic powers of the ideals $I_t$ as primary components. As we are going to see soon, such symbolic powers are particularly easy to describe. By a {\it product of minors} we mean a product $\Pi=\delta_1\cdots \delta_k\in S$ where $\delta_i$ is a $\sigma_i$-minor of $X$ and $\sigma_1\geq \sigma_2\geq\ldots \geq \sigma_k\geq 1$. We refer to the diagram $\sigma=(\sigma_1,\ldots ,\sigma_k)$ as the  {\it shape} of $\Pi$. As shown in Theorem \ref{thm:symb}, the symbolic powers of $I_t$ are generated by product of minors of certain shapes described by the following $\gamma$-functions defined in \eqref{eq:gammafunctions}.

\begin{thm}\label{thm:symb} \cite[Theorem 7.1]{DEP}.
For any $t\leq m$ and $s\in\NN$, the symbolic power $I_t^{(s)}$ is generated by the products of minors whose shapes $\sigma$ satisfy
\[\gamma_t(\sigma)\geq s.\]
\end{thm}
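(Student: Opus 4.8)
The statement to prove: $I_t^{(s)}$ is generated by products of minors whose shape $\sigma$ satisfies $\gamma_t(\sigma) \geq s$. This is a result of DeConcini-Eisenbud-Procesi, so the plan is to reconstruct their argument using standard monomial theory (ASL machinery). Let me sketch how I would approach it.

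First, I would recall that the determinantal ring $K[X]/I_t$ is a domain and that $I_t$ is a prime ideal of height $(m-t+1)(n-t+1)$. The symbolic power is $I_t^{(s)} = I_t^s S_{\pp_t} \cap S$ where $\pp_t = I_t$; equivalently, it consists of all $f \in S$ vanishing to order $\geq s$ along the generic point of $V(I_t)$. The key structural input is that $S = K[X]$ is an algebra with straightening law (ASL) on the poset of minors, ordered so that the standard monomials (products of minors forming a chain) form a $K$-basis. I would want the fact that every product of minors can be straightened into a $K$-linear combination of standard products of minors, and crucially that this straightening does not decrease $\gamma_t$ — i.e., $\gamma_t$ is constant on each straightening relation, or at least the standard monomials appearing all have $\gamma_t$ at least that of the original. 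This monotonicity/invariance of $\gamma_t$ under straightening is what lets one reduce everything to standard monomials.

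\textbf{First direction ($\supseteq$).} I would show that if $\Pi = \delta_1 \cdots \delta_k$ is a product of minors with $\gamma_t(\sigma) \geq s$, then $\Pi \in I_t^{(s)}$. Since $\gamma_t(\sigma) = \sum_i \max\{0, \sigma_i - t + 1\}$, each factor $\delta_i$ with $\sigma_i \geq t$ lies in $I_t$, and more refined: a single $r$-minor with $r \geq t$ lies in $I_t^{(r-t+1)}$ (this is itself an instance of the theorem for a one-row shape, and can be proven directly by localizing at $I_t$ and using that in the localization the $t$-minors generate the maximal ideal, with an $r$-minor expressible via Laplace-type expansions as an element of the $(r-t+1)$-st power). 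Multiplying these membership statements across $i$ gives $\Pi \in I_t^{(\sum_i (\sigma_i - t + 1)_+)} = I_t^{(s)}$, using $I_t^{(a)} \cdot I_t^{(b)} \subseteq I_t^{(a+b)}$.

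\textbf{Second direction ($\subseteq$).} This is the harder inclusion and the main obstacle. I would take an arbitrary $f \in I_t^{(s)}$, write it via straightening as a $K$-linear combination of standard products of minors $\Pi_j$ with shapes $\sigma^{(j)}$, and I must show each $\Pi_j$ appearing with nonzero coefficient has $\gamma_t(\sigma^{(j)}) \geq s$. Suppose not; let $\Pi_j$ have $\gamma_t(\sigma^{(j)}) = s' < s$ minimal among the appearing standard monomials. The strategy is to localize at $\pp_t = I_t$: in $S_{\pp_t}$, the residue classes of the $t$-minors behave like a regular system of parameters (or at least the associated graded along $I_t$ is a polynomial ring on them up to the right interpretation), so that the $I_t$-adic order of a standard product of minors $\Pi_j$ is exactly $\gamma_t(\sigma^{(j)})$. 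Because standard monomials are linearly independent and their $I_t$-adic orders are computed termwise, no cancellation can raise the order: the order of $f$ is $\min_j \gamma_t(\sigma^{(j)}) = s' < s$, contradicting $f \in I_t^{(s)}$. Making "the $I_t$-adic order of a standard monomial equals $\gamma_t$ of its shape" precise is the crux — it requires knowing that the leading forms of distinct standard monomials in $\gr_{I_t}(S)$ remain linearly independent, which follows from the ASL structure of the Rees/associated graded algebra (or from DeConcini-Eisenbud-Procesi's analysis of the "symbolic Rees algebra" of $I_t$, which is itself an ASL). I would cite \cite{DEP} for this associated-graded computation rather than redo it, and assemble the two inclusions to conclude.
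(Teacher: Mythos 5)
The paper offers no proof of this statement: it is quoted directly from DeConcini--Eisenbud--Procesi \cite{DEP}, so there is no internal argument to measure yours against; the only fair comparison is with the proof in the cited source, and your sketch does reconstruct that proof's architecture correctly. The easy inclusion is fine: the cleanest way to see that an $r$-minor lies in $I_t^{(r-t+1)}$ is to invert a $(t-1)$-minor (possible in $S_{\pp_t}$ since $I_{t-1}\supsetneq I_t$), use the standard localization isomorphism identifying $I_t$ with the ideal of entries of a generic $(m-t+1)\times(n-t+1)$ matrix and sending an $r$-minor to a unit times an $(r-t+1)$-minor of that smaller matrix; your ``Laplace-type expansion'' gestures at this, and the additivity $I_t^{(a)}I_t^{(b)}\subseteq I_t^{(a+b)}$ then gives the claim for arbitrary shapes. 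Your claim that straightening does not decrease $\gamma_t$ is also correct and is genuinely DEP's key combinatorial lemma: a product of a $p$-minor and a $q$-minor straightens into products of an $a$-minor and a $b$-minor with $a+b=p+q$ and $a\geq\max(p,q)\geq\min(p,q)\geq b$, and convexity of $x\mapsto\max\{0,x-t+1\}$ yields $\gamma_t(a,b)\geq\gamma_t(p,q)$.

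The caveat is that your hard inclusion is not self-contained where it matters most. The two facts you invoke there --- that the symbolic order of a standard monomial of shape $\sigma$ equals $\gamma_t(\sigma)$, and that standard monomials with $\gamma_t(\sigma)=j$ stay linearly independent modulo $\pp_t^{(j+1)}$ --- are jointly equivalent to the assertion that the associated graded of the symbolic filtration has the standard monomials as a basis graded by $\gamma_t$, which is essentially the content of Theorem 7.1 of \cite{DEP} itself. Deferring that step to \cite{DEP}, as you propose, makes the argument circular as a standalone proof (one might as well cite the theorem outright, as the paper does). To close the gap one must actually prove it, e.g.\ by DEP's route: the span $J_s$ of standard monomials with $\gamma_t\geq s$ is an ideal by the straightening lemma, contains $I_t^s$, and is shown to equal $I_t^{(s)}$ by checking equality after inverting a $(t-1)$-minor, where the question reduces by the localization isomorphism to ordinary powers of the ideal of entries of a generic matrix. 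With that substitution your outline becomes a complete and faithful rendering of the cited proof.
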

So, the next result implies that to check whether a product of minors is integral over $D_{\sigma}$ is immediate.

\begin{thm} \cite[Theorem 1.3 and Remark 1.6]{Bruns} \label{thm:Dsigmadiamond}.
For a diagram $\sigma\in \H_m$, the integral closure of $D_{\sigma}$ is
\[\bigcap_{i=1}^mI_i^{(\gamma_i(\sigma))}.\]
\end{thm}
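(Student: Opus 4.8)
The plan is to prove the two inclusions separately, exploiting throughout the fact that both $\overline{D_\sigma}$ and $\bigcap_i I_i^{(\gamma_i(\sigma))}$ are integrally closed monomial-like ideals in the sense of standard monomial theory, so that it suffices to compare their generating sets of products of minors. For the easy inclusion $\overline{D_\sigma}\subseteq\bigcap_{i=1}^m I_i^{(\gamma_i(\sigma))}$, I would first check that $D_\sigma\subseteq I_i^{(\gamma_i(\sigma))}$ for every $i$: a generator of $D_\sigma=I_{\sigma_1}\cdots I_{\sigma_k}$ is a product of minors of shape $\tau$ with $\tau\supseteq\sigma$ (more precisely $\tau_j\ge\sigma_j$ after sorting), whence $\gamma_i(\tau)\ge\gamma_i(\sigma)$ for all $i$, so the generator lies in $I_i^{(\gamma_i(\sigma))}$ by Theorem \ref{thm:symb}. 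Since each $I_i^{(\gamma_i(\sigma))}$ is integrally closed (symbolic powers of primes are integrally closed in a normal domain, as $S/I_i$ is normal), the integral closure $\overline{D_\sigma}$ is contained in the intersection.

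The substantive direction is $\bigcap_{i=1}^m I_i^{(\gamma_i(\sigma))}\subseteq\overline{D_\sigma}$. Here I would pass to the associated graded / initial-ideal picture: both sides are generated by products of minors, and the key combinatorial claim is that if a product of minors $\Pi$ of shape $\tau$ lies in $\bigcap_i I_i^{(\gamma_i(\sigma))}$ — equivalently $\gamma_i(\tau)\ge\gamma_i(\sigma)$ for all $i=1,\dots,m$ — then $\Pi$ is integral over $D_\sigma$. To see the latter, it is enough to exhibit some power $\Pi^N$ lying in $(D_\sigma)^N$, i.e. to show that for $N\gg 0$ the product-of-minors $\Pi^N$ (of shape $N\cdot\tau$, interpreted as $\tau$ repeated $N$ times) can be rewritten, using the straightening law, as a product of $N$ products of minors each of shape $\supseteq\sigma$. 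The clean way to package this is the elementary fact about the $\gamma$-functions: the condition $\gamma_i(\tau)\ge\gamma_i(\sigma)$ for all $i$ is equivalent to saying that the "column-count vector" of $\tau$ dominates that of $\sigma$ in a suitable majorization order, which in turn is equivalent to the existence, for large $N$, of a partition of the $N|\tau|$ cells of $N\tau$ into $N$ shapes each containing $\sigma$. I would make this precise by noting $\gamma_i(\sigma)=\sum_{j\ge i}\tl\sigma_j$, translating the inequalities into statements about the conjugate partitions, and then invoking a Hall/Birkhoff–von Neumann type transversal argument to build the decomposition of $N\tau$.

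The main obstacle, as I see it, is bridging the gap between "$\Pi^N$ is a product of minors of shape $N\tau$" and "$\Pi^N\in (D_\sigma)^N$": a priori $\Pi^N$ is one specific product of minors, not an arbitrary one of that shape, so one cannot simply regroup its factors. The resolution is the straightening law in the ASL $S=K[X]$: any product of minors of a given shape is, modulo products of minors of strictly larger shape in the straightening order, a linear combination of standard monomials of the same shape, and — crucially — membership in $D_\sigma$ (being the ideal $I_{\sigma_1}\cdots I_{\sigma_k}$, generated by an ASL ideal-type condition on shapes) only depends on the shape. So I would argue that $\Pi^N$, having shape $N\tau$ with $N\tau$ decomposable into $N$ sub-shapes each $\supseteq\sigma$, lies in $D_\sigma^N$ up to terms of larger shape, and then close the induction on the straightening order since larger shapes satisfy the $\gamma$-inequalities a fortiori. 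This reduces the whole theorem to the purely combinatorial lemma on $\gamma$-functions and partition decompositions, which is where the real work lies; the algebra is then a standard application of Theorem \ref{thm:symb} together with normality of the determinantal rings $S/I_t$.
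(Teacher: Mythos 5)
First, note that the paper offers no proof of this statement: it is imported verbatim from \cite[Theorem 1.3 and Remark 1.6]{Bruns} (the characteristic-zero case going back to \cite{DEP}), so there is no in-paper argument to compare yours against; I can only assess the proposal on its own terms. Your easy inclusion is essentially correct, with one small repair: the reason each $I_i^{(\ell)}$ is integrally closed is not normality of $S/I_i$ but regularity of $S$ --- one has $I_i^{(\ell)}=(I_iS_{I_i})^{\ell}\cap S$, powers of the maximal ideal of the regular local ring $S_{I_i}$ are integrally closed (they are order-valuation ideals), and contractions of integrally closed ideals are integrally closed. You also implicitly use that $\bigcap_i I_i^{(\gamma_i(\sigma))}$ is spanned by (standard) products of minors whose shapes satisfy all the $\gamma$-inequalities; this is true but should be flagged as an input from the standard monomial theory of \cite{DEP}.

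The hard inclusion, however, has a genuine gap at exactly the point you identify as ``where the real work lies''. The only factorizations of $\Pi^N$ your algebra supports are (i) regrouping the minor factors, i.e.\ partitioning the \emph{rows} of $N\tau$, and (ii) Laplace expansion $I_t\subseteq I_sI_{t-s}$, i.e.\ splitting a row into pieces; an arbitrary partition of the \emph{cells} of $N\tau$ corresponds to no identity among minors, so the transversal argument you propose proves a statement that does not feed back into the algebra. Worse, the row-respecting statement you would actually need is false. Take $\sigma=(2,2)$ and $\tau=(3,1)$ in a generic matrix with $m,n\ge 3$: then $\gamma_1(\tau)=4=\gamma_1(\sigma)$, $\gamma_2(\tau)=2=\gamma_2(\sigma)$, $\gamma_3(\tau)=1\ge 0$, so the theorem asserts that a $3$-minor times an entry is integral over $D_{(2,2)}=I_2^2$. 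But $N\tau=(3^N,1^N)$ has $4N$ cells, so each of the $N$ groups would have to be exactly $(2,2)$, requiring $2N$ sub-rows of length $2$, whereas splitting the rows of $(3^N,1^N)$ yields at most $N$ sub-rows of length $\ge 2$ (at most one from each row of length $3$, none from the rows of length $1$). No such decomposition exists for any $N$, so your reduction would ``prove'' non-integrality here. (The element is in fact integral: when the entry's indices lie among those of the minor, $x_{ij}\delta$ is up to sign a $2$-minor of the adjugate of the relevant $3\times 3$ block, hence a difference of products of two $2$-minors of $X$, so it even lies in $I_2^2$.) Your fallback --- that membership in $D_\sigma$ depends only on the shape, by straightening --- is precisely the hard content being proved: in characteristic $0$ it holds because $D_\sigma$ is $G$-stable, but then determining \emph{which} shapes occur in $D_\sigma^N$ is the nontrivial Littlewood--Richardson analysis of \cite[Sections 6--8]{DEP}, and in positive characteristic that shortcut is unavailable and \cite{Bruns} must instead argue via the initial algebra and the normality of an associated semigroup ring. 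As it stands the proposal does not close, and the statement is best left as the citation the paper gives.
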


More generally, given a set $\Sigma\subseteq \H_m$, let us put $D(\Sigma)=\sum_{\sigma\in\Sigma}D_{\sigma}$.
Also for the integral closure of such ideals there is a nice description, in terms of the polyhedron $P_{\Sigma}\subseteq \RR^m$. 

\begin{thm}\label{thm:icsumsofprod}
For a subset $\Sigma\subseteq \H_m$, the integral closure of $D(\Sigma)$ is equal to
\[\sum_{\ab=(a_1,\ldots ,a_m)\in P_{\Sigma}}\left(\bigcap_{i=1}^mI_i^{(\lceil a_i\rceil)}\right).\]
\end{thm}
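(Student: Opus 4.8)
The plan is to prove the two inclusions separately, using Theorem~\ref{thm:Dsigmadiamond} to reduce everything to statements about products of minors and their shapes, and then to pass to the convex hull by a standard homogeneity/powers argument.

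\emph{Step 1: The easy inclusion ``$\supseteq$''.} Fix a point $\ab=(a_1,\dots,a_m)\in P_\Sigma$. I would first treat the case where $\ab$ is a vertex, i.e.\ $\ab=(\gamma_1(\sigma),\dots,\gamma_m(\sigma))$ for some $\sigma\in\Sigma$; then $\lceil a_i\rceil=\gamma_i(\sigma)$ and by Theorem~\ref{thm:Dsigmadiamond} the ideal $\bigcap_i I_i^{(\gamma_i(\sigma))}$ is exactly the integral closure of $D_\sigma\subseteq D(\Sigma)$, hence contained in $\overline{D(\Sigma)}$. For a general $\ab\in P_\Sigma$ one writes $\ab=\sum_{j}\lambda_j\ab^{(j)}$ as a convex combination of vertices $\ab^{(j)}$ coming from diagrams $\sigma^{(j)}\in\Sigma$; the subadditivity of the ceiling function together with the fact that $\gamma_i$ is additive on concatenations of shapes lets me build, for a suitable common denominator $N$ with $N\lambda_j\in\NN$, a product of minors of shape the concatenation $\tau=\bigsqcup_j (\sigma^{(j)})^{N\lambda_j}$, which lies in $D(\Sigma)^N$ and has $\gamma_i(\tau)=N a_i$. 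Theorem~\ref{thm:symb} then shows this element of $D(\Sigma)^N$ lies in $\bigcap_i I_i^{(Na_i)}$, and taking $N$-th roots (i.e.\ using that $\bigcap_i I_i^{(\lceil a_i\rceil)}$ is integrally closed and that $x^N\in D(\Sigma)^N$ forces $x\in\overline{D(\Sigma)}$) gives the desired containment after checking $\lceil a_i\rceil$ is the right ceiling. This is bookkeeping, not a real obstacle.

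\emph{Step 2: The hard inclusion ``$\subseteq$''.} Since $S$ is a normal domain in which the ideals $I_t$ are generated by the products of minors of the relevant shapes, $\overline{D(\Sigma)}$ is itself generated by products of minors (this uses that the associated graded ring, or the ASL structure on $K[X]$, makes the integral closure of a shape-defined ideal again shape-defined; cf.\ the discussion preceding Theorem~\ref{thm:symb} and Theorem~\ref{thm:Dsigmadiamond}). So it suffices to show: if $\Pi$ is a product of minors of shape $\tau\in\H_m$ with $\Pi\in\overline{D(\Sigma)}$, then $\tau$ satisfies $\gamma_i(\tau)\ge \gamma_i(\ab)=a_i$ for all $i$, for some $\ab\in P_\Sigma$ — equivalently, the point $(\gamma_1(\tau),\dots,\gamma_m(\tau))$ dominates a point of $P_\Sigma$ coordinatewise, so that $\Pi\in\bigcap_i I_i^{(\lceil a_i\rceil)}$ for that $\ab$. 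By the valuative criterion, $\Pi\in\overline{D(\Sigma)}$ means $\Pi^N\in D(\Sigma)^N$ for some $N$; expanding $D(\Sigma)^N=(\sum_{\sigma\in\Sigma}D_\sigma)^N$ and using that each $D_\sigma D_{\sigma'}\subseteq D_{\sigma\sqcup\sigma'}$, one gets $\Pi^N\in \sum D_{\rho}$ over concatenations $\rho$ of $N$ diagrams from $\Sigma$. Now I invoke Theorem~\ref{thm:Dsigmadiamond} (or directly Theorem~\ref{thm:symb}) on the right-hand side: membership of the product of minors $\Pi^N$ (of shape the $N$-fold "stretch" $N\!\cdot\!\tau$ in the sense of $\gamma$-values) forces $\gamma_i(N\tau)=N\gamma_i(\tau)\ge \gamma_i(\rho)$ for some such $\rho$, and $\frac1N(\gamma_1(\rho),\dots,\gamma_m(\rho))$ is a convex combination of the vertex points $(\gamma_1(\sigma),\dots)$, $\sigma\in\Sigma$, hence lies in $P_\Sigma$. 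Setting $\ab=\frac1N(\gamma_\bullet(\rho))\in P_\Sigma$ gives $\gamma_i(\tau)\ge a_i$, so $\lceil \gamma_i(\tau)\rceil \ge \lceil a_i\rceil$ and $\Pi\in I_i^{(\lceil a_i\rceil)}$ for each $i$ by Theorem~\ref{thm:symb}. That places $\Pi$ in the right-hand side of the asserted formula.

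\emph{Main obstacle.} The crux is Step~2, specifically the passage ``$\Pi\in\overline{D(\Sigma)}$ $\Rightarrow$ $\Pi$ is dominated, in $\gamma$-coordinates, by a rational point of $P_\Sigma$''. Two things must be handled with care: first, that it genuinely suffices to test integral closure on products of minors (i.e.\ $\overline{D(\Sigma)}$ has a generating set of products of minors), which rests on the ASL/straightening machinery for $K[X]$ rather than on anything soft; and second, the combinatorial identity that multiplying products of minors concatenates shapes \emph{up to} the relation that only $\gamma$-values matter, so that the expansion of the $N$-th power interacts correctly with Theorems~\ref{thm:symb} and \ref{thm:Dsigmadiamond}. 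Once these are in place, the convexity bookkeeping — common denominators, ceilings, and the identification of $\tfrac1N(\gamma_\bullet(\rho))$ with a point of the convex hull $P_\Sigma$ — is routine. I would also remark that the statement is really a ``$P_\Sigma$ controls $\overline{D(\Sigma)^s}$ for all $s$'' statement in disguise (replace $\Sigma$ by its $s$-fold concatenations, whose $\gamma$-polytope is $sP_\Sigma$), which is the form in which it will be used later in the paper.
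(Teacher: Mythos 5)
Your Step 1 is essentially the correct (and standard) argument for the inclusion ``$\supseteq$'', modulo tidying: for $\ab=\sum_j\lambda_j\ab^{(j)}$ with all $N\lambda_j\in\NN$ and $\tau$ the corresponding concatenation, an element $x\in\bigcap_i I_i^{(\lceil a_i\rceil)}$ satisfies $x^N\in\bigcap_i I_i^{(Na_i)}=\overline{D_\tau}\subseteq\overline{D(\Sigma)^N}$ by Theorem~\ref{thm:Dsigmadiamond}, hence $x\in\overline{D(\Sigma)}$. Be aware, though, that the paper gives no argument at all for this theorem: it cites \cite[Theorem 6.1]{DEP} for characteristic $0$ and asserts that the argument of \cite[Theorem 1.3]{Bruns} carries over in general, so all the substance sits in the standard-monomial machinery of those references.

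Step 2 has a genuine gap, in two places. First, ``$\Pi\in\overline{D(\Sigma)}$ means $\Pi^N\in D(\Sigma)^N$ for some $N$'' is not the valuative criterion and is false in general: already for $I=(x^2,y^2)$ and $f=x^2+xy$ one has $f\in\overline{I}$ but $f^N\notin I^N$ for every $N$ (in characteristic $0$). The correct statement is that some fixed $c\neq 0$ satisfies $c\,\Pi^N\in D(\Sigma)^N$ for all $N\gg 0$, and the factor $c$ ruins your shape bookkeeping because $c\,\Pi^N$ is no longer a product of minors. Second, the two assertions you park under ``ASL machinery'' --- that $\overline{D(\Sigma)}$ is generated by products of minors, and that a product of minors lying in the \emph{sum} $\sum_\rho D_\rho$ must have $\gamma$-vector dominating that of a \emph{single} $\rho$ --- are precisely the content of the cited theorems, not background one may quote: a sum of ideals contains many elements belonging to no single summand, and ruling this out for products of minors requires the basis of standard bitableaux, the fact that straightening does not decrease the $\gamma$-functions, and, in positive characteristic, Bruns's replacement for the representation-theoretic argument of \cite{DEP}. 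The route of the references for this direction is not to test individual elements of $\overline{D(\Sigma)}$ but to show that the right-hand side is spanned by the standard bitableaux it contains and is integrally closed; combined with your Step 1 and the trivial inclusion of $D(\Sigma)$ in the right-hand side, that forces equality. As written, your Step 2 assumes the hardest ingredient of the conclusion.
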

\begin{proof}
In characteristic 0 this has already been proved in \cite[Theorems 6.1]{DEP}. In general, the same argument used in the proof of \cite[Theorem 1.3]{Bruns} works as well as in that case.
\end{proof}

\begin{remark}
Notice that, to form the ideals $D(\Sigma)$, the set $\Sigma$ can be taken finite. Thus, in Theorem \ref{thm:icsumsofprod}, we can always let $P_{\Sigma}$ being a polytope. The analog remark holds for Theorems \ref{thm:icsumsofprodsym} and \ref{thm:icsumsofprodpf} below.
\end{remark}

When $\chara(K)=0$, the ideals $I(\Sigma)$ and $D(\Sigma)$ are related by the following:

\begin{thm}\label{thm:relinvdet}\cite[Theorems 8.1 and 8.2]{DEP}.
If $\chara(K)=0$, for any diagram $\sigma\in\P_m$ we have 
\[\overline{I_{\sigma}}=D_{\tl\sigma}.\]
In general, if $\Sigma\subseteq \P_m$, then $\overline{I(\Sigma)}=\overline{D(\tl\Sigma)}$,
where $\tl\Sigma=\{\tl\sigma:\sigma\in\Sigma\}$.
\end{thm}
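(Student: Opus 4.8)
The plan is to reduce everything to a single Young diagram and then sandwich $\overline{I_\sigma}$ between $D_{\tl\sigma}$ and $\overline{D_{\tl\sigma}}$. First, the general statement follows formally from the one-diagram case: since $S$ is Noetherian we may assume $\Sigma$ finite (cf.\ the Remark after Theorem~\ref{thm:icsumsofprod}), and for ideals $A,B$ one has $\overline{A+B}=\overline{\overline{A}+\overline{B}}$ (because $A+B\subseteq\overline{A}+\overline{B}\subseteq\overline{A+B}$); iterating, $\overline{I(\Sigma)}=\overline{\sum_{\sigma\in\Sigma}I_\sigma}=\overline{\sum_{\sigma\in\Sigma}\overline{I_\sigma}}$, so once $\overline{I_\sigma}=D_{\tl\sigma}$ is known for each diagram we obtain $\overline{I(\Sigma)}=\overline{\sum_{\sigma\in\Sigma}D_{\tl\sigma}}=\overline{D(\tl\Sigma)}$ from the definition $D(\tl\Sigma)=\sum_{\sigma\in\Sigma}D_{\tl\sigma}$. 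So fix $\sigma\in\P_m$; it remains to prove $\overline{I_\sigma}=D_{\tl\sigma}$.

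For the inclusion $\overline{I_\sigma}\subseteq D_{\tl\sigma}$ I would pass through symbolic powers. Write $M_\tau=S_\tau V\otimes S_\tau W^*$, so that by the Cauchy formula $I_\sigma=\bigoplus_{\tau\supseteq\sigma}M_\tau$; each $M_\tau$ is a cyclic $G$-module, generated by a highest weight vector which under the identification $S=K[X]$ may be taken to be $\delta_1\cdots\delta_{\tau_1}$, where $\delta_j$ is the minor of $X$ on its first $\tl\tau_j$ rows and first $\tl\tau_j$ columns --- a product of minors of shape $\tl\tau$. If $\tau\supseteq\sigma$ then $\tl\tau\supseteq\tl\sigma$, and since each $\gamma_t$ is non-decreasing for the inclusion order on diagrams (clear from \eqref{eq:gammafunctions}) we get $\gamma_t(\tl\tau)\ge\gamma_t(\tl\sigma)$ for all $t\le m$. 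Hence, by Theorem~\ref{thm:symb}, $\delta_1\cdots\delta_{\tau_1}\in I_t^{(\gamma_t(\tl\sigma))}$, and as this ideal is $G$-stable (a symbolic power of the $G$-stable prime $I_t$) it contains all of $M_\tau$; thus $I_\sigma\subseteq\bigcap_{t=1}^m I_t^{(\gamma_t(\tl\sigma))}$, which by Theorem~\ref{thm:Dsigmadiamond} equals $\overline{D_{\tl\sigma}}$. Taking integral closures gives $\overline{I_\sigma}\subseteq\overline{D_{\tl\sigma}}$, and since $D_{\tl\sigma}$ is integrally closed in characteristic $0$ (De Concini--Eisenbud--Procesi, \cite{DEP}) we conclude $\overline{I_\sigma}\subseteq D_{\tl\sigma}$.

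The reverse inclusion $D_{\tl\sigma}\subseteq\overline{I_\sigma}$ is the heart of the matter, and is where I expect the real difficulty. Since $D_{\tl\sigma}=I_{\tl\sigma_1}\cdots I_{\tl\sigma_{\sigma_1}}$ is generated by the products of minors of shape $\tl\sigma$, it suffices to show each such product $\Pi$ is integral over $I_\sigma$. One product of minors of shape $\tl\sigma$ --- the highest weight vector above --- already lies in $I_\sigma$, whereas a general $\Pi$ need not (that $I_\sigma\subsetneq D_{\tl\sigma}$ is exactly why integral closure is needed); so the task is to show that all products of minors of a fixed shape agree modulo integral closure. I would attempt this either (a) by degenerating to the monomial ideal $\init(I_\sigma)$ with respect to a diagonal term order --- the integral closure of a monomial ideal being governed by a Newton polytope, the same polytopal mechanism behind Theorem~\ref{thm:icsumsofprod} --- and then transferring back, or (b) by exhibiting an explicit integral dependence, namely expressing a power $\Pi^N$ as an element of $(I_\sigma)^N$; this comes down to expanding products of minors in the standard-bitableau basis and checking, via the inequalities $\gamma_t(\tl\tau)\ge\gamma_t(\tl\sigma)$, that the shapes which occur are all accounted for. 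Making either route rigorous --- guaranteeing that the degeneration in (a) respects integral closure, or completing the combinatorial bookkeeping in (b) that matches products of minors of the ``large'' shape against repeated copies of the shape $\tl\sigma$ --- is the main obstacle, and it is precisely the standard-monomial and representation-theoretic analysis of \cite{DEP}. Once it is in place, $D_{\tl\sigma}\subseteq\overline{I_\sigma}\subseteq\overline{D_{\tl\sigma}}=D_{\tl\sigma}$, so all the inclusions are equalities and $\overline{I_\sigma}=D_{\tl\sigma}$.
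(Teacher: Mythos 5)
The paper does not prove this statement at all: it is imported wholesale from \cite{DEP} (Theorems 8.1 and 8.2 there), so there is no internal argument to compare yours against. Judged on its own terms, your proposal is incomplete. The reduction to a single diagram via $\overline{A+B}=\overline{\overline{A}+\overline{B}}$ is correct, and the inclusion $\overline{I_\sigma}\subseteq D_{\tl\sigma}$ is essentially right: the highest weight vector of $S_\tau V\otimes S_\tau W^*$ is indeed a product of (leading principal) minors of shape $\tl\tau$, the $\gamma_t$ are monotone for $\subseteq$, the symbolic powers $I_t^{(s)}$ are $G$-stable, and Theorems \ref{thm:symb} and \ref{thm:Dsigmadiamond} then give $I_\sigma\subseteq\bigcap_t I_t^{(\gamma_t(\tl\sigma))}=\overline{D_{\tl\sigma}}$. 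Note, however, that your last step invokes the integral closedness of $D_{\tl\sigma}$ in characteristic $0$, which is itself one of the main theorems of \cite{DEP} and of essentially the same depth as the statement being proved; without it you only get $\overline{I_\sigma}\subseteq\overline{D_{\tl\sigma}}$.

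The genuine gap is the reverse inclusion $D_{\tl\sigma}\subseteq\overline{I_\sigma}$, which you yourself flag as unfinished. Neither of your two sketched routes is carried out: in (a) you would need to control how initial ideals interact with integral closure (the containments between $\init(\overline{I})$ and $\overline{\init(I)}$ do not come for free, and transferring a polytopal description of $\overline{\init(I_\sigma)}$ back to $\overline{I_\sigma}$ is exactly the delicate point), and in (b) the ``combinatorial bookkeeping'' of expressing a power of an arbitrary product of minors of shape $\tl\sigma$ as an element integral over $I_\sigma$ is precisely the straightening-law analysis of \cite[Sections 6--8]{DEP}. Deferring the key step to the very reference the theorem cites means the proposal does not constitute an independent proof; as the paper treats this as a quoted black box, the honest conclusion is that your attempt verifies one direction and correctly locates, but does not close, the hard direction.
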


\subsection{Representation theory and commutative algebra in $\Sym(\Sym^2V)$}\label{sec:symmetric}

Let $n$ be a positive integer, $V$ be a $K$-vector space of dimension $n$ and $S=\Sym(\Sym^2V)$. Let $\Rc_e$ be the set of diagrams $\sigma=(\sigma_1,\ldots ,\sigma_k)$ with $\sigma_i$ even for all $i=1,\ldots ,k$. Dually, $\Cc_e$ will be the set of diagrams $\sigma$ such that $\tl\sigma\in \Rc_e$. The general linear group $\GL(V)$ acts naturally on $S$ and, if $\chara(K)=0$,
\[S=\bigoplus_{\sigma\in \P_n\cap \Rc_e}S_{\sigma}V\]
is the decomposition of $S$ in irreducible $\GL(V)$-representations (cf. \cite[Proposition 2.3.8 (a)]{We}). Let us assume for a moment that $\chara(K)=0$. Under such an assumption, the $\GL(V)$-invariant ideals of $S$ were described by Abeasis in \cite{Ab}: They are the $\GL(V)$-subrepresentations of $S$ of the form
\[\bigoplus_{\sigma\in\Sigma}S_{\sigma}V,\]
where $\Sigma\subseteq \P_n\cap \Rc_e$ is such that 
$\tau\in\Sigma$ whenever there is $\sigma\in \Sigma$ with $\sigma\subseteq \tau$.
For a diagram $\sigma\in\P_n\cap \Rc_e$, we will denote the ideal generated by the irreducible $\GL(V)$-representation $S_{\sigma}V$ by $J_{\sigma}$. More generally, for any set $\Sigma\subseteq \P_n\cap \Rc_e$ we set: 
\[J(\Sigma)=\sum_{\sigma\in\Sigma}J_{\sigma}.\]

A characteristic-free approach to the study of commutative algebra in $S$ is, again, provided by standard monomial theory: The ring $S$ can be seen as the polynomial ring $K[Y]$ whose variables are the entries of a $n\times n$-symmetric-matrix $Y$. A distinguished ideal of such a ring is the ideal $J_{t}$ generated by the $t$-minors of $Y$, where $t\leq n$. In characteristic 0, $J_{t}$ coincides with the ideal $J_{(2^{t})}$. Other interesting ideals of $S$ are
\begin{equation}\label{eq:Esigma}
E_{\sigma}=J_{\sigma_1}J_{\sigma_2}\cdots J_{\sigma_k},
\end{equation}
where $\sigma=(\sigma_1,\ldots ,\sigma_k)\in\H_{n}$, and more generally their sums $E(\Sigma)=\sum_{\sigma\in\Sigma}E_{\sigma}$, where $\Sigma\subseteq \H_{n}$. For a product $\Pi=\delta_1\cdots \delta_k\in S$ where $\delta_i$ is a $\sigma_i$-minor of $Y$ and $\sigma_1\geq \sigma_2\geq\ldots \geq \sigma_k\geq 1$, again we refer to the diagram $\sigma=(\sigma_1,\ldots ,\sigma_k)$ as the shape of $\Pi$. 

\begin{thm} \label{thm:symbsym}\cite[Teorema 5.1]{Ab}
For any $t\leq n$ and $s\in\NN$, the symbolic power $J_{t}^{(s)}$ is generated by the products of minors whose shapes $\sigma$ satisfy
\[\gamma_t(\sigma)\geq s.\]
\end{thm}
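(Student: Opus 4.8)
The statement to prove is Theorem~\ref{thm:symbsym}, describing the symbolic powers $J_t^{(s)}$ of the determinantal ideal of a generic symmetric matrix $Y$ in terms of shapes of products of minors satisfying $\gamma_t(\sigma)\geq s$. Although Abeasis' original proof is cited, the paper presents this result freshly, so I would want a proof that runs in parallel with the generic matrix case (Theorem~\ref{thm:symb}) but using the standard monomial theory / ASL structure specific to the coordinate ring $K[Y]$ of symmetric matrices.

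\medskip

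\noindent\textbf{Plan of proof.} The plan is to exploit that $K[Y]$ is an algebra with straightening law on the poset of (diagonal-anchored) minors of the symmetric matrix, so that every element has a unique expression as a $K$-linear combination of standard monomials, i.e. products of minors $\delta_1\cdots\delta_k$ whose index sets form a chain. First I would reduce to showing the two containments separately. For the easy containment, I would show that any product of minors $\Pi$ of shape $\sigma$ with $\gamma_t(\sigma)\geq s$ lies in $J_t^{(s)}$: localizing at a minimal prime of $J_t$ (which, as for generic matrices, is $J_t$ itself since $J_t$ is prime of the expected height), the order of vanishing of a single $\sigma_i$-minor along $J_t$ is exactly $\max\{0,\sigma_i-t+1\}$, so $\Pi$ vanishes to order $\sum_i\max\{0,\sigma_i-t+1\}=\gamma_t(\sigma)\geq s$, giving $\Pi\in J_t^{(s)}$. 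This requires knowing the local behaviour of minors, which follows from the fact that after inverting a suitable $(t-1)$-minor the symmetric matrix can be put (via symmetric row/column operations) in a normal form with an $(n-t+1)\times(n-t+1)$ generic symmetric block, reducing to the residual symmetric determinantal ideal.

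\medskip

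\noindent\textbf{The hard direction.} The reverse containment — every element of $J_t^{(s)}$ is a $K$-combination of products of minors whose shapes satisfy $\gamma_t\geq s$ — is the real content. Here I would argue with standard monomials: write an arbitrary $f\in J_t^{(s)}$ in its straightening expansion $f=\sum c_\mu M_\mu$ where each $M_\mu$ is a standard monomial, i.e. a product of minors along a chain, with some shape $\sigma(\mu)$. The claim becomes that every standard monomial $M_\mu$ appearing with nonzero coefficient already satisfies $\gamma_t(\sigma(\mu))\geq s$. The standard tool is a filtration argument: the ideal generated by products of minors of shapes with $\gamma_t\geq s$ is itself spanned by standard monomials of such shapes (one shows the straightening relations preserve the condition $\gamma_t\geq s$, because straightening a product of two incomparable minors produces products of minors whose shapes dominate the original shape in the relevant partial order, and $\gamma_t$ is monotone under this domination). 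Granting that, $J_t^{(s)}$ and the candidate ideal become comparable $\ZZ$-graded (indeed multi-graded) vector spaces spanned by standard monomials, and one checks they have the same standard monomials by the localization/order-of-vanishing computation above: a standard monomial lies in $J_t^{(s)}$ iff it vanishes to order $\geq s$ along the prime $J_t$ iff $\gamma_t(\sigma)\geq s$.

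\medskip

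\noindent\textbf{Main obstacle.} The step I expect to be most delicate is verifying that the straightening relations in $K[Y]$ respect the filtration by $\gamma_t$ — i.e. that when one straightens a non-standard product, every monomial that appears has shape with $\gamma_t$ at least as large. For the generic matrix this is classical (it underlies the ASL structure of $K[X]$ and the DEP results), but the symmetric case has its own straightening law with slightly different shape bookkeeping, and one must check that the partial order on shapes controlled by the $\gamma$-functions is compatible with it. Once this combinatorial-cum-ASL fact is in place, the rest is the localization computation plus a dimension/spanning count, both of which are routine. An alternative, if one wants to avoid re-deriving the symmetric straightening law, is simply to cite \cite[Teorema 5.1]{Ab} as done, and regard the above as the conceptual skeleton of that argument; but a self-contained treatment would proceed exactly along these lines, mirroring the proof of Theorem~\ref{thm:symb} for the generic case.
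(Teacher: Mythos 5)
The paper does not actually prove this statement: it is quoted from Abeasis \cite{Ab}, whose argument works in characteristic $0$ via the decomposition of $\Sym(\Sym^2V)$ into Schur modules --- $J_t^{(s)}$ is a $\GL(V)$-stable ideal, hence a direct sum of $S_\sigma V$'s, and one identifies which $\sigma$ occur. Your sketch proposes instead the standard-monomial route parallel to Theorem \ref{thm:symb}. That is a legitimate, genuinely different strategy, and arguably the more useful one here, since the paper invokes Theorem \ref{thm:symbsym} in positive characteristic (Example \ref{ex:symmetric+}) where Abeasis's representation-theoretic proof does not directly apply. Your easy direction --- inverting a principal $(t-1)$-minor to reduce to a smaller generic symmetric block and reading off that a $k$-minor vanishes to order exactly $\max\{0,k-t+1\}$ along the prime $J_t$ --- is correct as stated.

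The hard direction, however, has its entire content concentrated in the two facts you defer. First, $K[Y]$ is not an ASL on the poset of minors of $Y$ in the naive sense: the standard monomial theory for symmetric matrices is the doset algebra of De Concini--Procesi, where standardness is a condition on pairs of chains of index sets, so even the statement ``write $f$ as a combination of standard monomials, each a product of minors along a chain'' must be reformulated before the filtration argument can start. Second, the assertion that straightening preserves the condition $\gamma_t\geq s$ --- equivalently, that the span of standard monomials of shape $\sigma$ with $\gamma_t(\sigma)\geq s$ is an ideal, and that a standard monomial lies in $J_t^{(s)}$ if and only if $\gamma_t(\sigma)\geq s$, with no cancellation among standard monomials in the associated graded ring of $S_{J_t}$ --- is essentially equivalent to the theorem itself; labelling it ``the delicate step'' and granting it makes the argument circular in practice. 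To complete your route you would need the symmetric analogue of the filtration arguments used for generic matrices in \cite{DEP} and \cite{Bruns}; such results exist in the literature but must be invoked or reproved, and until then the honest course is the one the paper takes, namely citing \cite{Ab}.
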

%
%

\begin{thm}\label{thm:icsumsofprodsym}
For a subset $\Sigma\subseteq \H_{n}$, the integral closure of $E(\Sigma)$ is equal to
\[\sum_{\ab=(a_1,\ldots ,a_n)\in P_{\Sigma}}\left(\bigcap_{i=1}^nJ_{i}^{(\lceil a_i\rceil)}\right).\]
\end{thm}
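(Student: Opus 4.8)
\textbf{Proof proposal for Theorem \ref{thm:icsumsofprodsym}.}

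The plan is to mimic the proof of Theorem \ref{thm:icsumsofprod} (the generic case), since the combinatorial backbone is identical: the symbolic powers $J_t^{(s)}$ of the determinantal ideals of the symmetric matrix $Y$ are described by the very same $\gamma$-functions (Theorem \ref{thm:symbsym}), so the only thing one really has to check is that the two features used in \cite[Theorem 1.3]{Bruns}—and recycled in Theorem \ref{thm:icsumsofprod}—also hold for $K[Y]$. First I would record the symmetric analog of Theorem \ref{thm:Dsigmadiamond}, namely that for a diagram $\sigma\in\H_n$ one has $\overline{E_\sigma}=\bigcap_{i=1}^n J_i^{(\gamma_i(\sigma))}$; this follows from Theorem \ref{thm:symbsym} exactly as in the generic case, using that $K[Y]$ is an ASL on the poset of minors of $Y$ and that products of minors form a $K$-basis, so that integral dependence of a product of minors $\Pi$ on $E_\sigma$ is detected shape-by-shape.

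With that in hand, the inclusion $\supseteq$ in the statement is the easy direction: for each lattice-roundup point $\lceil\ab\rceil$ with $\ab\in P_\Sigma$, write $\ab$ as a convex combination $\sum_j \lambda_j (\gamma_1(\sigma^{(j)}),\dots,\gamma_n(\sigma^{(j)}))$ with $\sigma^{(j)}\in\Sigma$; then a standard computation with the $\gamma$-functions (concavity/superadditivity of $t\mapsto\gamma_t$ under the relevant operations on shapes, just as in \cite{DEP} and \cite{Bruns}) shows that a suitable product of the $E_{\sigma^{(j)}}$, i.e. an element of a power of $E(\Sigma)$, lands inside $\bigcap_i J_i^{(\lceil a_i\rceil)}$, whence that intersection is contained in $\overline{E(\Sigma)}$. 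For the reverse inclusion $\subseteq$, one takes a product of minors $\Pi$ of shape $\tau$ that is integral over $E(\Sigma)$; integrality over a sum $\sum_\sigma E_\sigma$ forces, after passing to the Rees-algebra/valuative criterion and using that the normalization is again governed by the $J_i^{(\cdot)}$, the point $(\gamma_1(\tau),\dots,\gamma_n(\tau))$ to lie in the dilation $P_\Sigma$ scaled appropriately, so that $\Pi\in\bigcap_i J_i^{(\lceil a_i\rceil)}$ for the corresponding $\ab\in P_\Sigma$. Since both $\overline{E(\Sigma)}$ and the right-hand sum are generated by products of minors (the latter because each $J_i^{(\cdot)}$ is, by Theorem \ref{thm:symbsym}, and intersections of such are too), checking the containments on products of minors suffices.

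The main obstacle is not conceptual but lies in verifying that the straightening-law machinery over $K[Y]$ behaves well enough: specifically that the relevant symbolic powers $J_i^{(s)}$ are integrally closed and that their formation commutes with the combinatorics of shapes in characteristic-free fashion. In the generic case this is exactly what \cite{Bruns} supplies, and the cited Theorem \ref{thm:symbsym} of Abeasis is the symmetric counterpart of Theorem \ref{thm:symb}; so the proof really is, as for Theorem \ref{thm:icsumsofprod}, a matter of checking that ``the same argument used in the proof of \cite[Theorem 1.3]{Bruns} works as well'', now with $Y$ in place of $X$ and $J$ in place of $I$. I would therefore present the proof as: cite Theorem \ref{thm:symbsym}, note the symmetric analog of Theorem \ref{thm:Dsigmadiamond}, and then run the Bruns-type argument verbatim, pointing out the one or two places where symmetry of $Y$ changes a height count but not the shape combinatorics.
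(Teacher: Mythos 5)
Your proposal follows exactly the route the paper takes: the paper's proof is literally the one-line observation that the characteristic-zero case is Abeasis's Teorema 4.1 and that in general the argument of Bruns's Theorem 1.3 transfers verbatim to $K[Y]$, which is precisely the transfer you carry out (via Theorem \ref{thm:symbsym} and the ASL structure on the minors of $Y$). Your write-up just makes explicit the steps the paper leaves implicit.
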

\begin{proof}
In characteristic 0 this has already been proved in \cite[Teorema 4.1]{Ab}. In general, the same argument used in the proof of \cite[Theorem 1.3]{Bruns} works as well as in that case.
\end{proof}

When $\chara(K)=0$, the ideals $J(\Sigma)$ and $E(\Sigma)$ are related by the following:

\begin{thm}\cite[Teorema 6.1 and comment below]{Ab}.
If $\chara(K)=0$, for any diagram $\sigma\in\P_n\cap \Cc_e$ we have 
\[\overline{J_{\sigma}}=E_{\sigma'},\]
where $\sigma'$ is the diagram with $i$-th entry $\tl\sigma_{2i}$.
In general, if $\Sigma\subseteq \P_n\cap \Rc_e$, then $\overline{J(\Sigma)}=\overline{E(\Sigma')}$,
where $\Sigma'=\{\sigma':\sigma\in\Sigma\}$.
\end{thm}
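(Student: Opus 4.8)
The plan is to prove the statement directly, paralleling the treatment of the generic–matrix case, Theorem~\ref{thm:relinvdet}: one works throughout inside $K[Y]$ and uses the algebra-with-straightening-law structure carried by the minors of the symmetric matrix $Y$ in place of that of a generic matrix. First reduce to a single diagram. Writing $J(\Sigma)=\sum_{\sigma\in\Sigma}J_\sigma$ and $E(\Sigma')=\sum_{\sigma\in\Sigma}E_{\sigma'}$, and using that $\overline{\sum_j\mathfrak a_j}=\overline{\sum_j\overline{\mathfrak a_j}}$ for any family of ideals, the general identity $\overline{J(\Sigma)}=\overline{E(\Sigma')}$ follows from the single-diagram assertion $\overline{J_\sigma}=\overline{E_{\sigma'}}$ (and $\Sigma$ may be taken finite). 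So fix $\sigma\in\P_n\cap\Rc_e$; it remains to prove $\overline{J_\sigma}=E_{\sigma'}$.

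For the inclusion $\overline{J_\sigma}\subseteq\overline{E_{\sigma'}}$ I would exhibit a generator of the irreducible module $S_\sigma V\subseteq K[Y]$ that already lies in $E_{\sigma'}$. Let $\delta_t=\det(y_{ij})_{1\le i,j\le t}$ be the $t$-th leading principal minor of $Y$; it is a semiinvariant, for the unipotent radical of the standard Borel of $\GL(V)$, of weight $2(\mathbf e_1+\cdots+\mathbf e_t)$. Hence the monomial $\prod_{t=1}^{n}\delta_t^{(\sigma_t-\sigma_{t+1})/2}$ --- with $\sigma_{n+1}:=0$, the exponents lying in $\NN$ precisely because $\sigma\in\Rc_e$ --- is a highest-weight vector of weight $\sigma$, that is, a generator of $S_\sigma V$; viewed as a product of minors it has shape exactly the diagram $\sigma'$ with $i$-th part $\tl\sigma_{2i}$, so it belongs to $E_{\sigma'}=J_{\sigma'_1}\cdots J_{\sigma'_k}$. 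Since $E_{\sigma'}$ is $\GL(V)$-stable it then contains the whole of the irreducible $S_\sigma V$, so $J_\sigma\subseteq E_{\sigma'}$ and $\overline{J_\sigma}\subseteq\overline{E_{\sigma'}}$.

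The reverse inclusion, together with the assertion that $E_{\sigma'}$ is itself integrally closed, is the substantive part. By Theorem~\ref{thm:icsumsofprodsym} applied to the one-point polytope $P_{\{\sigma'\}}$ we have $\overline{E_{\sigma'}}=\bigcap_{t=1}^{n}J_t^{(\gamma_t(\sigma'))}$. On the other hand $\overline{J_\sigma}$ is $\GL(V)$-stable and integrally closed, and for each $t$ the order of vanishing $v_t$ along the generic point of the rank-$(t-1)$ locus $V(J_t)$ --- a genuine valuation, the relevant local ring of $K[Y]$ being regular --- is constant on $S_\sigma V\setminus\{0\}$, since $J_t^{(s)}\cap S_\sigma V$ is a subrepresentation of the irreducible $S_\sigma V$. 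By Theorem~\ref{thm:symbsym} (with the straightening law) the value of $v_t$ on a product of minors is the $\gamma_t$ of its shape, so on $S_\sigma V$ it equals $\gamma_t(\sigma')$; and the elementary identity $\gamma_t(\lambda)=\sum_{s\ge t}\tl\lambda_s$ gives $\gamma_t(\sigma')=\tfrac12\sum_{j\ge t}\sigma_j$. Hence $v_t(J_\sigma)=v_t(E_{\sigma'})=\gamma_t(\sigma')$ for all $t$. Granting that the Rees valuations of $J_\sigma$ and of $E_{\sigma'}$ lie among the $v_t$ --- the key output of the symmetric-matrix standard monomial analysis of \cite{Ab}, parallel to \cite{DEP} and \cite{Bruns} --- one concludes $\overline{J_\sigma}=\bigcap_t J_t^{(v_t(J_\sigma))}=\overline{E_{\sigma'}}$; and the same analysis shows that the product $E_{\sigma'}$, being indexed by the transpose shape $\sigma'$, is spanned by its standard monomials and hence already equals $\bigcap_t J_t^{(\gamma_t(\sigma'))}$, so $\overline{J_\sigma}=E_{\sigma'}$.

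I expect the main obstacle to be precisely that last input: showing that $\overline{J_\sigma}$ has no Rees valuations beyond the $v_t$ --- so that the symbolic-power formula of Theorem~\ref{thm:icsumsofprodsym} extends from the products $E_{\sigma}$ to the representation-theoretic ideals $J_\sigma$ --- and that a minor-product indexed by a transpose is integrally closed. Both are consequences of the combinatorics of the straightening law for minors of a symmetric matrix, which is exactly the content of \cite{Ab}; once they are in hand, the reduction to one diagram and the highest-weight-vector computation are formal, and the whole argument transfers mutatis mutandis from the generic case of Theorem~\ref{thm:relinvdet}.
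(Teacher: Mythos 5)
This statement is never proved in the paper: it is quoted directly from Abeasis \cite{Ab} (Teorema 6.1 and the comment following it), so there is no in-paper argument to compare yours against. Judged on its own terms, your proposal settles the easy half and leaves the hard half as an assumption. The reduction to a single diagram via $\overline{\sum_j\mathfrak a_j}=\overline{\sum_j\overline{\mathfrak a_j}}$ is fine, and the inclusion $J_\sigma\subseteq E_{\sigma'}$ is correctly and completely established: $\prod_{t}\delta_t^{(\sigma_t-\sigma_{t+1})/2}$ is indeed a highest weight vector of weight $\sigma$ (note that this requires $\sigma\in\Rc_e$, i.e.\ even rows, which you use implicitly; the ``$\Cc_e$'' in the first sentence of the statement is a misprint), its shape as a product of minors is exactly $\sigma'$ because $\tl{(\sigma')}_j=\sigma_j/2$, and $\GL(V)$-stability of $E_{\sigma'}$ then forces $S_\sigma V\subseteq E_{\sigma'}$.

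The genuine gap is the reverse inclusion, and it is not closable by the valuation-matching you propose. Knowing $v_t(J_\sigma)=v_t(E_{\sigma'})=\gamma_t(\sigma')$ for all $t$ only gives the upper bound $\overline{J_\sigma}\subseteq\bigcap_t J_t^{(\gamma_t(\sigma'))}=\overline{E_{\sigma'}}$, which you already had from $J_\sigma\subseteq E_{\sigma'}$. To reverse it you need (a) that $\overline{J_\sigma}$ is computed by the divisorial valuations $v_1,\ldots,v_n$ alone, i.e.\ equals the intersection of the corresponding symbolic powers, and (b) that $E_{\sigma'}$ itself equals $\bigcap_t J_t^{(\gamma_t(\sigma'))}$ in characteristic $0$ (so that the right-hand side of the asserted equality needs no closure). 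You explicitly ``grant'' both, but these two facts \emph{are} the content of \cite{Ab} (Teoremi 4.1, 5.1, 6.1), proved there by the straightening-law analysis of the $\GL(V)$-decomposition of products of determinantal ideals of a symmetric matrix, in parallel with \cite{DEP} in the generic case. So your write-up correctly locates where the substance lies and verifies the surrounding combinatorics, but as a proof it reduces the cited theorem essentially to itself; to make it self-contained one would have to reproduce the standard-monomial argument of \cite{Ab}, not merely invoke it.
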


\subsection{Representation theory and commutative algebra in $\Sym(\bigwedge^2V)$}\label{sec:skew-symmetric}

Let $n$ be a positive integer, $V$ be a $K$-vector space of dimension $n$ and $S=\Sym(\bigwedge^2V)$. The general linear group $\GL(V)$ acts naturally on $S$ and, if $\chara(K)=0$,
\[S=\bigoplus_{\sigma\in \P_n\cap \Cc_e}S_{\sigma}V\]
is the decomposition of $S$ in irreducible $\GL(V)$-representations (cf. \cite[Proposition 2.3.8 (b)]{We}). Let us assume for a moment that $\chara(K)=0$. Under such an assumption, the $\GL(V)$-invariant ideals of $S$ were described by Abeasis and Del Fra in \cite{AD}: They are the $\GL(V)$-subrepresentations of $S$ of the form
\[\bigoplus_{\sigma\in\Sigma}S_{\sigma}V,\]
where $\Sigma\subseteq \P_n\cap \Cc_e$ is such that 
$\tau\in\Sigma$ whenever there is $\sigma\in \Sigma$ with $\sigma\subseteq \tau$.
For a diagram $\sigma\in\P_n\cap \Cc_e$, we will denote the ideal generated by the irreducible $\GL(V)$-representation $S_{\sigma}V$ by $P_{\sigma}$. More generally, for any set $\Sigma\subseteq \P_n\cap \Cc_e$ we set: 
\[P(\Sigma)=\sum_{\sigma\in\Sigma}P_{\sigma}.\]

A characteristic-free approach to the study of commutative algebra in $S$ is, again, provided by standard monomial theory: The ring $S$ can be seen as the polynomial ring $K[Z]$ whose variables are the entries of a $n\times n$-skew-symmetric-matrix $Z$. A distinguished ideal of such a ring is the ideal $P_{2t}$ generated by the $2t$-Pfaffians of $Z$, where $t\leq \lfloor n/2\rfloor$. In characteristic 0, $P_{2t}$ coincides with the ideal $P_{(1^{2t})}$. Other interesting ideals of $S$ are
\begin{equation}\label{eq:Fsigma}
F_{\sigma}=P_{2\sigma_1}P_{2\sigma_2}\cdots P_{2\sigma_k},
\end{equation}
where $\sigma=(\sigma_1,\ldots ,\sigma_k)\in\H_{\lfloor n/2\rfloor}$, and more generally their sums $F(\Sigma)=\sum_{\sigma\in\Sigma}F_{\sigma}$, where $\Sigma\subseteq \H_{\lfloor n/2\rfloor}$. For a product $\Pi=\delta_1\cdots \delta_k\in S$ where $\delta_i$ is a $2\sigma_i$-Pfaffian of $Z$ and $\sigma_1\geq \sigma_2\geq\ldots \geq \sigma_k\geq 1$, we refer to the diagram $\sigma=(\sigma_1,\ldots ,\sigma_k)$ as the shape of $\Pi$. 

\begin{thm}\label{thm:symbpf}\cite[Theorem 5.1]{AD}
For any $t\leq \lfloor n/2\rfloor$ and $s\in\NN$, the symbolic power $P_{2t}^{(s)}$ is generated by the products of Pfaffians whose shapes $\sigma$ satisfy
\[\gamma_t(\sigma)\geq s.\]
\end{thm}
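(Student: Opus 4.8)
The plan is to mimic the arguments behind Theorems~\ref{thm:symb} and~\ref{thm:symbsym}: exploit the fact that $S=K[Z]$ is an ASL on the poset of Pfaffians of $Z$, so that the products of Pfaffians, once sorted into standard monomials, form a $K$-basis of $S$ adapted to the ideals in sight. Fix $t\le\lfloor n/2\rfloor$, write $\pp=P_{2t}$ (a prime of $S$, of height $\binom{n-2t+2}{2}$), and let $\mathfrak b_s\subseteq S$ be the ideal generated by all products of Pfaffians whose shape $\sigma$ satisfies $\gamma_t(\sigma)\ge s$; the goal is $\mathfrak b_s=\pp^{(s)}$. Since $S$ is a polynomial ring, $S_{\pp}$ is regular, hence its $\pp S_{\pp}$-adic order defines a valuation $v_{\pp}\colon S\setminus\{0\}\to\NN$, and $\pp^{(s)}=\{f:v_{\pp}(f)\ge s\}$. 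So everything comes down to computing $v_{\pp}$ on products of Pfaffians and to showing that $\mathfrak b_s$ has no spurious associated primes.

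First I would show that a $2r$-Pfaffian $\delta$ of $Z$ has $v_{\pp}(\delta)=\max\{0,r-t+1\}$. Choose a $(2t-2)$-Pfaffian $\delta_0\notin\pp$ — one exists, since the generic point of $V(\pp)$ is a skew-symmetric matrix of rank exactly $2t-2$ — so $\delta_0\in S_{\pp}^{\times}$; after permuting coordinates we may take $\delta_0=\Pf(Z_1)$ with $Z_1$ the leading $(2t-2)$-block of $Z$. Over $S[\delta_0^{-1}]$ the change of variables replacing the lower block $Z_2$ of $Z$ by its Pfaffian Schur complement $\widetilde Z_2=Z_2+\tl Z_{12}Z_1^{-1}Z_{12}$ is an automorphism, after which $\pp$ is generated by the entries of the generic skew-symmetric matrix $\widetilde Z_2$ (of size $n-2t+2$), these entries forming a regular system of parameters of $S_{\pp}$; and the Pfaffian Schur-complement identity shows that the $2r$-Pfaffian on rows $\{1,\dots,2t-2\}$ together with $2(r-t+1)$ further rows equals $\Pf(Z_1)$ times a $2(r-t+1)$-Pfaffian of $\widetilde Z_2$. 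The latter is a form of degree $r-t+1$ in those parameters, so its $\pp S_{\pp}$-adic order is exactly $r-t+1$ (the ``$t=1$'' base case); hence this particular $\delta$ has $v_{\pp}(\delta)=r-t+1$, and therefore so does every $2r$-Pfaffian, since coordinate permutations act transitively on them and fix $\pp$ (and $v_{\pp}=0$ when $r<t$). Additivity of $v_{\pp}$ then gives $v_{\pp}(\Pi)=\sum_i\max\{0,\sigma_i-t+1\}=\gamma_t(\sigma)$ for every product of Pfaffians $\Pi$ of shape $\sigma$.

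Granting this, $\mathfrak b_s\subseteq\pp^{(s)}$ is immediate. For the reverse inclusion: a product of $s$ of the $2t$-Pfaffians generating $\pp$ has shape $(t^s)$ and $\gamma_t((t^s))=s$, so $\pp^s\subseteq\mathfrak b_s$; combined with $\mathfrak b_s\subseteq\pp^{(s)}$ and $\pp^{(s)}S_{\pp}=\pp^sS_{\pp}$ (as $\pp^{(s)}=\pp^sS_{\pp}\cap S$ by definition), this yields $\mathfrak b_sS_{\pp}=\pp^sS_{\pp}$. Also $\pp^s\subseteq\mathfrak b_s\subseteq\pp$ forces $\sqrt{\mathfrak b_s}=\pp$, so $\pp$ is the only minimal prime of $\mathfrak b_s$. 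Hence, as soon as one knows that $\mathfrak b_s$ has no embedded primes — i.e.\ that $\mathfrak b_s$ is $\pp$-primary — one concludes $\mathfrak b_s=\mathfrak b_sS_{\pp}\cap S=\pp^sS_{\pp}\cap S=\pp^{(s)}$, as wanted.

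The main obstacle is exactly this: the unmixedness of $\mathfrak b_s$ (equivalently, its $\pp$-primariness), which is the combinatorial heart of the argument of \cite[Theorem 5.1]{AD}. One proves it by returning to the straightening law: show that the standard products of Pfaffians with $\gamma_t(\sigma)\ge s$ form a $K$-basis of $\mathfrak b_s$, and that $S/\mathfrak b_s$ is again an ASL on the corresponding sub-poset, whence Cohen--Macaulay, in particular unmixed. An essentially equivalent route is an induction on $t$ — reducing, through the Schur-complement construction above, to a Pfaffian ideal of a smaller skew-symmetric matrix (ultimately to $P_2$, i.e.\ to a maximal graded ideal) — combined with an induction on $s$ with trivial base case $s=1$; the inductive step again requires controlling how standard monomials multiply, which is the genuinely nontrivial ingredient.
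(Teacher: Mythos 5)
The paper offers no proof of this statement: it is quoted verbatim from Abeasis--Del Fra \cite[Theorem 5.1]{AD}, so there is nothing in the text to compare against except the citation. Your outline is, in substance, the standard argument from that literature. The reduction is sound: since $S_{\pp}$ is regular, the $\pp S_{\pp}$-adic order function $v_{\pp}$ is a valuation and $\pp^{(s)}=\{f:v_{\pp}(f)\ge s\}$; the localization/Schur-complement computation showing $v_{\pp}(\delta)=\max\{0,r-t+1\}$ for a $2r$-Pfaffian $\delta$ is correct (the inversion of a $(2t-2)$-Pfaffian turning $\pp$ into the ideal of entries of a smaller generic skew-symmetric matrix, together with the factorization $\Pf=\Pf(Z_1)\cdot\Pf(\widetilde Z_2)$ and the transitivity of the permutation action, is exactly the classical device); additivity of $v_{\pp}$ then gives $\mathfrak{b}_s\subseteq\pp^{(s)}$, and the final step $\mathfrak{b}_s=\mathfrak{b}_sS_{\pp}\cap S=\pp^{(s)}$ is valid \emph{provided} $\mathfrak{b}_s$ is $\pp$-primary.

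That proviso is the entire content of the theorem, and you have not proved it — you have only named the two standard routes (straightening law making the standard monomials with $\gamma_t(\sigma)\ge s$ a $K$-basis of $\mathfrak{b}_s$, or a double induction via localization). The hard point is that $\gamma_t$ does not obviously behave well under straightening: when a non-standard product of Pfaffians is rewritten as a combination of standard ones, one must check that every standard monomial appearing has $\gamma_t$ at least as large, and conversely that the standard monomials with $\gamma_t(\sigma)\ge s$ really span an ideal with no embedded primes. This is precisely what \cite[Theorem 5.1]{AD} establishes (and what \cite[Theorem 7.1]{DEP} establishes in the generic case), and it cannot be waved through. As a self-contained proof your proposal therefore has a genuine gap; as a reconstruction of the strategy behind the cited result it is accurate, and since the paper itself treats the statement as a quotation rather than something to be proved, deferring to \cite{AD} at exactly this point is also a legitimate way to close the argument.
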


\begin{thm}\label{thm:icsumsofprodpf}
For a subset $\Sigma\subseteq \H_{\lfloor n/2\rfloor}$, the integral closure of $F(\Sigma)$ is equal to
\[\sum_{\ab=(a_1,\ldots ,a_{\lfloor n/2\rfloor})\in P_{\Sigma}}\left(\bigcap_{i=1}^{\lfloor n/2\rfloor}P_{2i}^{(\lceil a_i\rceil)}\right).\]
\end{thm}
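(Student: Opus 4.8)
The plan is to mirror the proof strategy that, by the authors' own admission, already works for Theorem~\ref{thm:icsumsofprod} and Theorem~\ref{thm:icsumsofprodsym}: namely, adapt the argument of \cite[Theorem 1.3]{Bruns}, now in the skew-symmetric/Pfaffian setting, using Theorem~\ref{thm:symbpf} as the replacement for Theorem~\ref{thm:symb}. First I would establish the easy inclusion. Fix $\sigma\in\Sigma$ and a product of Pfaffians $\Pi=\delta_1\cdots\delta_k$ of shape $\sigma$, so $\Pi$ is a generator of $F_\sigma$. For each $i$, the number of factors $\delta_j$ with $2\sigma_j\ge 2i$ is exactly $\tl\sigma_i$, and each such factor lies in $P_{2i}$; hence $\Pi\in P_{2i}^{(\tl\sigma_i)}$ — but more to the point, the standard computation shows $\Pi\in\bigcap_{i=1}^{\lfloor n/2\rfloor}P_{2i}^{(\gamma_i(\sigma))}$, so taking $\ab=(\gamma_1(\sigma),\ldots,\gamma_{\lfloor n/2\rfloor}(\sigma))$, which is a vertex of $P_\Sigma$, gives $F_\sigma\subseteq\bigcap_i P_{2i}^{(\lceil a_i\rceil)}$. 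Summing over $\sigma\in\Sigma$ and noting that the right-hand side is a sum of integrally closed ideals containing $F(\Sigma)$ — each $P_{2i}^{(c)}$ is integrally closed, being a symbolic power of a prime, and a finite intersection of integrally closed ideals is integrally closed, but a \emph{sum} need not be, so one must instead argue that each summand $\bigcap_i P_{2i}^{(\lceil a_i\rceil)}$ for $\ab\in P_\Sigma$ is contained in $\overline{F(\Sigma)}$ — yields one containment.

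For the reverse and genuinely substantive direction, one shows that every summand on the right is integral over $F(\Sigma)$. Fix $\ab=(a_1,\ldots,a_{\lfloor n/2\rfloor})\in P_\Sigma$ and an element $g\in\bigcap_i P_{2i}^{(\lceil a_i\rceil)}$. Because $S=K[Z]$ is an ASL on the poset of Pfaffians and the ideals $P_{2i}^{(c)}$ and their intersections are generated by \emph{standard} products of Pfaffians (this is the content one extracts from Theorem~\ref{thm:symbpf} together with the straightening law, exactly as in the generic and symmetric cases), it suffices to take $g=\Pi$ a standard product of Pfaffians lying in $\bigcap_i P_{2i}^{(\lceil a_i\rceil)}$, i.e. of shape $\tau$ with $\gamma_i(\tau)\ge\lceil a_i\rceil\ge a_i$ for all $i$. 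One then needs to show $\Pi^r\in F(\Sigma)^r$ for some $r$, equivalently that $\Pi\in\overline{F(\Sigma)}$. The mechanism, following \cite[Theorem 1.3]{Bruns}, is: since $\gamma_i(\tau)\ge a_i$ for the point $\ab$ which is a convex combination $\ab=\sum_\sigma\lambda_\sigma\,\mathbf{v}_\sigma$ of the vertices $\mathbf{v}_\sigma=(\gamma_1(\sigma),\ldots)$ with $\sigma\in\Sigma$, one writes a suitable power $\Pi^r$ as a product of Pfaffians whose shape dominates a sum $\sum_\sigma r\lambda_\sigma\,\sigma$ (after clearing denominators so all $r\lambda_\sigma\in\NN$), and then regroups the factors into $r\lambda_\sigma$ copies of shape-$\sigma$ products for each $\sigma$, landing in $\prod_\sigma F_\sigma^{r\lambda_\sigma}\subseteq F(\Sigma)^r$. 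The concavity/superadditivity of the $\gamma_t$-functions on shapes, i.e. $\gamma_t(\tau)\ge\gamma_t(\sigma)$ componentwise is compatible with decomposing $\tau$ into sub-shapes, is what makes this regrouping possible; this is a purely combinatorial fact about diagrams and the $\gamma$-functions defined in \eqref{eq:gammafunctions}, identical to the one used in \cite{Bruns}.

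The one point requiring genuine care — and the main obstacle — is verifying that the ASL/standard-monomial machinery used by Bruns--Conca--Vasconcelos for the generic matrix transfers verbatim to the ring of Pfaffians. Concretely: that $K[Z]$ with the natural poset of Pfaffians is an ASL, that the symbolic powers $P_{2t}^{(s)}$ are \emph{spanned} (not merely generated) by standard products of Pfaffians of the prescribed shapes, and that intersections $\bigcap_i P_{2i}^{(c_i)}$ inherit the same kind of standard-monomial basis. All of this is available in the literature of DeConcini--Procesi and Abeasis--Del Fra — indeed Theorem~\ref{thm:symbpf} is \cite[Theorem 5.1]{AD} — and the authors have already invoked exactly this pattern for the symmetric case in Theorem~\ref{thm:icsumsofprodsym}; so the honest statement of the proof is that it runs along the lines of \cite[Theorem 1.3]{Bruns}, substituting Pfaffians for minors and Theorem~\ref{thm:symbpf} for the Pfaffian analogue of Theorem~\ref{thm:symb}, and I would write it as such rather than reproducing the combinatorics in full. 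A final remark: as in the cited cases, one may take $\Sigma$ finite so that $P_\Sigma$ is a polytope and the sum on the right-hand side is finite.
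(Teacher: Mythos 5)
Your overall strategy coincides with the paper's: the proof given there consists of the two sentences ``In characteristic 0 this has already been proved in \cite[Theorem 4.1]{AD}. In general, similar arguments to those used in the proof of \cite[Theorem 1.3]{Bruns} work,'' and your proposal is a fleshed-out version of exactly that deferral, with the right substitutions (Pfaffians for minors, Theorem \ref{thm:symbpf} in place of Theorem \ref{thm:symb}, standard products of Pfaffians in place of standard products of minors). The two containments you do address --- $F(\Sigma)\subseteq R$ where $R:=\sum_{\ab\in P_\Sigma}\bigl(\bigcap_i P_{2i}^{(\lceil a_i\rceil)}\bigr)$, via Theorem \ref{thm:symbpf} applied to the vertices $(\gamma_1(\sigma),\ldots,\gamma_{\lfloor n/2\rfloor}(\sigma))$, and $R\subseteq\overline{F(\Sigma)}$, via the convex-combination/regrouping argument on shapes --- are sketched correctly and are what one imports from \cite{Bruns} and \cite{AD}.

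There is, however, a logical gap in how the two containments are assembled. What you prove is $F(\Sigma)\subseteq R\subseteq\overline{F(\Sigma)}$, which only yields $\overline{R}=\overline{F(\Sigma)}$; the theorem asserts $R=\overline{F(\Sigma)}$, i.e.\ additionally $\overline{F(\Sigma)}\subseteq R$. You correctly flag that $R$, being a \emph{sum} of integrally closed ideals, need not itself be integrally closed, but you then pivot to proving $R\subseteq\overline{F(\Sigma)}$ and never return to the other direction, which does not follow from anything in your sketch. This is in fact the delicate half of the Bruns/DEP/Abeasis--Del Fra argument: one must show (i) that $\overline{F(\Sigma)}$ is spanned by the standard products of Pfaffians it contains (in characteristic $0$ this comes from $G$-invariance; in general from the ASL filtration by shapes), and (ii) that a standard product of shape $\tau$ can be integral over $F(\Sigma)$ \emph{only if} $(\gamma_1(\tau),\ldots,\gamma_{\lfloor n/2\rfloor}(\tau))$ lies in $P_\Sigma+\RR_{\geq 0}^{\lfloor n/2\rfloor}$ --- the converse of your regrouping step, usually obtained from a valuation or degree count built from the $\gamma_t$-functions. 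Together (i) and (ii) give $\overline{F(\Sigma)}\subseteq R$ and hence the stated equality. You should either supply this half or state explicitly that it, too, is being imported from \cite[Theorem 1.3]{Bruns} and \cite[Theorem 4.1]{AD}; as written, the equality is not established.
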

\begin{proof}
In characteristic 0 this has already been proved in \cite[Theorem 4.1]{AD}. In general, similar arguments to those used in the proof of \cite[Theorem 1.3]{Bruns} work.
\end{proof}

When $\chara(K)=0$, the ideals $P(\Sigma)$ and $F(\Sigma)$ are related by the following:

\begin{thm}\cite[Theorems 6.1 and 6.2]{AD}.
If $\chara(K)=0$, for any diagram $\sigma\in\P_n\cap \Cc_e$ we have 
\[\overline{P_{\sigma}}=D_{\tilde{\sigma}},\]
where by $\tilde{\sigma}$ we mean the diagram with $i$-th entry $\tl\sigma_i/2$.
In general, if $\Sigma\subseteq \P_n\cap \Cc_e$, then $\overline{P(\Sigma)}=\overline{F(\widetilde{\Sigma})}$,
where $\widetilde{\Sigma}=\{\widetilde{\sigma}:\sigma\in\Sigma\}$.
\end{thm}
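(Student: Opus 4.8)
The stated equalities are the skew-symmetric counterpart of Theorem~\ref{thm:relinvdet}, proved in \cite[Theorems 6.1 and 6.2]{AD} by adapting the argument of DeConcini--Eisenbud--Procesi; here is the strategy I would follow. First reduce the general statement to the case of a single diagram. For ideals $A,B$ in a Noetherian ring one always has $\overline{\overline{A}+\overline{B}}=\overline{A+B}$, and iterating this gives $\overline{P(\Sigma)}=\overline{\sum_{\sigma\in\Sigma}\overline{P_\sigma}}$ and $\overline{F(\widetilde{\Sigma})}=\overline{\sum_{\sigma\in\Sigma}\overline{F_{\widetilde{\sigma}}}}$. Thus once $\overline{P_\sigma}=\overline{F_{\widetilde{\sigma}}}$ is known for every $\sigma\in\P_n\cap\Cc_e$, the ``In general'' assertion follows formally; and the sharper single-diagram statement $\overline{P_\sigma}=F_{\widetilde{\sigma}}$ follows once one also checks that $F_{\widetilde{\sigma}}$ is integrally closed in this situation, exactly as happens in Theorem~\ref{thm:relinvdet} for $D_{\tl\sigma}$.

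For a single diagram, I would play the two available descriptions of integral closures against each other. Applying Theorem~\ref{thm:icsumsofprodpf} to $\Sigma=\{\widetilde{\sigma}\}$, whose polytope $P_{\{\widetilde{\sigma}\}}$ is the single lattice point $(\gamma_1(\widetilde{\sigma}),\ldots,\gamma_{\lfloor n/2\rfloor}(\widetilde{\sigma}))$, yields $\overline{F_{\widetilde{\sigma}}}=\bigcap_{i=1}^{\lfloor n/2\rfloor}P_{2i}^{(\gamma_i(\widetilde{\sigma}))}$ (the Pfaffian analogue of Theorem~\ref{thm:Dsigmadiamond}). So it suffices to identify $\overline{P_\sigma}$ with this same intersection of symbolic powers. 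Since $P_\sigma$ is $\GL(V)$-invariant, so is $\overline{P_\sigma}$, hence it is a down-set sum of Schur modules and is determined by which products of Pfaffians it contains; and by Theorem~\ref{thm:symbpf}, the intersection $\bigcap_i P_{2i}^{(\gamma_i(\widetilde{\sigma}))}$ is precisely the ideal generated by those products of Pfaffians whose shape $\tau$ satisfies $\gamma_i(\tau)\geq\gamma_i(\widetilde{\sigma})$ for all $i$. Unwinding the definition of the $\gamma$-functions together with the fact that $\widetilde{\sigma}$ has $i$-th entry $\tl\sigma_i/2$, these inequalities translate into a single containment-type condition on $\tau$ relative to $\sigma$. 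The claim therefore reduces to: a product of Pfaffians of shape $\tau$ lies in $\overline{P_\sigma}$ if and only if $\gamma_i(\tau)\geq\gamma_i(\widetilde{\sigma})$ for all $i$.

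The easy inclusion is ``$\supseteq$'': the product of a Pfaffian of size $2\widetilde{\sigma}_1$, one of size $2\widetilde{\sigma}_2$, and so on, has degree $|\sigma|$, and after straightening in the ASL structure of $K[Z]$ its image lands in the Schur component $S_\sigma V\subseteq P_\sigma$; hence $F_{\widetilde{\sigma}}\subseteq P_\sigma\subseteq\overline{P_\sigma}$, which already forces $\overline{F_{\widetilde{\sigma}}}=\bigcap_i P_{2i}^{(\gamma_i(\widetilde{\sigma}))}\subseteq\overline{P_\sigma}$. The substantive inclusion is ``$\subseteq$'': one must show that every element of $P_\sigma$, expanded in the standard monomial basis of $K[Z]$, is supported on products of Pfaffians whose shapes $\tau$ satisfy $\gamma_i(\tau)\geq\gamma_i(\widetilde{\sigma})$ for all $i$, so that $P_\sigma\subseteq\bigcap_i P_{2i}^{(\gamma_i(\widetilde{\sigma}))}$ and hence $\overline{P_\sigma}\subseteq\bigcap_i P_{2i}^{(\gamma_i(\widetilde{\sigma}))}=\overline{F_{\widetilde{\sigma}}}$. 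Combining the two inclusions gives $\overline{P_\sigma}=\overline{F_{\widetilde{\sigma}}}$, and in fact $\overline{P_\sigma}=\bigcap_i P_{2i}^{(\gamma_i(\widetilde{\sigma}))}=F_{\widetilde{\sigma}}$ once the intersection is seen to coincide with the product.

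The main obstacle is exactly the inclusion $P_\sigma\subseteq\bigcap_i P_{2i}^{(\gamma_i(\widetilde{\sigma}))}$: it requires a good enough grip on the standard monomial basis and the straightening law for products of Pfaffians to control the shapes occurring when an arbitrary element of a Schur-generated ideal is straightened, which is the technical core of \cite{AD} (mirroring \cite{DEP} in the generic case). Everything else — the reduction to a single diagram via $\overline{\overline{A}+\overline{B}}=\overline{A+B}$, the evaluation of the polytope at one lattice point, and the bookkeeping translating $\gamma$-function inequalities for $\widetilde{\sigma}$ into containment of diagrams — is routine.
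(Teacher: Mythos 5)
First, a framing remark: the paper does not prove this statement at all --- it is imported wholesale from \cite[Theorems 6.1 and 6.2]{AD} --- so your decision to defer the straightening-law core to that reference is consistent with the paper's treatment. Your formal reductions are also fine: the identity $\overline{\overline{A}+\overline{B}}=\overline{A+B}$ correctly reduces the general $\Sigma$ to a single diagram, and evaluating Theorem~\ref{thm:icsumsofprodpf} at the one-point polytope correctly gives $\overline{F_{\widetilde{\sigma}}}=\bigcap_i P_{2i}^{(\gamma_i(\widetilde{\sigma}))}$.

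However, your ``easy inclusion'' is false as stated. You claim $F_{\widetilde{\sigma}}\subseteq P_\sigma$ on the grounds that a product of Pfaffians of sizes $2\widetilde{\sigma}_1,2\widetilde{\sigma}_2,\ldots$ straightens into the component $S_\sigma V$. It does not: such a product decomposes into several irreducibles $S_\tau V$, and not every occurring $\tau$ contains $\sigma$. Concretely, take $n\geq 4$ and $\sigma=(2,2)$, so $\widetilde{\sigma}=(1,1)$ and $F_{\widetilde{\sigma}}=P_2^2=\mm^2$, whose degree-two part is $\Sym^2(\bigwedge^2V)=S_{(2,2)}V\oplus\bigwedge^4V$; the summand $\bigwedge^4V$ is spanned by the $4$-Pfaffians, and $(1^4)\not\supseteq(2,2)$, so $F_{(1,1)}\not\subseteq P_{(2,2)}=\bigoplus_{\tau\supseteq(2,2)}S_\tau V$. (The same phenomenon is visible in the generic case: Theorem~\ref{thm:relinvdet} gives $D_{\tl\sigma}=\overline{I_\sigma}$, which strictly contains $I_\sigma$ in general, so $D_{\tl\sigma}\not\subseteq I_\sigma$.) What is true is only that the single product of \emph{nested initial} Pfaffians $[1,\ldots,2\widetilde{\sigma}_1][1,\ldots,2\widetilde{\sigma}_2]\cdots$ is a highest-weight vector of weight $\sigma$ and hence lies in $S_\sigma V$. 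To obtain $\overline{F_{\widetilde{\sigma}}}\subseteq\overline{P_\sigma}$ one must show that every standard monomial of shape $\tau$ with $\gamma_i(\tau)\geq\gamma_i(\widetilde{\sigma})$ for all $i$ is \emph{integral} over $P_\sigma$, which requires exhibiting explicit integral dependence relations (the content of Section 6 of \cite{AD}, parallel to Sections 6--8 of \cite{DEP}) and is every bit as substantive as the inclusion $P_\sigma\subseteq\bigcap_iP_{2i}^{(\gamma_i(\widetilde{\sigma}))}$ that you correctly flag as the technical core. So both inclusions are nontrivial, and the argument you offer for the one you call easy would fail. (A minor point: in the standard grading of $K[Z]$ a $2t$-Pfaffian has degree $t$, so your product has degree $|\sigma|/2$, not $|\sigma|$.)
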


\subsection{$F$-pure threshold and test ideals}

In this subsection $\chara(K)=p>0$. Given an ideal $I=(f_1,\ldots ,f_r)$ of $S$ and a power of $p$, say $q=p^e$, the {\it $q$-th Frobenius power} of $I$ is:
\[I^{[q]}=(f_1^q,\ldots ,f_r^q)=(f^q:f\in I).\] 
Let $\mm$ denote the irrelevant ideal of $S$ and consider a homogeneous ideal $I$. For any $q=p^e$, define the function:
\[\nu_I(q):=\max\{r:I^r\nsubseteq \mm^{[q]}\}.\]
The {\it F-pure threshold} of $I$ (at $\mm$) is defined as
\[\fpt(I):=\lim_{e\to\infty}\frac{\nu_I(q)}{q}.\]
In Proposition \ref{prop:rangefpt}, we will point out a (sharp) range in which $\fpt(I)$ can vary. While the upper bound is well known, the lower bound is less popular. Let $d(I)$ be the largest degree of a minimal generator of $I$. Also, we set
\[\delta(I):=\lim_{k\to\infty}\frac{d(I^k)}{k}.\]
Notice that $\delta(I)\leq d(I)$ and that $\delta(I)=d(I)$ if all the minimal generators of $I$ have degree $d(I)$. The following proof is based on the fact that $\nu_I(q)=\nu_{g(I)}(q)$ for any linear homogeneous change of coordinates $g$ on $S$, because $\mm^{[q]}=(x_1^q,\ldots ,x_N^q)=(g(x_1)^q,\ldots ,g(x_N)^q)$.

\begin{prop}\label{prop:rangefpt}
If $\chara(K)=p>0$, then any homogeneous ideal $I\subseteq S$ satisfies the inequalities:
\[\frac{\height(I)}{\delta(I)}\leq \fpt(I)\leq \height(I).\]
\end{prop}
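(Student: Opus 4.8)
The plan is to handle the two inequalities separately, exploiting the stated invariance $\nu_I(q)=\nu_{g(I)}(q)$ under linear changes of coordinates. For the upper bound $\fpt(I)\le\height(I)$: set $h=\height(I)$ and localize at a minimal prime $\pp$ of $I$ with $\height(\pp)=h$. After a generic linear change of coordinates we may assume that a system of parameters for $S_\pp$ can be extracted from the variables, say $x_1,\dots,x_h$, so that $I\subseteq\Rad((x_1,\dots,x_h))$ up to higher-codimension components, hence $I^{h\cdot\ell}\subseteq$ the ideal $(x_1,\dots,x_h)^{(\text{stuff})}$ which lands inside $\mm^{[q]}$ once the exponent is large compared to $q$; more precisely one shows $I^{r}\subseteq\mm^{[q]}$ whenever $r$ exceeds roughly $h(q-1)$, giving $\nu_I(q)\le h(q-1)$ and so $\fpt(I)\le h$ in the limit. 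This is the ``well known'' half and I expect it to go through by a standard pigeonhole/prime-avoidance argument on the monomials appearing in generators of $I^r$.

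For the lower bound $\fpt(I)\ge\height(I)/\delta(I)$: write $h=\height(I)$ and fix $\epsilon>0$. By definition of $\delta(I)$, for $k\gg0$ every minimal generator of $I^k$ has degree at most $k(\delta(I)+\epsilon)$. The idea is that $I^k$, having height $h$ (height does not change under taking powers), cannot be contained in $\mm^{[q]}$ for $q$ small relative to its generators' degrees: if all generators of $I^k$ have degree $\le D$, then a monomial in $\mm^{[q]}$ has some variable to a power $\ge q$, so total degree $\ge q$, forcing $I^k\subseteq\mm^{[q]}$ to fail whenever $D<q$ — but that naive bound only gives $\fpt\ge 1/\delta(I)$, not $h/\delta(I)$. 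To get the factor $h$ one must use height: pass to a generic linear subspace / quotient by $N-h$ general linear forms to reduce to an $\mm$-primary-like situation where one can iterate. Concretely, I would argue that $I^{r}\not\subseteq\mm^{[q]}$ as long as $r\cdot\delta(I)\lesssim h\cdot q$, by choosing inside $I^r$ a product of $h$ ``independent'' minimal generators after a generic coordinate change so that their product, while of degree $\le r(\delta(I)+\epsilon)$, is not in $\mm^{[q]}$ precisely because it is not in the ideal generated by any $q$-th powers along $h$ independent directions — this is where $\height(I)=h$ enters, guaranteeing $h$ such directions exist. Letting $k,q\to\infty$ with $q\sim k\delta(I)/h$ then yields $\nu_I(q)/q\ge h/\delta(I)-\epsilon'$, and $\epsilon\to0$ finishes it.

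The main obstacle I anticipate is making the lower-bound argument rigorous: one needs a clean statement of the form ``if $I$ has height $h$ then $I^r\not\subseteq\mm^{[q]}$ whenever $r\le\lfloor h(q-1)/\delta(I)\rfloor$'' (or a variant), and the cleanest route is probably to invoke a known multiplicity/colength estimate — for instance, comparing $I^r$ with a reduction or with the integral closure, and using that $\ell(S/\mm^{[q]})=q^N$ together with a bound on $\ell(S/(I^r+\mm^{[q]}))$ coming from $\height(I)=h$. Alternatively, after quotienting by $N-h$ general linear forms $S\to S'=S/(\ell_1,\dots,\ell_{N-h})$, the image of $I$ becomes $\mm'$-primary in the $h$-dimensional ring $S'$, and $\fpt$ can only increase under such a quotient (since $\nu$ is computed against $\mm^{[q]}$ and the $\ell_i$ can be taken among the variables), reducing to the $\mm$-primary case where $\fpt(\text{$\mm'$-primary }J)\ge h/d$ follows from $J^r\supseteq\mm'^{rd/\?}$-type containments — here the bookkeeping with $\delta$ versus $d$ and the direction of the inequality under the quotient is the delicate point, and I would spend most of the effort pinning that down.
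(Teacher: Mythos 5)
Your upper bound is essentially the paper's argument: the pigeonhole principle in the regular local ring $S_{\pp}$ at a minimal prime $\pp$ of maximal height gives $\pp^{(r)}\subseteq\pp^{[q]}\subseteq\mm^{[q]}$ for $r>(q-1)\height(\pp)$, hence $\nu_I(q)\le (q-1)\height(I)$; no change of coordinates is needed there. The lower bound, however, has a genuine gap, and you have correctly located it yourself: neither of your two sketched routes is carried to completion, and the missing ingredient is the same in both. The key input the paper uses is the asymptotic linearity of Castelnuovo--Mumford regularity, $\reg(I^k)=\delta(I)k+\alpha(I)$ for $k\gg 0$ (Cutkosky--Herzog--Trung, Kodiyalam); this is the only mechanism by which $\delta(I)$, an asymptotic invariant of the generating degrees of \emph{all} powers, can enter the estimate. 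The paper then passes to $\gin(I^k)$ (Bayer--Stillman preserves regularity), uses Eisenbud--Reeves--Totaro to get that $\gin(I^k)_{\ge r(k)}$ is stable, and from $x_c^{r(k)}\in\gin(I^k)$ with $c=\height(I)$ extracts the \emph{balanced} monomial $(x_1\cdots x_c)^{\lceil r(k)/c\rceil}\in\gin(I^k)$, whose powers visibly avoid $\mm^{[q]}$ until the exponents reach $q$. That balancing step is exactly what your ``product of $h$ independent minimal generators'' is trying to do, but degree considerations alone cannot do it: an element of $I^r$ of degree close to $hq$ lies outside $\mm^{[q]}$ only if its support contains a monomial with every exponent below $q$, and nothing in your sketch produces such an element.

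Your two fallback routes do not close the gap either as stated. The colength comparison would require a lower bound on $\ell(S/(I^r+\mm^{[q]}))$ in terms of $\delta(I)$, which again amounts to controlling how $I^r$ sits inside powers of primes, i.e., the same missing estimate. The hyperplane-section route is sound in direction (cutting by general linear forms can only decrease $\nu$, so a lower bound for the image ideal suffices, and $\delta$ does not increase), and it reduces to showing $\fpt(J)\ge h/\delta(J)$ for an $\mm$-primary ideal $J$ in $h$ variables; but that case still requires a containment of the form $\mm^{\delta(J)k+O(1)}\subseteq J^k$, which is once more the regularity asymptotics (note $\mm^{\reg(J^k)}\subseteq J^k$ for Artinian quotients, and $\reg(J^k)$ can exceed $d(J^k)$, so one genuinely needs $\reg(J^k)/k\to\delta(J)$, not just the degree bound on generators). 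So the proposal is salvageable along your second route, but only after importing the asymptotic regularity theorem, which is the idea your write-up is missing.
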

\begin{proof}
To show the inequality $\fpt(I)\leq \height(I)$ notice that, by the pigeonhole principle, because $S_{\pp}$ is a regular local ring of dimension $\height (\pp)$ for all $\pp\in\Spec(S)$, for all positive integers $r$ we have 
\[\pp_{\pp}^r\subseteq \pp_{\pp}^{[q]} \mbox{ whenever $q=p^e$ and }r>(q-1)\height (\pp).\]
Intersecting back with $S$, by the flatness of the Frobenius, we get $\pp^{(r)}\subseteq \pp^{[q]}$ whenever $r>(q-1)\height (\pp)$. This gives the desired inequality by taking as $\pp$ a minimal prime of $I$ of the same height of $I$.

For the inequality $\fpt(I)\geq \height(I)/\delta(I)$, recall that, as proved in \cite{CHT} and in \cite{Ko}, there exists $\alpha(I)$ such that
\[\reg(I^k)=\delta(I)\cdot k+\alpha(I) \ \ \ \forall \ k\gg 0.\]
Let us consider the generic initial ideal w.r.t. the degrevlex term order, $\gin(I^k)$. By the main result in \cite{BS}, $\reg(\gin(I^k))=\reg(I^k)$. If $k$ is large enough, then $\gin(I^k)$ is a Borel-fixed ideal of regularity $\delta(I)\cdot k+\alpha(I)=:r(k)$. Therefore, by \cite[Proposition 10]{ERT}
\[\gin(I^k)_{\geq r(k)}\]
is a stable ideal. If $c=\height(I)=\height(\gin(I^k))$, thus $x_c^{r(k)}\in \gin(I^k)_{\geq r(k)}$. By the stability of $\gin(I^k)_{\geq r(k)}$ this implies that
\[u(k):=x_1^{\lceil r(k)/c\rceil}\cdots x_c^{\lceil r(k)/c\rceil}\in \gin(I^k)_{\geq r(k)}\subseteq \gin(I^k).\]
Pick a linear homogeneous change of coordinates $g$ such that $\gin(I^k)=\init(g(I^k))$. In particular for $q=p^e$ we have
\[u(k)^{\big\lceil \frac{q}{\lceil r(k)/c\rceil}\big\rceil -1}\in \init\bigg(g(I)^{k\big( \big\lceil \frac{q}{\lceil r(k)/c\rceil}\big\rceil -1\big)}\bigg)\setminus \mm^{[q]},\]
from which
\[\nu_I(q)=\nu_{g(I)}(q)\geq \frac{kq}{r(k)/c+1}-k.\]
If $q\gg k\gg 0$, the asymptotic of the above quantity is $cq/\delta(I)$, and this lets us conclude.
\end{proof}

\begin{remark}
When $I$ is generated in a single degree, the above lower bound has been shown in \cite[Proprosition 4.2]{TW}. A (more powerful) variant for the log-canonical threshold is in \cite[Theorem 3.4]{dEM}.
\end{remark}

Given any ideal $I\subseteq S$ and $q=p^e$, the {\it $q$-th root of $I$}, denoted by $I^{[1/q]}$, is the smallest ideal $J\subseteq S$ such that $I\subseteq J^{[q]}$. By the flatness of the Frobenius over $S$ the $q$-th root is well defined. Let $I$ be an ideal of $S$ and $\lambda$ be a positive real number. It is easy to see that
\[\bigg(I^{\lceil \lambda p^e\rceil}\bigg)^{[1/p^e]}\subseteq \bigg(I^{\lceil \lambda p^{e+1}\rceil}\bigg)^{[1/p^{e+1}]}.\]
The {\it test ideal} of $I$ with coefficient $\lambda$ is defined as:
\[\tau(\lambda\b I):=\bigcup_{e>0}\bigg(I^{\lceil \lambda p^e\rceil}\bigg)^{[1/p^e]}\underset{e\gg 0}{=}\bigg(I^{\lceil \lambda p^e\rceil}\bigg)^{[1/p^e]}.\]
For any ideal $I\subseteq S$, we can therefore define the $F$-pure threshold (consistently with what we hade done in the homogeneous case) as:
\[\fpt(I)=\min\{\lambda \in\RR_{>0}:\tau(\lambda\b I)\neq S\}.\]

If $\lambda\in\RR_+$ and $I$ is an ideal in a polynomial ring over a field of characteristic 0, denoting by $_p$ the reduction modulo the prime number $p$, Hara and Yoshida proved  in \cite[Theorem 6.8]{Hara-Yoshida} that:
\begin{equation}\label{eq:HY}
\Jc(\lambda\b I)_p=\tau(\lambda\b I_p)
\end{equation}
for all $p\gg 0$ (depending on $\lambda$). In particular,
\[\lim_{p\to\infty}\fpt(I_p)=\lct(I).\]

The following lemma will be useful to the proof of Proposition \ref{prop:generalinclusion}.

\begin{lemma}\label{lem:maxreg}
Let $R$ be a Noetherian commutative ring of positive characteristic, and $I=(r_1,\ldots ,r_s)\subseteq R$ be an ideal. If the local cohomology module $H_I^s(R)$ is not zero, then there exist ideals $J_{\lambda}\supsetneq I$ such that
\[\tau(\lambda\b I)=J_{\lambda}I^{\lfloor \lambda\rfloor -s} \ \ \ \forall \ \lambda \geq s.\]
In particular, if $(R,\mm)$ is a $d$-dimensional regular local ring of positive characteristic, then
\[\tau(\lambda\b \mm)=\mm^{\lfloor \lambda\rfloor +1-d}\]
\end{lemma}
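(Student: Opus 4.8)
The plan is to prove the two claims in order, deriving the second (the regular local ring case) as an immediate special case of the first. For the first claim, the key observation is that since $H_I^s(R)\neq 0$, the Čech complex on $r_1,\dots,r_s$ has nonzero cohomology in top degree $s$, which means that for every power $q=p^e$ the product $r_1^{q-1}\cdots r_s^{q-1}$ (more generally any monomial $r_1^{a_1}\cdots r_s^{a_s}$ with all $a_i$ large) is a nonzerodivisor-type witness in the sense that $I^{[q]}$ does not contain $(r_1\cdots r_s)^{q-1}$; equivalently, the ``socle-type'' element survives. I would first make precise the elementary inclusion
\[
\bigl(I^{\lceil\lambda q\rceil}\bigr)^{[1/q]}\supseteq \bigl(I^{\lceil(\lambda-s)q\rceil}\bigr)^{[1/q]}\cdot I^{?}
\]
Actually the cleaner route: for $\lambda\geq s$ write $\lambda = (\lambda-s)+s$ and use that $I^{\lceil\lambda q\rceil}\supseteq I^{\lceil(\lambda-s)q\rceil+s(q-1)}\cdot(\text{stuff})$ is not quite right either; instead I would set $J_\lambda := \tau((\lambda-\lfloor\lambda\rfloor+s)\bullet I)$ and show $\tau(\lambda\bullet I) = J_\lambda\, I^{\lfloor\lambda\rfloor-s}$ by the standard ``multiplying out by the fractional part'' manipulation: the inclusion $\supseteq$ follows from $\tau((\mu+1)\bullet I)\supseteq \tau(\mu\bullet I)\cdot I$ applied $\lfloor\lambda\rfloor-s$ times, and the inclusion $\subseteq$ needs the hypothesis $H_I^s(R)\neq 0$.

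The role of the hypothesis is exactly to guarantee that $\tau(\mu\bullet I) = \tau((\mu-1)\bullet I)\cdot I$ (rather than merely $\supseteq$) once $\mu\geq s$; this is the non-formal content. I would prove this ``unwinding'' identity for $\mu\geq s$ as follows: the $q$-th root operation satisfies $\bigl(I\cdot K\bigr)^{[1/q]}\subseteq (I^{1/q})\cdot$-type bounds are too weak, so instead I would argue directly that for $q\gg 0$ (with $e$ chosen so the test ideal stabilizes), $\bigl(I^{\lceil\mu q\rceil}\bigr)^{[1/q]} = \bigl(I^{\lceil\mu q\rceil - q}\bigr)^{[1/q]}\cdot I$. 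The containment $\supseteq$ is clear since $\bigl(I^{\lceil\mu q\rceil-q}\bigr)^{[1/q]}\cdot I$ raised to $[q]$ lands inside $I^{\lceil\mu q\rceil - q}\cdot I^{[q]}\subseteq I^{\lceil\mu q\rceil}$ using $I^{[q]}\subseteq I^q$. For $\subseteq$, one uses that $I^{\lceil\mu q\rceil}$, with $\mu\geq s$, can be written (up to the relevant root) using the generators $r_1,\dots,r_s$: every monomial $r_1^{a_1}\cdots r_s^{a_s}$ with $\sum a_i = \lceil\mu q\rceil \geq sq$ has some $a_i\geq q$, so $r_1^{a_1}\cdots r_s^{a_s}\in I^{[q]}\cdot I^{\lceil\mu q\rceil - q}$ — wait, that shows $I^{\lceil\mu q\rceil}\subseteq I^{[q]}\cdot I^{\lceil\mu q\rceil-q}$ only when $\mu\geq s$ fails to account for the generators not being exactly $r_i$; here is where $H^s_I(R)\neq 0$ enters, since it ensures $r_1,\dots,r_s$ is (up to radical / for the root computation) a regular-sequence-like system so that $\bigl(I^{[q]}K\bigr)^{[1/q]} = I\cdot K^{[1/q]}$, i.e. the Frobenius root ``sees'' the factor $I^{[q]}$ cleanly. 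I would cite or reprove the fact that $\bigl((f^q)\cdot K\bigr)^{[1/q]} = f\cdot K^{[1/q]}$ for a nonzerodivisor $f$, and bootstrap from $s=1$ up using that the nonvanishing of $H_I^s$ makes $r_1,\dots,r_s$ behave like a regular sequence for these purposes.

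For the second claim, apply the first with $R=(R,\mm)$ a $d$-dimensional regular local ring, $I=\mm=(x_1,\dots,x_d)$, so $s=d$ and $H_\mm^d(R)\neq 0$ always. Then $\tau(\lambda\bullet\mm) = J_\lambda\,\mm^{\lfloor\lambda\rfloor-d}$ for $\lambda\geq d$, and it remains to identify $J_\lambda$. Since $\mm^{[q]} = (x_1^q,\dots,x_d^q)$ and $\bigl(\mm^{\lceil\mu q\rceil}\bigr)^{[1/q]} = \mm^{\lceil\lceil\mu q\rceil/q\rceil}$ — this last equality is the well-known computation that the $q$-th root of a power of $\mm$ in a regular ring is again a power of $\mm$, obtained by checking which monomials $x^a$ satisfy $x^{qa}\in\mm^{\lceil\mu q\rceil}$, namely $q\sum a_i\geq \lceil\mu q\rceil$, i.e. $\sum a_i\geq \lceil\mu q/q\rceil$ up to the ceiling, which in the limit $e\to\infty$ gives $\sum a_i\geq \lceil\mu\rceil$ — one gets $\tau(\mu\bullet\mm) = \mm^{\lceil\mu\rceil}$ for all $\mu$, and in particular the sharper normalization $\tau(\lambda\bullet\mm) = \mm^{\lfloor\lambda\rfloor+1-d}$ follows by taking $\mu = \lambda - d + $ (fractional part handling), or more directly: $\tau(\lambda\bullet\mm)^{[1/q]}$-stabilized equals $\mm^{r}$ where $r = \lfloor(\lceil\lambda q\rceil - 1)/q\rfloor + 1 - d + d = \dots$; I would just compute $\lceil\lambda q\rceil$ divided by $q$, note $\lim_{e}\lceil\lceil\lambda q\rceil/q\rceil$ needs care since $\lceil\lambda q\rceil/q\to\lambda$ but we want $\lfloor\lambda\rfloor+1-d$, so the precise statement is $\bigl(\mm^{\lceil\lambda q\rceil}\bigr)^{[1/q]} = \mm^{\lceil\lceil\lambda q\rceil/q\rceil - d\,?}$ — actually in a regular local ring $\bigl(\mm^{n}\bigr)^{[1/q]} = \mm^{\lceil n/q\rceil}$ is false; the correct formula is $\bigl(\mm^n\bigr)^{[1/q]} = \mm^{\max(0,\,n - d(q-1))\text{-related}}$, so I would use $\tau(\lambda\bullet\mm) = \mm^{\lfloor\lambda\rfloor+1-d}$ directly from Hara–Yoshida / Howald in char $0$ reduced mod $p$, or reprove it via $\nu_\mm(q) = d(q-1)$ from Proposition \ref{prop:rangefpt}'s argument, giving $\fpt(\mm)=d$ and then the test ideal computation.

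\textbf{Main obstacle.} The crux is the identity $\tau(\mu\bullet I) = \tau((\mu-1)\bullet I)\cdot I$ for $\mu\geq s$, which is precisely where $H_I^s(R)\neq 0$ must be used — without it only ``$\supseteq$'' holds. I expect proving this cleanly to require showing that $r_1,\dots,r_s$ can be treated as a regular sequence when computing Frobenius roots of ideals they generate powers of, i.e. $\bigl(I^{[q]}\cdot K\bigr)^{[1/q]} = I\cdot K^{[1/q]}$, and connecting the nonvanishing of top local cohomology to that colon-ideal behavior. Everything else (the formal telescoping to pull out $I^{\lfloor\lambda\rfloor-s}$, and the explicit regular-local computation) is routine.
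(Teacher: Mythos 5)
There is a genuine gap, and it comes from misplacing where the hypothesis $H_I^s(R)\neq 0$ enters. The identity $\tau(\lambda\bullet I)=I\cdot\tau((\lambda-1)\bullet I)$ for $\lambda\geq s$ is Skoda's theorem for test ideals (\cite[Proposition 2.25]{BMS:MMJ}); it holds unconditionally for any ideal generated by $s$ elements and needs no local cohomology input, and the paper simply cites it. Your attempt to derive it from a purported identity $(I^{[q]}K)^{[1/q]}=I\cdot K^{[1/q]}$ justified by ``$r_1,\dots,r_s$ behaving like a regular sequence because $H^s_I(R)\neq 0$'' is therefore both unnecessary and not a valid use of the hypothesis; the pigeonhole inclusion $I^{\lceil\mu q\rceil}\subseteq I^{[q]}\cdot I^{\lceil\mu q\rceil-q}$ that you write down is correct and is the real engine of Skoda, but it has nothing to do with $H_I^s$.

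What the hypothesis is actually for --- and what your proposal never proves --- is the strict containment $J_\lambda\supsetneq I$. After telescoping Skoda you are left with $J_\lambda$ expressed through $\tau(\mu\bullet I)$ for some $\mu<s$, and you must show this is strictly larger than $I$. The paper does this by translating $H_I^s(R)\neq 0$, via the equivalence of local and \v Cech cohomology on $r_1,\dots,r_s$, into the existence of $a>0$ such that $r^{q-a}\notin I^{[q]}=(r_1^q,\dots,r_s^q)$ for all $q\geq a$, where $r=r_1\cdots r_s$; since $r^{q-a}\in I^{s(q-a)}\subseteq I^{\lceil\mu q\rceil}$ for $q\gg 0$ when $\mu<s$, this gives $I^{\lceil\mu q\rceil}\not\subseteq I^{[q]}$ and hence $\tau(\mu\bullet I)\not\subseteq I$. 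This step is entirely absent from your write-up. Finally, the regular local case does not require the separate computations of $(\mm^n)^{[1/q]}$ that you sketch and partly retract, nor an appeal to Hara--Yoshida: it is an immediate consequence of the first part, since $\mm$ is generated by $d$ elements, $H^d_{\mm}(R)\neq 0$ always holds, and $J_\lambda\supsetneq\mm$ forces $J_\lambda=R$.
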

\begin{proof}
Skoda's theorem (cf. \cite[Proposition 2.25]{BMS:MMJ}) implies that, whenever $\lambda \geq s$, 
\[\tau(\lambda\b I)=I\cdot\tau((\lambda-1)\b I).\]
So it is enough to show that $\tau(\lambda\b I)\supsetneq I$ whenever $\lambda <s$. To see this, let us set $r:=r_1\cdots r_s$. By the equivalence between local and \u Cech cohomology, it is not difficult to see that $H_I^s(R)\neq 0$ if and only if there exists $a>0$ such that $r^{q-a}\notin (r_1^q,\ldots ,r_s^q)$ for any $q\geq a$. So, if $q$ is a power of the characteristic of $R$,
\[r^{q-a}\in I^{s(q-a)}\setminus I^{[q]}\ \ \ \forall \ q\geq a,\]
which implies that $\tau(\lambda\b I)\supsetneq I$ whenever $\lambda <s$.
\end{proof}
%

\section{Floating test ideals}\label{sec:floating}

Let $K$ be a field, and $S=K[x_1,\ldots ,x_N]$ be the polynomial ring in $N$ variables over $K$. For an ideal $I\subseteq S$ and a prime ideal $\pp\subseteq S$, we define the function $f_{I:\pp}:\ZZ_{>0}\longrightarrow \ZZ_{>0}$ as:
\[f_{I;\pp}(s)=\max\{\ell:I^s\subseteq \pp^{(\ell)}\} \ \ \ \forall \ s\in\ZZ_{>0}.\]

\begin{lemma}
The function above is linear. That is, $f_{I;\pp}(s)=f_{I;\pp}(1)\cdot s$ for any positive integer $s$.
\end{lemma}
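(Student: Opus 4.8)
The plan is to establish the linearity $f_{I;\pp}(s) = f_{I;\pp}(1)\cdot s$ in two inequalities, the easy direction being superadditivity and the harder one being subadditivity (which will actually require a more careful argument, probably via a limit/normalization).

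First, I would set $a := f_{I;\pp}(1)$, so that $I \subseteq \pp^{(a)}$ but $I \not\subseteq \pp^{(a+1)}$. For the lower bound $f_{I;\pp}(s) \geq a\cdot s$: since $I \subseteq \pp^{(a)}$ and symbolic powers satisfy $\pp^{(a)}\cdot\pp^{(b)} \subseteq \pp^{(a+b)}$ (this is immediate from the definition $\pp^{(k)} = \pp^k S_\pp \cap S$ together with $\pp^a S_\pp \cdot \pp^b S_\pp = \pp^{a+b}S_\pp$), we get $I^s \subseteq (\pp^{(a)})^s \subseteq \pp^{(as)}$, hence $f_{I;\pp}(s) \geq as$. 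So $f_{I;\pp}$ is superadditive: $f_{I;\pp}(s+t) \geq f_{I;\pp}(s) + f_{I;\pp}(t)$, which in particular gives $f_{I;\pp}(s)\geq a s$ directly.

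The main obstacle is the reverse inequality $f_{I;\pp}(s) \leq a s$, i.e. showing that $I^s \not\subseteq \pp^{(as+1)}$. The cleanest route is to localize at $\pp$: the function $f_{I;\pp}$ depends only on $I S_\pp$ inside the regular local ring $(S_\pp, \pp S_\pp)$, since $I^s \subseteq \pp^{(\ell)}$ if and only if $I^s S_\pp \subseteq \pp^\ell S_\pp$ (the "only if" is clear; the "if" uses that $\pp^{(\ell)}$ is the contraction of $\pp^\ell S_\pp$). So after localizing we may assume $(S,\pp)$ is a regular local ring and $f_{I;\pp}(s) = \max\{\ell : I^s \subseteq \pp^\ell\}$ is just the $\pp$-adic order $\ord_\pp(I^s)$ of the ideal $I^s$, i.e. $\min\{\ord_\pp(f) : f \in I^s\}$. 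Now the key input is that the $\pp$-adic valuation $\ord_\pp$ is a genuine valuation on the regular local ring $S_\pp$ (equivalently, the associated graded ring $\gr_\pp(S_\pp)$ is a polynomial ring, hence a domain): this gives $\ord_\pp(fg) = \ord_\pp(f) + \ord_\pp(g)$ for all $f,g$. Picking $f_1, \dots, f_r$ generators of $I$ with $\min_i \ord_\pp(f_i) = a$, any element of $I^s$ is a sum of products of $s$ of the $f_i$'s times ring elements; the product with all factors of order exactly $a$ has order exactly $as$ and (again because $\gr_\pp$ is a domain) no cancellation in the sum can push the total order above $as$ for the minimal-order term, so $\ord_\pp(I^s) = as$. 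This finishes the proof; combined with the lower bound we obtain $f_{I;\pp}(s) = a s = f_{I;\pp}(1)\cdot s$.

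I would remark that the content is really just: $\ord_\pp$ is a valuation and valuations are additive, so $\ord_\pp(I^s) = s\cdot\ord_\pp(I)$, and the passage between $\pp^{(\ell)}$ and $\pp^\ell S_\pp$ is the only thing that needs the (standard) comparison of symbolic powers with localized powers in a Noetherian ring. One subtlety to state carefully is that $f_{I;\pp}$ is finite-valued, i.e. $I^s$ is not contained in $\bigcap_\ell \pp^{(\ell)} = (0)$ for $I \neq 0$ — which is exactly why we need $\gr_\pp$ to be a domain (so that $\ord_\pp$ takes finite values on nonzero elements), and this is automatic since $S_\pp$ is regular. If $I = 0$ the statement is vacuous or one adopts the convention $f_{0;\pp} \equiv +\infty$, still linear.
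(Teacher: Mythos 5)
Your proof is correct and follows essentially the same route as the paper: reduce to the localization $S_\pp$, get the inequality $f_{I;\pp}(s)\geq f_{I;\pp}(1)\cdot s$ from $I_\pp\subseteq\pp_\pp^{a}$, and get the reverse inequality from the fact that $\gr_{\pp_\pp}(S_\pp)$ is a polynomial ring, so the order function is additive and an element $x\in I_\pp$ of order exactly $a$ yields $x^s\in I_\pp^s$ of order exactly $as$. The only superfluous part is your ``no cancellation in the sum'' remark: you do not need to control the order of a general element of $I^s$, only to exhibit the single witness $x^s\notin\pp_\pp^{as+1}$.
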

\begin{proof}
By definition of symbolic power, $I^s\subseteq \pp^{(\ell)} \iff I_{\pp}^s\subseteq \pp_{\pp}^{\ell}$ in $S_{\pp}$. Obviously, $I_{\pp}\subseteq \pp_{\pp}^{\ell}$ implies that $I_{\pp}^s\subseteq \pp_{\pp}^{s\ell}$, which yields $f_{I;\pp}(s)\geq f_{I;\pp}(1)\cdot s$. For the other inequality, take $x\in I_{\pp}\setminus \pp_{\pp}^{\ell +1}$. Then $\overline{x}$ is a nonzero element of degree $\ell$ in $R=\mathrm{gr}_{\pp_{\pp}}(S_{\pp})$. Since $S$ is regular, $R$ is a polynomial ring. In particular it is reduced, thus $\overline{x}^s$ is a nonzero element of degree $\ell s$ in $R$. So $x^s\in I_{\pp}^s\setminus \pp_\pp^{\ell s+1}$, which implies $f_{I;\pp}(s)\leq f_{I;\pp}(1)\cdot s$.
\end{proof}

From now on, for an ideal $I\subseteq S$ and a prime ideal $\pp\subseteq S$, we introduce the notation:
\begin{equation}\label{eq:epi}
e_{\pp}(I):=f_{I;\pp}(1)=\max\{\ell:I\subseteq \pp^{(\ell)}\}.
\end{equation}

\begin{prop}\label{prop:generalinclusion}
If $K$ has positive characteristic and $I\subseteq S$ is an ideal, then
\[\tau(\lambda\b I)\subseteq\bigcap_{\substack{\pp\in\Spec(S) \\ 
\pp\supseteq I}}\pp^{(\lfloor \lambda e_{\pp}(I)\rfloor +1-\height(\pp))} \ \ \ \forall \ \lambda\in \RR_{>0}.\]
\end{prop}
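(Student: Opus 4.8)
The plan is to bound $\tau(\lambda\bullet I)$ by a single symbolic power at a time: fix $\pp \in \Spec(S)$ with $\pp \supseteq I$, and show $\tau(\lambda\bullet I) \subseteq \pp^{(\lfloor \lambda e_{\pp}(I)\rfloor + 1 - \height(\pp))}$; intersecting over all such $\pp$ then gives the statement. So from now on fix such a $\pp$, set $e = e_{\pp}(I)$ and $c = \height(\pp)$, and localize at $\pp$. Since symbolic powers are detected after localization, it suffices to prove the inclusion $\tau(\lambda\bullet I)_{\pp} \subseteq \pp_{\pp}^{\lfloor \lambda e\rfloor + 1 - c}$ in the regular local ring $(S_{\pp}, \pp_{\pp})$, which has dimension $c$.

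The key point is that $e_{\pp}(I) = \max\{\ell : I \subseteq \pp^{(\ell)}\}$ means $I_{\pp} \subseteq \pp_{\pp}^{e}$; hence for every $e' = p^{e}$ (abusing notation, say $q = p^{e'}$) we have $I_{\pp}^{\lceil \lambda q\rceil} \subseteq \pp_{\pp}^{e\lceil \lambda q\rceil}$. The test ideal behaves well under such containments: $\tau(\lambda\bullet I) = (I^{\lceil \lambda q\rceil})^{[1/q]}$ for $q \gg 0$, and $(-)^{[1/q]}$ is inclusion-preserving and commutes with localization (by flatness of Frobenius, which is recorded in the excerpt), so $\tau(\lambda\bullet I)_{\pp} \subseteq (\pp_{\pp}^{e\lceil \lambda q\rceil})^{[1/q]} = \tau(\lambda e \bullet \pp_{\pp})$ once $q$ is large enough that both test-ideal stabilizations hold; more precisely one uses that $(\pp_{\pp}^{\lceil (\lambda e) q\rceil})^{[1/q]} \subseteq (\pp_{\pp}^{e\lceil \lambda q\rceil})^{[1/q]}$ since $\lceil (\lambda e) q \rceil \le e \lceil \lambda q\rceil$, and that the left side computes $\tau((\lambda e)\bullet \pp_{\pp})$ for $q \gg 0$. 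It remains to invoke Lemma \ref{lem:maxreg}, which computes the test ideal of the maximal ideal of a $c$-dimensional regular local ring: $\tau(\mu \bullet \mm) = \mm^{\lfloor \mu\rfloor + 1 - c}$ (with the convention that negative exponents give the unit ideal). Applying this with $\mu = \lambda e$ gives $\tau(\lambda\bullet I)_{\pp} \subseteq \pp_{\pp}^{\lfloor \lambda e\rfloor + 1 - c}$, as desired.

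The main obstacle, and the place requiring the most care, is the interchange of the $q$-th root operation with localization together with the fact that the "$e \gg 0$" stabilization of the test ideal can be taken uniformly — one needs a single $q$ that works simultaneously for $\tau(\lambda\bullet I)$ in $S$ (equivalently in $S_{\pp}$) and for $\tau((\lambda e)\bullet \pp_{\pp})$ in $S_{\pp}$. This is harmless because both are realized by an eventually-constant ascending union, so any sufficiently large $q$ serves both; I would spell this out by writing $\tau(\lambda\bullet I)_{\pp} = \bigcup_{q}(I_{\pp}^{\lceil \lambda q\rceil})^{[1/q]}$ and pushing the containment through term by term before taking the union, so that no single choice of $q$ is actually needed. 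The only other subtlety is the degenerate case $\lfloor \lambda e\rfloor + 1 - c \le 0$, where $\pp^{(\cdot)}$ is interpreted as $S$ and the inclusion is vacuous; Lemma \ref{lem:maxreg} already handles this since then $\tau((\lambda e)\bullet \mm) = S$.
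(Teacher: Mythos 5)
Your argument is correct and essentially identical to the paper's: both localize at $\pp$, use $I_{\pp}\subseteq \pp_{\pp}^{e_{\pp}(I)}$, and conclude with Lemma \ref{lem:maxreg} applied to the maximal ideal of the regular local ring $S_{\pp}$ (the paper merely phrases the reduction as $I^{\lceil \lambda q\rceil}\subseteq \big(\pp^{(\lfloor \lambda e_{\pp}(I)\rfloor +1-\height(\pp))}\big)^{[q]}$ for $q\gg 0$ and contracts back to $S$ using flatness of Frobenius, rather than commuting $(-)^{[1/q]}$ with localization as you do). One direction slip to fix: from $\lceil (\lambda e) q\rceil \leq e\lceil \lambda q\rceil$ you get $\pp_{\pp}^{e\lceil \lambda q\rceil}\subseteq \pp_{\pp}^{\lceil (\lambda e) q\rceil}$ and hence $\big(\pp_{\pp}^{e\lceil \lambda q\rceil}\big)^{[1/q]}\subseteq \big(\pp_{\pp}^{\lceil (\lambda e) q\rceil}\big)^{[1/q]}$ --- which is the containment your chain actually needs --- not the reverse inclusion as written.
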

\begin{proof}
Let us fix $\lambda\in\RR_{>0}$. For any prime ideal $\pp\supseteq I$, we need to show that 
\[I^{\lceil \lambda q\rceil} \subseteq \bigg(\pp^{(\lfloor \lambda e_{\pp}(I)\rfloor +1-\height(\pp))}\bigg)^{[q]} \ \ \ \mbox{for } \ q\gg 0,\]
where $q$ is a power of $\chara(K)=p$. To see this, let us take $q=p^e$ and start with the inclusion:
\[I^{\lceil \lambda q\rceil} \subseteq \pp^{(\lceil \lambda q\rceil e_{\pp}(I))}.\]
By localizing at $\pp$, we have
\[(I^{\lceil \lambda q\rceil})S_{\pp} \subseteq (\pp S_{\pp})^{\lceil \lambda q\rceil e_{\pp}(I)}.\]
Because $S_{\pp}$ is a regular local ring of dimension $\height(\pp)$, by using Lemma \ref{lem:maxreg} we infer that
\begin{eqnarray*}
(\pp S_{\pp})^{\lceil \lambda q\rceil e_{\pp}(I)} & \subseteq & (\pp S_{\pp})^{\lceil \lambda e_{\pp}(I)q\rceil} \\ 
& \underset{q\gg 0}{\subseteq} & \bigg((\pp S_{\pp})^{\lfloor \lambda e_{\pp}(I)\rfloor +1-\height(\pp)}\bigg)^{[q]}
\end{eqnarray*}
So, when $q\gg 0$ we obtain that:
\[(I^{\lceil \lambda q\rceil})S_{\pp} \subseteq \bigg((\pp S_{\pp})^{\lfloor \lambda e_{\pp}(I)\rfloor +1-\height(\pp)}\bigg)^{[q]}.\]
By the flatness of the Frobenius over $S$, by intersecting back with $S$ we get:
\[I^{\lceil \lambda q\rceil} \subseteq \bigg(\pp^{(\lfloor \lambda e_{\pp}(I)\rfloor +1-\height(\pp))}\bigg)^{[q]},\]
which is what we wanted.
\end{proof}

\begin{definition}\label{def:floating}
We will say that an ideal $I\subseteq S$ has {\it floating test ideals} if the inclusion in Proposition \ref{prop:generalinclusion} is an equality for all $\lambda\in \RR_{>0}$.
\end{definition}

Below, we will introduce a class of ideals with floating test ideals. Such ideals have properties quite combinatorial in nature: as we will see, in the class lie all the ideals $D_{\sigma}$, $E_{\sigma}$ and $F_{\sigma}$ introduced in Section \ref{sec:basics}. Before stating the definition, let us observe that, if the inclusion
\[I^s\subseteq \bigcap_{\substack{\pp\in\Spec(S) \\ 
\pp\supseteq I}}\pp^{(f_{I;\pp}(s))}\]
happens to be an equality, then $I^s$ must be integrally closed: indeed, symbolic powers of prime ideals in a regular ring are integrally closed, and the intersection of integrally closed ideals is obviously integrally closed. Furthermore, recall that Ratliff proved in \cite[Theorems (2.4)]{Ra} that 
\[\Ass\big(\overline{I^s}\big)\subseteq \Ass\big(\overline{I^{s+1}}\big) \ \ \ \forall \ s\in\ZZ_{>0},\]
and in \cite[Theorems (2.7)]{Ra} that 
\[\bigg|\bigcup_{s\in \ZZ_{>0}}\Ass\big(\overline{I^s}\big)\bigg|<+\infty.\]
Let us denote by $\StAss(I)=\bigcup_{s\in \ZZ_{>0}}\Ass\big(\overline{I^s}\big)$ and introduce the following central definition:

\begin{definition}[{\bf Condition ($\diamond$)}]\label{def:diamond}
An ideal $I\subseteq S$ satisfies condition ($\diamond$) if, for any $s\gg 0$, there exists a primary decomposition of $\overline{I^s}$ consisting of symbolic powers of the prime ideals in $\StAss(I)$. In other words, there exist functions $g_{I;\pp}:\NN \rightarrow\NN$ such that:
\begin{equation}\label{eq:diamond}
\overline{I^s}=\bigcap_{\pp\in\StAss(I)}\pp^{(g_{I;\pp}(s))} \ \ \ \forall \ s\gg 0.
\end{equation}
\end{definition}

The functions $g_{I;\pp}$ may not be linear, however the next lemma shows that such a failure is paltry enough.

\begin{lemma}
Let $I\subseteq S$ be an ideal satisfying condition ($\diamond$) generated by $\mu$ elements. Then, for all $\pp\in \StAss(I)$, there exist a function $r_{I;\pp}:\NN\rightarrow \NN$ such that $0\leq r_{I,\pp}(s)\leq e_{\pp}(I)(\mu -1)$ and
\begin{equation}\label{eq:diamondlinear}
\overline{I^s}=\bigcap_{\pp\in\StAss(I)}\pp^{(e_{\pp}(I)s-r_{I;\pp}(s))} \ \ \ \forall \ s\gg 0,
\end{equation}
where the $e_{\pp}(I)$'s have been defined in \eqref{eq:epi}.
\end{lemma}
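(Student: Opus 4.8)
The plan is to extract the linear part $e_{\pp}(I)s$ of the exponent function $g_{I;\pp}(s)$ from condition ($\diamond$) and to bound the deviation $r_{I;\pp}(s) := e_{\pp}(I)s - g_{I;\pp}(s)$ from both sides. First I would observe that, for $s \gg 0$, equation \eqref{eq:diamond} forces $g_{I;\pp}(s) = f_{I;\pp}(s)$: by the previous lemma $f_{I;\pp}(s) = e_{\pp}(I)\cdot s$, and since $\overline{I^s} \subseteq I^s \subseteq \pp^{(f_{I;\pp}(s))}$ we get $g_{I;\pp}(s) \geq e_{\pp}(I)s$ cannot happen unless... wait — actually the correct reading is $\overline{I^s} = \bigcap \pp^{(g_{I;\pp}(s))}$, and since $\overline{I^s} \subseteq \pp^{(g_{I;\pp}(s))}$ with $\pp$ a component, one has $g_{I;\pp}(s) = \max\{\ell : \overline{I^s} \subseteq \pp^{(\ell)}\} \geq f_{I;\pp}(s) = e_{\pp}(I)s$ because $I^s \subseteq \overline{I^s}$ gives $f_{I;\pp}(s) \le g_{I;\pp}(s)$. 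So $r_{I;\pp}(s) \le 0$ would follow — meaning I should instead set things up so that $r_{I;\pp}(s) \geq 0$ automatically; the subtlety is which of $\overline{I^s}$, $I^s$ sits where, so I must be careful about the direction of the inequality and may need $g_{I;\pp}(s) \le e_{\pp}(I)s$, which comes from $\overline{I^s} \supseteq \overline{I}^s \cdots$. The cleanest route is: $r_{I;\pp}(s) \geq 0$ because $\overline{I^s} \subseteq I^s \cdot(\text{something})$... I will instead argue via $\overline{I^{s+t}} \supseteq \overline{I^s}\cdot\overline{I^t}$ together with $\overline{I} \supseteq I$.

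More concretely, the argument I would run is: pick $s_0$ large enough that \eqref{eq:diamond} holds for all $s \geq s_0$, and also so that $\overline{I^{s}}$ for $s\in[s_0,2s_0)$ is reached. For the upper bound $r_{I;\pp}(s) \le e_{\pp}(I)(\mu-1)$ I would use that $I$ is generated by $\mu$ elements, so $\overline{I^s} \subseteq \overline{I}^{\,s-\mu+1}\cdot(\text{stuff})$ — more precisely, by the determinantal/Rees-valuation structure, a standard reduction-number argument gives $\overline{I^{s}} = I \cdot \overline{I^{s-1}}$ for $s$ large is false in general, but $\overline{I^{s}} \subseteq I^{s-\mu+1}\overline{I^{\mu-1}}$ does hold (this is the content behind Briançon–Skoda-type statements for $\mu$-generated ideals, or can be seen from $\overline{I^s}\subseteq (I^{s-\mu+1})\cdot \overline{I^{\mu-1}}$ via integral dependence on a reduction generated by $\le\mu$ elements). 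Applying $e_{\pp}(-)$ and using superadditivity of $\pp$-adic symbolic order, $g_{I;\pp}(s) \geq (s-\mu+1)e_{\pp}(I) + g_{I;\pp}(\mu-1) \geq (s-\mu+1)e_{\pp}(I)$, giving $r_{I;\pp}(s) = e_{\pp}(I)s - g_{I;\pp}(s) \leq e_{\pp}(I)(\mu-1)$, as claimed.

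For the lower bound $r_{I;\pp}(s) \geq 0$, I would use that $I^s \subseteq \overline{I^s}$, hence $g_{I;\pp}(s) = \max\{\ell: \overline{I^s}\subseteq \pp^{(\ell)}\} \ge f_{I;\pp}(s) = e_{\pp}(I) s$ — wait, this gives $g \ge e_\pp(I)s$, i.e. $r \le 0$, the wrong direction. So the definition in \eqref{eq:diamondlinear} must have $r$ subtracted because in fact $\overline{I^s}\subseteq\pp^{(g_{I;\pp}(s))}$ forces $g_{I;\pp}(s)\le f_{\overline{I};\pp}(s)$, and the point is $f_{\overline{I^s};\pp} \ge f_{I^s;\pp}$, again wrong direction. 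I think the resolution is that the exponent appearing is exactly $g_{I;\pp}(s)$ and one shows $e_{\pp}(I)s \ge g_{I;\pp}(s)$: indeed $\pp^{(g_{I;\pp}(s))}\supseteq \overline{I^s}\supseteq I^s$, and if also we knew $\pp^{(e_\pp(I)s)} = \overline{I^s}$ would hold for the ``extremal'' component... The honest statement is $g_{I;\pp}(s)\le e_\pp(I)\cdot s$ because $\overline{I^{s}}$ is a subideal of $\overline{I}^{s}$ composed... hmm, actually $\overline{I^s}\supseteq \overline{I}^{s}$ is the wrong inclusion too. I would resolve this by noting $\overline{I^s}\subseteq\overline{I^{s-1}}\cdot\overline{I}$ fails but $\overline{I^s} = \overline{\overline{I^{s-1}}\cdot I}\subseteq\overline{\overline{I^{s-1}}\cdot\overline{I}}$, so $e_\pp(\overline{I^s})\ge e_\pp(\overline{I^{s-1}}) + e_\pp(\overline{I})$, i.e. $g_{I;\pp}$ is superadditive-ish; combined with $g_{I;\pp}(s)\le$ linear growth from the upper bound, $r_{I;\pp}(s)\ge 0$ will follow by choosing the linear coefficient to be $e_\pp(I) = \lim g_{I;\pp}(s)/s$. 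The genuinely delicate step — the main obstacle — is pinning down that the asymptotic slope of $g_{I;\pp}$ equals $e_{\pp}(I) = e_\pp(\overline I) $ (not merely $\le$), which requires knowing that the relevant Rees valuation of $I$ centered at $\pp$ has the ``expected'' value; for the determinantal families this follows from Theorems \ref{thm:Dsigmadiamond}, \ref{thm:icsumsofprod}, \ref{thm:icsumsofprodsym}, \ref{thm:icsumsofprodpf}, but in the abstract setting of condition ($\diamond$) I would need to argue it from linearity of $f_{I;\pp}$ together with the sandwich $I^s\subseteq\overline{I^s}$ and the $\mu$-generated bound above, which together force $g_{I;\pp}(s)/s\to e_\pp(I)$ and $0 \le e_\pp(I)s - g_{I;\pp}(s)\le e_\pp(I)(\mu-1)$.
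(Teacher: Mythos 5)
Your second paragraph is essentially the paper's argument for one half of the statement: Brian\c con--Skoda gives $\overline{I^{s}}\subseteq I^{s-\mu+1}$ for $s\geq\mu$, hence $g_{I;\pp}(s)\geq f_{I;\pp}(s-\mu+1)=e_{\pp}(I)(s-\mu+1)$, which is exactly the bound $r_{I;\pp}(s)\leq e_{\pp}(I)(\mu-1)$. That part is fine and matches the paper.

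The genuine gap is the other half, $r_{I;\pp}(s)\geq 0$, i.e.\ $g_{I;\pp}(s)\leq e_{\pp}(I)s$, where you have the elementary containment reversal backwards and never recover. From $I^s\subseteq\overline{I^s}$ it does \emph{not} follow that $f_{I;\pp}(s)\leq g_{I;\pp}(s)$: if $\overline{I^s}\subseteq\pp^{(\ell)}$ then a fortiori $I^s\subseteq\pp^{(\ell)}$, so the set of exponents $\ell$ over which $g_{I;\pp}(s)$ is a maximum is \emph{contained} in the corresponding set for $f_{I;\pp}(s)$, giving $g_{I;\pp}(s)\leq f_{I;\pp}(s)=e_{\pp}(I)s$ --- a larger ideal has smaller $\pp$-symbolic order. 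This one line is all that is needed, and it is the first displayed inequality in the paper's proof. Because you read the implication the wrong way round (three separate times, each time flagging the ``wrong direction'' without locating the error), you convince yourself that there is a ``genuinely delicate step'' about the asymptotic slope of $g_{I;\pp}$ and about Rees valuations centered at $\pp$; there is no such step. Once the sandwich $e_{\pp}(I)(s-\mu+1)\leq g_{I;\pp}(s)\leq e_{\pp}(I)s$ is in place, setting $r_{I;\pp}(s):=e_{\pp}(I)s-g_{I;\pp}(s)$ finishes the proof, and none of the superadditivity or valuation-theoretic considerations in your first and third paragraphs is needed.
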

\begin{proof}
For all positive integer $s$, we have
\[g_{I;\pp}(S)=\max\{\ell:\overline{I^s}\subseteq \pp^{(\ell)}\}\leq f_{I;\pp}(s)=e_{\pp}(I)s.\]
On the other hand, Brian\c con-Skoda theorem implies that
\[\overline{I^{s+\mu -1}}\subseteq I^s \ \ \ \forall s\in\ZZ_{>0}.\]
Therefore: 
\[g_{I;\pp}(s)\geq f_{I;\pp}(s-\mu +1)=e_{\pp}(I)s-e_{\pp}(I)(\mu -1) \forall \ s\geq \mu.\] 
The existence of $r_{I;\pp}$ follows at once.
\end{proof}

Let us give some examples of ideals satisfying condition ($\diamond$).

\begin{example}\label{ex:ci}
Any prime ideal $\pp\subseteq S$ which is a complete intersection, satisfies condition ($\diamond$). Indeed, since $S/\pp^s$ is Cohen-Macaulay for all $s>0$, $\pp^s=\pp^{(s)}$ in this case.
\end{example}

\begin{example}\label{ex:generic} The ideals $D_{\sigma}$ defined in \eqref{eq:Dsigma} satisfy condition ($\diamond$): indeed, Theorem \ref{thm:Dsigmadiamond} implies that
\[\overline{D_{\sigma}^s}=\bigcap_{i=1}^{m}I_{i}^{(s\gamma_i(\sigma))}.\]
\end{example}

\begin{example}\label{ex:symmetric} The ideals $E_{\sigma}$ defined in \eqref{eq:Esigma} satisfy condition ($\diamond$): indeed, in such a case $\Sigma = \{\sigma\}$, therefore Theorem \ref{thm:icsumsofprodsym} implies that
\[\overline{E_{\sigma}^s}=\bigcap_{i=1}^{n}J_{i}^{(s\gamma_i(\sigma))}.\]
\end{example}

\begin{example}\label{ex:skew-symmetric} The ideals $F_{\sigma}$ defined in \eqref{eq:Fsigma} satisfy condition ($\diamond$): indeed, once again $\Sigma = \{\sigma\}$, therefore Theorem \ref{thm:icsumsofprodpf} implies that
\[\overline{F_{\sigma}^s}=\bigcap_{i=1}^{\lfloor n/2\rfloor}P_{2i}^{(s\gamma_i(\sigma))}.\]
\end{example}

Condition ($\diamond$) alone is not enough to guarantee the equality in Proposition \ref{prop:generalinclusion}, as it is evident from Example \ref{ex:ci}. So, we introduce another central definition:

\begin{definition}[{\bf Condition ($\diamond +$)}]\label{def:diamond+}
An ideal $I\subseteq S$ satisfies condition ($\diamond +$) if it satisfies condition ($\diamond$) and there exists a term ordering $\prec$ on $S$ and a polynomial $F\in S$ such that:
\begin{itemize}
\item[(i)] $\init_{\prec}(F)$ is a square-free monomial;
\item[(ii)] $F\in \pp^{(\height(\pp))}$ for all $\pp\in\StAss(I)$.
\end{itemize}
\end{definition}

Before proving the next result, let us see that the ideals in Examples \ref{ex:generic}, \ref{ex:symmetric} and \ref{ex:skew-symmetric} satisfy condition ($\diamond +$). 

\vspace{3mm}

\begin{example} \label{ex:generic+} Let us consider $\Delta$ to be the product of all principal minors of $X$:
\begin{equation}\label{eq:deltagen}
\Delta := \prod_{i=0}^{n-m}[1,\ldots , m|i+1,\ldots ,i+m]\cdot \prod_{i=1}^{m-1}[i+1,\ldots , m|1,\ldots ,m-i][1,\ldots ,m-i|i+n-m,\ldots ,n].
\end{equation}
By considering the lexicographical term ordering $\prec$ extending the linear order
\[x_{11}>x_{12}>\ldots x_{1n}>x_{21}>\ldots >x_{2n}>\ldots >x_{mn},\]
we have that 
\[\init_{\prec}(\Delta)=\prod_{\substack{ i\in\{1,\ldots ,m\} \\ j\in\{1,\ldots ,n\}}}x_{ij}\] 
is a square-free monomial. Let $\tau$ be the shape of $\Delta$, namely $\tau=(m^{n-m+1},(m-1)^2,\ldots ,1^2)$ and notice that, for all $t\in\{1,\ldots ,m\}$,
\[\gamma_t(\tau)=(n-m+1)(m-t+1)+2\sum_{j=1}^{m-t}j=(n-m+1)(m-t+1)+(m-t)(m-t+1)=(n-t+1)(m-t+1).\]
Since $\height(I_t)=(n-t+1)(m-t+1)$, by Theorem \ref{thm:symb} $\Delta\in I_t^{(\height(I_t))}$ for all $t\in\{1,\ldots ,m\}$. By exploiting Example \ref{ex:generic}, thus, the ideals $D_{\sigma}$ introduced in \eqref{eq:Dsigma} satisfy condition ($\diamond +$).
\end{example}

\begin{example} \label{ex:symmetric+} Let us consider $\Delta$ to be the product of all principal upper diagonal minors of $Y$:
\[\Delta := \prod_{i=1}^{n}[1,\ldots , n-i+1|i,\ldots ,n].\]
By considering the lexicographical term ordering $\prec$ extending the linear order
\[y_{11}>y_{12}>\ldots y_{1n}>y_{22}>\ldots >y_{2n}>\ldots >y_{nn},\]
we have that 
\[\init_{\prec}(\Delta)=\prod_{1\leq i\leq j \leq n}y_{ij}\] 
is a square-free monomial. Let $\tau$ be the shape of $\Delta$, namely $\tau=(n,n-1,\ldots ,2,1)$, and notice that, for all $t\in\{1,\ldots ,n\}$,
\[\gamma_t(\tau)=\sum_{j=1}^{n-t+1}j=\binom{n-t+2}{2}.\]
Since $\height(J_t)=\binom{n-t+2}{2}$, by Theorem \ref{thm:symbsym} $\Delta\in J_t^{(\height(J_t))}$ for all $t\in\{1,\ldots ,n\}$. By exploiting Example \ref{ex:symmetric}, thus, the ideals $E_{\sigma}$ introduced in \eqref{eq:Esigma} satisfy condition ($\diamond +$).
\end{example}

\begin{example} \label{ex:skew-symmetric+}. Let us consider $\Delta$ to be the following product of Pfaffians of $Z$:
{\scriptsize \[\Delta := \begin{cases}\displaystyle [1,\ldots ,n-1][2,\ldots ,n][1,\ldots ,\widehat{\frac{n+1}{2}},\ldots ,n]\prod_{i=1}^{(n-1)/2-1}[1,\ldots ,2i][1,\ldots , \widehat{{\scriptsize i+1}},\ldots , 2i+1][n-2i,\ldots ,\widehat{{\scriptsize n-i}},\ldots ,n][n-2i+1,\ldots , n] & \text { if $n$ is odd} \\
[1,\ldots, n]\displaystyle\prod_{i=1}^{n/2-1}[1,\ldots ,2i][1,\ldots , \widehat{i+1},\ldots , 2i+1][n-2i,\ldots ,\widehat{n-i},\ldots ,n][n-2i+1,\ldots , n] & \text { if $n$ is even}
\end{cases}\]}
By considering the lexicographical term ordering $\prec$ extending the linear order
\[z_{1n}>\ldots > z_{12}>z_{2n}>\ldots >z_{23}>\ldots >z_{n-1n},\]
we have that 
\[\init_{\prec}(\Delta)=\prod_{1\leq i< j\leq n}z_{ij}\]
is a square-free monomial. Let $\tau_o$ (resp. $\tau_e$) be the shape of $\Delta$ if $n$ is odd (resp. $n$ is even); that is, $\tau_o=((\frac{n-1}{2})^3,(\frac{n-1}{2}-1)^4,\ldots ,1^4)$ and $\tau_e=(n/2,(n/2-1)^4,(n/2-2)^4,\ldots ,1^4)$. Notice that, for all $t\in\{1,\ldots ,\lfloor n/2\rfloor\}$,
\begin{eqnarray*}
\gamma_t(\tau_o) = & 3\bigg(\frac{n-1}{2}-t+1\bigg)+4\cdot \sum_{j=1}^{\frac{n-1}{2}-t}j=\bigg(\frac{n-1}{2}-t+1\bigg)(n-2t+2) = &  \\ 
& = (n/2-t+1)(n-2t+1)=(n/2-t+1)+4\cdot \sum_{j=1}^{n/2-t}j & = \gamma_t(\tau_e).
\end{eqnarray*}
Since $\height(P_{2t})=(n/2-t+1)(n-2t+1)$, by Theorem \ref{thm:symbpf} $\Delta\in P_{2t}^{(\height(P_{2t}))}$ for all $t\in\{1,\ldots ,\lfloor n/2\rfloor\}$. By exploiting Example \ref{ex:skew-symmetric}, thus, the ideals $F_{\sigma}$ introduced in \eqref{eq:Fsigma} satisfy condition ($\diamond +$).
\end{example}

\begin{thm}\label{thm:main}
If $K$ has positive characteristic and $I\subseteq S$ is an ideal enjoying the condition ($\diamond +$), then it has floating test ideals. In other words:
\[\tau(\lambda\b I)=\bigcap_{\pp\in\StAss(I)}\pp^{(\lfloor \lambda e_{\pp}(I)\rfloor +1-\height(\pp))} \ \ \ \forall \ \lambda\in \RR_{>0}.\]
In particular (independently on the characteristic!):
\[\fpt(I)=\min_{\pp\in\StAss(I)}\{\height(\pp)/e_{\pp}(I)\}\]
\end{thm}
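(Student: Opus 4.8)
\textbf{Proof strategy for Theorem \ref{thm:main}.}

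The plan is to prove the two inclusions separately, the nontrivial one being ``$\supseteq$'' since ``$\subseteq$'' is exactly Proposition \ref{prop:generalinclusion} (using that $\StAss(I)\supseteq\{\pp\supseteq I\}$ contributes no extra constraints: minimal primes of $I$ are in $\StAss(I)$, and for $\pp\in\Spec(S)$ with $\pp\supseteq I$ but $\pp\notin\StAss(I)$ one has $e_\pp(I)=0$, so $\pp^{(\lfloor\lambda\cdot 0\rfloor+1-\height\pp)}=S$ and the term drops out — I should check this bookkeeping carefully, as the index set in the statement of the theorem differs from that in Proposition \ref{prop:generalinclusion}). So fix $\lambda\in\RR_{>0}$ and set $J_\lambda:=\bigcap_{\pp\in\StAss(I)}\pp^{(\lfloor\lambda e_\pp(I)\rfloor+1-\height\pp)}$; I must show $J_\lambda\subseteq\tau(\lambda\b I)$, i.e. that for $q=p^e\gg 0$ one has $J_\lambda^{[q]}\supseteq\big(\,\text{an ideal whose }q\text{-th root we can control}\,\big)$, or more directly that $g\in J_\lambda$ forces $g^q\in I^{\lceil\lambda q\rceil}$ for $q\gg 0$... actually the cleanest route is: show $J_\lambda\cdot I^{M}\subseteq \overline{I^{\lceil\lambda q\rceil+M}}$-type containments won't suffice, so instead I would aim to show directly that $g\in J_\lambda\implies g^q\in I^{\lceil\lambda q\rceil}$ for all $q\gg 0$, which gives $g\in (I^{\lceil\lambda q\rceil})^{[1/q]}\subseteq\tau(\lambda\b I)$.

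The key device is the polynomial $F$ and term order $\prec$ from condition ($\diamond+$). Here is the mechanism I expect to use: since $\init_\prec(F)$ is a square-free monomial, say $\init_\prec(F)=x_{i_1}\cdots x_{i_h}$, for each $q$ the polynomial $F^{q-1}$ has initial term $(x_{i_1}\cdots x_{i_h})^{q-1}$, which is \emph{not} in $\mm^{[q]}=(x_1^q,\ldots,x_N^q)$; this is the standard trick (as in \cite{MSV}, \cite{He}) for producing elements outside Frobenius powers. Now take $g\in J_\lambda$. By condition ($\diamond$), for $s\gg 0$ we have $\overline{I^s}=\bigcap_{\pp\in\StAss(I)}\pp^{(e_\pp(I)s-r_{I;\pp}(s))}$ with bounded $r_{I;\pp}$. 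The idea is that $g\cdot F^{\,k}$, for a suitable $k=k(q)$ roughly proportional to $q$, lands in $\bigcap_\pp\pp^{(\text{big enough})}$: indeed $F^k\in\bigcap_\pp\pp^{(k\cdot\height\pp)}$ by condition ($\diamond+$)(ii) and linearity of $e_\pp$ on powers, while $g\in\pp^{(\lfloor\lambda e_\pp(I)\rfloor+1-\height\pp)}$; multiplying, $gF^k\in\pp^{(\lfloor\lambda e_\pp(I)\rfloor+1-\height\pp+k\height\pp)}$, and choosing $k$ so that $\lfloor\lambda e_\pp(I)\rfloor+1+(k-1)\height\pp\geq e_\pp(I)s-r_{I;\pp}(s)$ for every $\pp\in\StAss(I)$ simultaneously (possible because $\height\pp>0$ and the $r$'s are bounded, taking $s\approx\lambda q$) yields $gF^k\in\bigcap_\pp\pp^{(e_\pp(I)s-r_{I;\pp}(s))}=\overline{I^s}$. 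Then Brian\c con–Skoda ($\overline{I^{s+\mu-1}}\subseteq I^{s-\mu+1}$ type bound, with $\mu$ the number of generators) upgrades this to $gF^k\in I^{s'}$ with $s'$ still $\approx\lambda q$, in fact $s'\geq\lceil\lambda q\rceil$ once constants are absorbed for $q\gg 0$.

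Finally I take Frobenius powers: from $gF^k\in I^{\lceil\lambda q\rceil}$ (for appropriate $q$, with $k=k(q)$) I want to strip off $F^k$. The point is to choose $k(q)$ to be essentially $q-1$ times a fixed integer (or to run the argument with $\lambda$ replaced by $\lambda$ and an auxiliary large power), so that $(gF^k)^{?}$... more precisely, the standard completion is: one shows $g^q F^{kq}\in I^{\lceil\lambda q\rceil q}$, but that's circular; instead, reduce to proving $g\in\tau(\lambda\b I)$ via the characterization $\tau(\lambda\b I)=\bigcup_e (I^{\lceil\lambda p^e\rceil})^{[1/p^e]}$, so it suffices to find \emph{one} large $q$ with $g^q\in I^{\lceil\lambda q\rceil}$. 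Replacing $g$ by $g$ and using that $\init_\prec$ is multiplicative, from $gF^{q-1}\in I^{\lceil\lambda q\rceil}$ one argues modulo $\mm^{[q]}$: since $\init_\prec(F^{q-1})=(\init_\prec F)^{q-1}\notin\mm^{[q]}$ and is square-free-based, a monomial/initial-ideal argument (exactly the one used to prove Proposition \ref{prop:rangefpt} and in \cite{MSV,He}) shows $g^{\,?}\in I^{\lceil\lambda q\rceil}\setminus\mm^{[q]}$ cannot be blocked, giving $g\in(I^{\lceil\lambda q\rceil})^{[1/q]}$. \textbf{The main obstacle} I anticipate is precisely this last ``division by $F^{q-1}$'' step: making rigorous that membership $gF^{q-1}\in I^{\lceil\lambda q\rceil}$ together with $\init_\prec(F^{q-1})\notin\mm^{[q]}$ forces $g\in(I^{\lceil\lambda q\rceil})^{[1/q]}$ requires a careful argument about how initial ideals interact with $q$-th roots (one typically passes to the initial ideal $\init_\prec(I^{\lceil\lambda q\rceil})$, uses that it is a monomial ideal, that $\init_\prec(F^{q-1})$ is a product of $(q-1)$-st powers of distinct variables, and a division/colon computation), and getting the index bookkeeping ($\lceil\lambda q\rceil$ vs.\ $e_\pp(I)s$ vs.\ the floor/ceiling shifts and the Brian\c con–Skoda loss) to line up for $q\gg 0$ is the delicate part. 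The displayed $\fpt$ formula then follows immediately: $\tau(\lambda\b I)\neq S$ iff some factor $\pp^{(\lfloor\lambda e_\pp(I)\rfloor+1-\height\pp)}\neq S$ iff $\lfloor\lambda e_\pp(I)\rfloor+1-\height\pp\geq 1$ for some $\pp$, i.e.\ $\lambda e_\pp(I)\geq\height\pp$, whose minimal solution is $\lambda=\min_\pp\height\pp/e_\pp(I)$.
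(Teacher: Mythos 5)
Your architecture is the right one and matches the paper's: the inclusion ``$\subseteq$'' is Proposition \ref{prop:generalinclusion} (and your worry about the index sets is harmless --- since $\StAss(I)\subseteq V(I)$, the right-hand side of the theorem is a \emph{larger} intersection than the one in the proposition, so the containment is immediate; note though that your claim $e_{\pp}(I)=0$ for $\pp\supseteq I$, $\pp\notin\StAss(I)$ is false, as any such $\pp$ has $e_{\pp}(I)\geq 1$). The roles you assign to $F$, $\prec$, condition ($\diamond$) and Brian\c con--Skoda are also the correct ones. However, there are two genuine gaps in the ``$\supseteq$'' direction. First, a quantitative one: you feed $gF^{k}$ into the symbolic-power computation with $g$ to the first power, so to reach $\pp^{(e_{\pp}(I)s)}$ with $s\approx\lambda q$ you are forced to take $k\gtrsim \lambda e_{\pp}(I)q/\height(\pp)$, i.e.\ $k=c(q-1)$ with $c\geq\max_{\pp}\lambda e_{\pp}(I)/\height(\pp)$, which exceeds $1$ as soon as $\lambda>\fpt(I)$; then $\init_{\prec}(F)^{k}\in\mm^{[q]}$ and the entire ``division by $F^{k}$'' mechanism collapses. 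The paper avoids this by working with $F^{q-1}f^{q}$ instead: raising $f$ to the $q$-th power already contributes $q\big(\lfloor\lambda e_{\pp}(I)\rfloor+1-\height(\pp)\big)$ to the symbolic order, so $F^{q-1}\in\pp^{((q-1)\height(\pp))}$ only needs to cover the deficit $q-\height(\pp)$, and one gets $F^{q-1}f^{q}\in\pp^{(q\lambda e_{\pp}(I))}\subseteq\overline{I^{q\lambda}}$ for $q\gg0$ while keeping the exponent of $F$ equal to $q-1$.

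Second, your proposed endgame --- ``find one large $q$ with $g^{q}\in I^{\lceil\lambda q\rceil}$'' --- is a target that is strictly stronger than $g\in\tau(\lambda\b I)$ and is false in general (take $I=\mm$, $\lambda=N$, $g=x_1$: then $g\in\tau(N\b\mm)=\mm$ but $x_1^{q}\notin\mm^{Nq}$). The correct extraction, which you only gesture at, is the coefficient criterion of \cite[Proposition 2.5]{BMS:MMJ}: after Brian\c con--Skoda one has $F^{(q-1)(q'+k)}f^{q(q'+k)}\in I^{qq'\lambda}$ for every power $q'$ of $p$, and one chooses $q'>qk(\deg f+1)$ so that $v=\init_{\prec}(F)^{(q-1)(q'+k)}\init_{\prec}(f)^{qk}$ lies in the monomial basis $\B_{qq'}$ of $S$ over $S^{qq'}$ (this is exactly where square-freeness of $\init_{\prec}(F)$ enters); expanding $F^{(q-1)(q'+k)}f^{qk}=v+\sum_{u\prec v}g_u^{qq'}u$ and multiplying by $f^{qq'}$ exhibits $f$ as the $qq'$-th root of the coefficient of the basis element $v$ in an element of $I^{qq'\lambda}$, whence $f\in(I^{\lceil qq'\lambda\rceil})^{[1/qq']}\subseteq\tau(\lambda\b I)$. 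Without this two-scale $(q,q')$ device your single-$q$ argument cannot close. Your derivation of the $\fpt$ formula from the test-ideal formula is correct.
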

\begin{proof}
Fix $\lambda\in\RR_{>0}$. By Proposition \ref{prop:generalinclusion}, we already know that 
\[\tau(\lambda\b I)\subseteq \bigcap_{\pp\in\StAss(I)}\pp^{(\lfloor \lambda e_{\pp}(I)\rfloor +1-\height(\pp))},\]
so we will focus on the other inclusion. Take 
\[f\in \bigcap_{\pp\in\StAss(I)}\pp^{(\lfloor \lambda e_{\pp}(I)\rfloor +1-\height(\pp))}.\] 
Consider $F$ and $\prec$ as in the definition of the condition ($\diamond +$). For any $\pp\in\StAss(I)$ and $q=p^e$ (where $\chara(K)=p$), notice that:
\begin{eqnarray*}
F^{q-1}\cdot f^q & \in & \bigg(\pp^{(\height(\pp)}\bigg)^{q-1} \cdot \bigg(\pp^{(\lfloor \lambda e_{\pp}(I)\rfloor +1-\height(\pp))}\bigg)^{q} \\
& \subseteq & \pp^{((q-1)\height(\pp)+q(\lfloor \lambda e_{\pp}(I)\rfloor +1-\height(\pp)))} \\
& = & \pp^{(q\lfloor \lambda e_{\pp}(I)\rfloor +q-\height(\pp))} \\
& = & \pp^{\bigg(q\bigg(\lfloor \lambda e_{\pp}(I)\rfloor +\frac{q-\height{\pp}}{q}\bigg)\bigg)}.
\end{eqnarray*}
If $q$ is big enough, then 
\[q\bigg(\lfloor \lambda e_{\pp}(I)\rfloor +\frac{q-\height{\pp}}{q}\bigg)\geq q\lambda e_{\pp}(I).\]
By \cite[Proposition 2.14]{BMS:MMJ}, we can assume that $q\lambda$ is an integer, and so we will do from now on.
So, let us fix $q$ big enough so that 
\[F^{q-1}f^q\in \pp^{(q\lambda e_{\pp}(I))} \ \ \ \forall \ \pp\in\StAss(I).\] 
Therefore,
\[F^{q-1}f^q\in \overline{I^{q\lambda}}.\]
Take a positive integer $k$ such that 
\[\bigg(\overline{I^{q\lambda}}\bigg)^{k+\ell}\subseteq I^{q\ell\lambda}  \ \ \ \forall \ \ell\in \NN.\]
In particular, if $q'$ a power of $p$, we have
\[F^{(q-1)(q'+k)}f^{q(q'+k)}\in I^{qq'\lambda}.\]
Let $\B_{qq'}$ be the basis of $S$ over $S^{qq'}$ consisting in monomials. Remembering that $q$ has been fixed, we can choose $q'$ big enough such that 
\[v:=\init_{\prec}(F^{(q-1)(q'+k)}f^{qk})=\init_{\prec}(F)^{(q-1)(q'+k)}\init_{\prec}(f)^{qk}\in\B_{qq'}.\]
In fact, it is enough to take $q'>qk(\deg(f)+1)$. So
\[F^{(q-1)(q'+k)}f^{qk}=v+\sum_{\substack{u\in\B_{qq'} \\ u\prec v}}g_u^{qq'}u.\]
Therefore, we get
\[F^{(q-1)(q'+k)}f^{q(q'+k)}=f^{qq'}v+\sum_{\substack{u\in\B_{qq'} \\ u\prec v}}(f g_u)^{qq'}u,\]
from which we deduce that $f\in (I^{\lceil qq'\lambda \rceil})^{[1/qq']}$ by using \cite[Proposition 2.5]{BMS:MMJ}. So
\[f\in \tau(\lambda\b I).\]
\end{proof}

An important consequence of Theorem \ref{thm:main}, together with Examples \ref{ex:generic+}, \ref{ex:symmetric+} and \ref{ex:skew-symmetric+}, is that the products of determinantal (or Pfaffian) ideals have floating test ideals. Moreover, we have the following explicit formulas for their generalized test ideals:

\begin{corollary}\label{cor:generic}
With the notation of \ref{sec:generic}, $D_{\sigma}$ has floating test ideals $\forall \ \sigma\in \H_m$. Precisely:
\[\tau\big(\lambda\b D_{\sigma}\big)=\bigcap_{i=1}^mI_i^{(\lfloor \lambda \gamma_i(\sigma)\rfloor +1-(m-i+1)(n-i+1))} \ \ \ \forall \ \lambda\in \RR_{>0}.\]
Equivalently, $\tau\big(\lambda\b D_{\sigma}\big)$ is generated by the products of minors of $X$ whose shape $\alpha$ satisfies:
\[\gamma_i(\alpha)\geq \lfloor\lambda \gamma_i(\sigma)\rfloor +1-(m-i+1)(n-i+1) \ \ \forall \ i=1,\ldots ,m.\]
In particular (independently on the characteristic!):
\[\fpt(D_{\sigma})=\min\bigg\{\frac{(m-i+1)(n-i+1)}{\gamma_i(\sigma)}:i=1,\ldots ,m\bigg\}.\]
\end{corollary}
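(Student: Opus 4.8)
The plan is to read this off Theorem~\ref{thm:main} by identifying the data $(\StAss(D_\sigma),\,e_\pp(D_\sigma),\,\height(\pp))$, and to avoid having to pin down $\StAss(D_\sigma)$ precisely by pinching the answer between Theorem~\ref{thm:main} and Proposition~\ref{prop:generalinclusion}. By Examples~\ref{ex:generic} and~\ref{ex:generic+} the ideal $D_\sigma$ satisfies condition~($\diamond+$), so Theorem~\ref{thm:main} applies. From Example~\ref{ex:generic} we have $\overline{D_\sigma^{\,s}}=\bigcap_{i=1}^{\sigma_1}I_i^{(s\gamma_i(\sigma))}$ for every $s$ (the indices $i>\sigma_1$ drop out, since then $\gamma_i(\sigma)=0$), and each factor $I_i^{(s\gamma_i(\sigma))}$ is $I_i$-primary, so $\StAss(D_\sigma)\subseteq\{I_1,\dots,I_{\sigma_1}\}$. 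For $i\le\sigma_1$ one has $e_{I_i}(D_\sigma)=\gamma_i(\sigma)$: the inclusion $D_\sigma\subseteq\overline{D_\sigma}\subseteq I_i^{(\gamma_i(\sigma))}$ coming from Theorem~\ref{thm:Dsigmadiamond} gives ``$\ge$'', while a product of minors of shape $\sigma$ in straightened form lies in $I_i^{(\gamma_i(\sigma))}\setminus I_i^{(\gamma_i(\sigma)+1)}$ by the standard-monomial refinement of Theorem~\ref{thm:symb}, giving ``$\le$''. Finally $\height(I_i)=(m-i+1)(n-i+1)$ is the classical height formula for generic determinantal ideals, already recalled in Example~\ref{ex:generic+}.

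Write $c_i:=\lfloor\lambda\gamma_i(\sigma)\rfloor+1-(m-i+1)(n-i+1)$. On one side, Theorem~\ref{thm:main} yields $\tau(\lambda\b D_\sigma)=\bigcap_{\pp\in\StAss(D_\sigma)}\pp^{(\lfloor\lambda e_\pp(D_\sigma)\rfloor+1-\height(\pp))}\supseteq\bigcap_{i=1}^{\sigma_1}I_i^{(c_i)}$, the inclusion because $\StAss(D_\sigma)$ is a subset of $\{I_1,\dots,I_{\sigma_1}\}$ and intersecting fewer ideals enlarges the result. On the other side, each $I_i$ with $i\le\sigma_1$ contains $D_\sigma$ (indeed $\sqrt{D_\sigma}=I_{\sigma_1}\subseteq I_i$) and satisfies $e_{I_i}(D_\sigma)=\gamma_i(\sigma)$, so Proposition~\ref{prop:generalinclusion} applied to the single prime $I_i$ gives $\tau(\lambda\b D_\sigma)\subseteq I_i^{(c_i)}$, hence $\tau(\lambda\b D_\sigma)\subseteq\bigcap_{i=1}^{\sigma_1}I_i^{(c_i)}$. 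The two inclusions pinch to $\tau(\lambda\b D_\sigma)=\bigcap_{i=1}^{\sigma_1}I_i^{(c_i)}$; and for $\sigma_1<i\le m$ one has $\gamma_i(\sigma)=0$, whence $c_i=1-(m-i+1)(n-i+1)\le 0$ and $I_i^{(c_i)}=S$, so the intersection can be extended to run over all $i=1,\dots,m$ without change. This is the first claimed equality, and it is precisely the statement that $D_\sigma$ has floating test ideals. The reformulation in terms of shapes follows from the standard monomial theory of $K[X]$ (De Concini--Eisenbud--Procesi): each $I_i^{(c)}$ has a $K$-basis consisting of the standard products of minors whose shape $\alpha$ satisfies $\gamma_i(\alpha)\ge c$, so $\bigcap_{i=1}^mI_i^{(c_i)}$ has a basis of standard monomials whose shape satisfies $\gamma_i(\alpha)\ge c_i$ for all $i$ (a vacuous condition when $c_i\le 0$). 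The $F$-pure threshold is then read off from the last line of Theorem~\ref{thm:main}: $\fpt(D_\sigma)=\min_{1\le i\le\sigma_1}\{(m-i+1)(n-i+1)/\gamma_i(\sigma)\}$, and the indices $i>\sigma_1$, contributing $+\infty$, may be adjoined so that $i$ ranges over $1,\dots,m$.

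There is no serious obstacle, since the substantive content lies in Theorem~\ref{thm:main} and the remaining input is classical information about determinantal ideals. The one point worth flagging --- and the reason I would route the proof through Proposition~\ref{prop:generalinclusion} instead of computing $\StAss(D_\sigma)$ directly --- is that $\StAss(D_\sigma)$ can be strictly smaller than $\{I_1,\dots,I_{\sigma_1}\}$ (for example $\StAss(D_{(2,2)})=\{I_2\}$), so one may not simply substitute the full list into Theorem~\ref{thm:main}; the pinching argument sidesteps this entirely. The only other routine technicality is upgrading the ``generated by'' formulation of Theorem~\ref{thm:symb} to a genuine standard-monomial basis, which is what underlies both the computation $e_{I_i}(D_\sigma)=\gamma_i(\sigma)$ and the description of $\bigcap_i I_i^{(c_i)}$ via shapes.
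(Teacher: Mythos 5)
Your proof is correct and follows essentially the same route as the paper, which derives this corollary directly from Theorem~\ref{thm:main} via Examples~\ref{ex:generic} and~\ref{ex:generic+}; your pinching against Proposition~\ref{prop:generalinclusion} is a careful way of filling in the step (left implicit in the paper) that replaces the intersection over $\StAss(D_\sigma)$ by the intersection over all of $I_1,\ldots,I_m$. One immaterial quibble: your motivating example is off, since $\overline{I_2^{2s}}=I_1^{(4s)}\cap I_2^{(2s)}$ is an irredundant decomposition (a single $(2s+1)$-minor lies in $I_2^{(2s)}$ but not in $I_1^{(4s)}$), so $\StAss(D_{(2,2)})=\{I_1,I_2\}$ rather than $\{I_2\}$ --- but your argument correctly avoids relying on any such computation.
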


\begin{corollary}\label{cor:symmetric}
With the notation of \ref{sec:symmetric}, $E_{\sigma}$ has floating test ideals $\forall \ \sigma\in \H_n$. Precisely:
\[\tau\big(\lambda\b E_{\sigma}\big)=\bigcap_{i=1}^nJ_i^{(\lfloor \lambda \gamma_i(\sigma)\rfloor +1-\binom{n-i+2}{2})} \ \ \ \forall \ \lambda\in \RR_{>0}.\]
Equivalently, $\tau\big(\lambda\b E_{\sigma}\big)$ is generated by the products of minors of $Y$ whose shape $\alpha$ satisfies:
\[\gamma_i(\alpha)\geq \lfloor\lambda \gamma_i(\sigma)\rfloor +1-\binom{n-i+2}{2} \ \ \forall \ i=1,\ldots ,n.\]
In particular (independently on the characteristic!):
\[\fpt(E_{\sigma})=\min\bigg\{\frac{\binom{n-i+2}{2}}{\gamma_i(\sigma)}:i=1,\ldots ,n\bigg\}.\]
\end{corollary}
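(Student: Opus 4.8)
The plan is to specialise Theorem \ref{thm:main} to the ideal $I = E_\sigma$. The first task is to check that $E_\sigma$ enjoys condition $(\diamond+)$. Condition $(\diamond)$ is Example \ref{ex:symmetric}: applying Theorem \ref{thm:icsumsofprodsym} to the one-point set $\Sigma = \{\sigma\}$, and observing that $E_\sigma^s = J_{\sigma_1}^s\cdots J_{\sigma_k}^s = E_\tau$ where $\tau$ is the diagram obtained from $\sigma$ by repeating each part $s$ times (so $\gamma_i(\tau) = s\gamma_i(\sigma)$), one gets
\[\overline{E_\sigma^s} = \bigcap_{i=1}^n J_i^{(s\gamma_i(\sigma))} \qquad \forall\, s\ge 1,\]
an intersection of symbolic powers of primes, as required. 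The remaining data for $(\diamond+)$ --- a term order $\prec$ and a polynomial whose $\prec$-initial term is square-free and which lies in $J_i^{(\height(J_i))}$ for every relevant $i$ --- is exactly what Example \ref{ex:symmetric+} provides. Hence Theorem \ref{thm:main} applies and gives
\[\tau(\lambda\b E_\sigma) = \bigcap_{\pp\in\StAss(E_\sigma)} \pp^{(\lfloor \lambda e_\pp(E_\sigma)\rfloor + 1 - \height(\pp))}\quad\text{and}\quad \fpt(E_\sigma) = \min_{\pp\in\StAss(E_\sigma)}\frac{\height(\pp)}{e_\pp(E_\sigma)}.\]

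It then remains to make the three ingredients $\StAss(E_\sigma)$, $\height(\pp)$ and $e_\pp(E_\sigma)$ explicit. From the displayed decomposition of $\overline{E_\sigma^s}$, every associated prime of $\overline{E_\sigma^s}$ lies in $\{J_1,\dots,J_n\}$, and --- after deleting the trivial factors, i.e. those with $\gamma_i(\sigma)=0$, which are exactly the $J_i$ with $i>\sigma_1$ --- this primary decomposition is irredundant, so $\StAss(E_\sigma) = \{J_1,\dots,J_{\sigma_1}\}$. I would either invoke this as the classical irredundancy of the symbolic-power decomposition of powers of determinantal ideals of a symmetric matrix, or prove it directly by showing $\bigcap_{i\ne i_0} J_i^{(s\gamma_i(\sigma))}\not\subseteq J_{i_0}^{(s\gamma_{i_0}(\sigma))}$ after localising at $J_{i_0}$. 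The heights $\height(J_i) = \binom{n-i+2}{2}$ are recorded in Example \ref{ex:symmetric+}. For $e_{J_i}(E_\sigma)$, write $E_\sigma = J_{\sigma_1}\cdots J_{\sigma_k}$; since $S_{J_i}$ is regular local, $\gr_{J_iS_{J_i}}(S_{J_i})$ is a polynomial ring, hence a domain, so leading forms multiply and consequently $e_{J_i}(-)$ is additive on products of ideals, giving $e_{J_i}(E_\sigma) = \sum_j e_{J_i}(J_{\sigma_j})$. A single $t$-minor of $Y$ has shape $(t)$, with $\gamma_i((t)) = \max\{0,\,t-i+1\}$, so by Theorem \ref{thm:symbsym} it lies in $J_i^{(t-i+1)}$ but not in $J_i^{(t-i+2)}$ (the straightened products of minors forming a $K$-basis of $S$ are linearly independent); hence $e_{J_i}(J_t) = \max\{0,\,t-i+1\}$ and $e_{J_i}(E_\sigma) = \sum_j \max\{0,\,\sigma_j-i+1\} = \gamma_i(\sigma)$. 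Substituting $e_{J_i}(E_\sigma)=\gamma_i(\sigma)$ and $\height(J_i)=\binom{n-i+2}{2}$, and noting that the factors with $\gamma_i(\sigma)=0$ contribute $J_i^{(1-\binom{n-i+2}{2})} = S$ (so the intersection may harmlessly be taken over all $i = 1,\dots,n$), yields the stated formulas for $\tau(\lambda\b E_\sigma)$ and $\fpt(E_\sigma)$.

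For the description of $\tau(\lambda\b E_\sigma)$ in terms of products of minors of $Y$, I would appeal to standard monomial theory: by Theorem \ref{thm:symbsym}, for each integer $c$ the symbolic power $J_i^{(c)}$ has a $K$-basis consisting of the straightened products of minors of $Y$ whose shape $\alpha$ satisfies $\gamma_i(\alpha)\ge c$, and since straightening never decreases any $\gamma_i$, the intersection $\bigcap_{i=1}^n J_i^{(c_i)}$ has a $K$-basis consisting of the straightened products of minors whose shape $\alpha$ satisfies $\gamma_i(\alpha)\ge c_i$ for every $i$. Taking $c_i = \lfloor\lambda\gamma_i(\sigma)\rfloor + 1 - \binom{n-i+2}{2}$ gives exactly the claimed generators. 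The whole proof is the symmetric-matrix analogue of Corollary \ref{cor:generic}, obtained by replacing $X$, $I_t$, $D_\sigma$ and Theorem \ref{thm:symb} by $Y$, $J_t$, $E_\sigma$ and Theorem \ref{thm:symbsym}. I expect the only delicate points to be the two I flagged --- confirming that $\StAss(E_\sigma)$ is precisely $\{J_1,\dots,J_{\sigma_1}\}$ (equivalently, that the $J_{i_0}$-primary component of $\overline{E_\sigma^s}$ is $J_{i_0}^{(s\gamma_{i_0}(\sigma))}$ and not a higher symbolic power), and the identity $e_{J_i}(E_\sigma)=\gamma_i(\sigma)$ via additivity of orders over products together with Theorem \ref{thm:symbsym}; the rest is a direct substitution into Theorem \ref{thm:main}.
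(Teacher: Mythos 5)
Your proposal is correct and follows exactly the paper's route: the corollary is deduced by feeding Example \ref{ex:symmetric+} (condition $(\diamond+)$ for $E_\sigma$) into Theorem \ref{thm:main} and then identifying $\StAss(E_\sigma)$, $\height(J_i)=\binom{n-i+2}{2}$ and $e_{J_i}(E_\sigma)=\gamma_i(\sigma)$, details the paper leaves implicit and which you supply correctly (the valuation-additivity argument for $e_{J_i}$ is sound). The one point you flag as delicate --- irredundancy of the decomposition $\overline{E_\sigma^s}=\bigcap_i J_i^{(s\gamma_i(\sigma))}$ --- is actually not needed: since Proposition \ref{prop:generalinclusion} bounds $\tau(\lambda\b E_\sigma)$ above by the intersection over \emph{all} primes containing $E_\sigma$, while Theorem \ref{thm:main} bounds it below by the intersection over the (possibly smaller) set $\StAss(E_\sigma)\subseteq\{J_1,\ldots,J_{\sigma_1}\}$, the two intersections are sandwiched and hence equal.
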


\begin{corollary}\label{cor:skew-symmetric}
With the notation of \ref{sec:skew-symmetric}, $F_{\sigma}$ has floating test ideals $\forall \ \sigma\in \H_{\lfloor n/2\rfloor}$. Precisely:
\[\tau\big(\lambda\b F_{\sigma}\big)=\bigcap_{i=1}^{\lfloor n/2\rfloor}P_{2i}^{(\lfloor \lambda \gamma_i(\sigma))\rfloor +1-(n/2-i+1)(n-2i+1))} \ \ \ \forall \ \lambda\in \RR_{>0}.\]
Equivalently, $\tau\big(\lambda\b F_{\sigma}\big)$ is generated by the products of Pfaffians of $Z$ whose shape $\alpha$ satisfies:
\[\gamma_i(\alpha)\geq \lfloor\lambda \gamma_i(\sigma)\rfloor +1-(n/2-i+1)(n-2i+1) \ \ \forall \ i=1,\ldots ,\lfloor n/2\rfloor.\]
In particular (independently on the characteristic!):
\[\fpt(F_{\sigma})=\min\bigg\{\frac{(n/2-i+1)(n-2i+1)}{\gamma_i(\sigma)}:i=1,\ldots ,\lfloor n/2\rfloor\bigg\}.\]
\end{corollary}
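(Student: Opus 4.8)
The plan is to deduce Corollary \ref{cor:skew-symmetric} directly from Theorem \ref{thm:main}, exactly as Corollaries \ref{cor:generic} and \ref{cor:symmetric} are obtained. The key point is that all the necessary verifications have already been made: Example \ref{ex:skew-symmetric} shows $F_{\sigma}$ satisfies condition ($\diamond$), with $\overline{F_{\sigma}^s}=\bigcap_{i=1}^{\lfloor n/2\rfloor}P_{2i}^{(s\gamma_i(\sigma))}$, and Example \ref{ex:skew-symmetric+} exhibits the polynomial $\Delta$ (a product of Pfaffians) together with the lexicographic term order $\prec$ witnessing condition ($\diamond +$) for $F_{\sigma}$. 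So $F_{\sigma}$ has floating test ideals by Theorem \ref{thm:main}, and it remains only to unwind the statement of that theorem in the present notation.

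First I would identify $\StAss(F_{\sigma})$ and the invariants $e_{\pp}(F_{\sigma})$. From the primary decomposition in Example \ref{ex:skew-symmetric}, the associated primes of $\overline{F_{\sigma}^s}$ (for $s\gg 0$, hence all $s$ by Ratliff's stabilization) are exactly the Pfaffian ideals $P_{2i}$ with $\gamma_i(\sigma)>0$, i.e. those $i$ with $1\le i\le$ (number of parts of $\sigma$); for the others the component is the unit ideal and can be dropped. For such $i$ one reads off $e_{P_{2i}}(F_{\sigma})=\max\{\ell:F_{\sigma}\subseteq P_{2i}^{(\ell)}\}=\gamma_i(\sigma)$, using that $\overline{F_{\sigma}}=\bigcap_j P_{2j}^{(\gamma_j(\sigma))}$ and that symbolic powers of $P_{2i}$ are integrally closed so $F_{\sigma}\subseteq P_{2i}^{(\ell)}$ iff $\overline{F_{\sigma}}\subseteq P_{2i}^{(\ell)}$ iff $\ell\le\gamma_i(\sigma)$. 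Finally I need the height: it is classical (Józefiak, or see the references in the paper) that $\height(P_{2i})=(n/2-i+1)(n-2i+1)$, which is also the value of $\gamma_i$ on the shape of the full product of Pfaffians computed in Example \ref{ex:skew-symmetric+}. Substituting $e_{\pp}(F_{\sigma})=\gamma_i(\sigma)$ and $\height(\pp)=(n/2-i+1)(n-2i+1)$ into the formula of Theorem \ref{thm:main} gives the displayed expression for $\tau(\lambda\b F_{\sigma})$ and, taking the minimum of $\height(\pp)/e_{\pp}(F_{\sigma})$ over $\pp\in\StAss(F_{\sigma})$, the stated formula for $\fpt(F_{\sigma})$.

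For the ``equivalently'' reformulation in terms of products of Pfaffians, I would invoke Theorem \ref{thm:symbpf}: an element lies in $P_{2i}^{(m_i)}$ for all $i$ precisely when it is generated by products of Pfaffians whose shapes $\alpha$ satisfy $\gamma_i(\alpha)\ge m_i$ for all $i$ — more precisely, the intersection $\bigcap_i P_{2i}^{(m_i)}$ is generated by such products, since each $P_{2i}^{(m_i)}$ is spanned by standard monomials in products of Pfaffians and the straightening law is compatible with the $\gamma_i$; this is the same mechanism used implicitly in the analogous determinantal corollaries. Setting $m_i=\lfloor\lambda\gamma_i(\sigma)\rfloor+1-(n/2-i+1)(n-2i+1)$ yields the generator description. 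One should note a cosmetic point: when some $m_i\le 0$ the corresponding symbolic power is all of $S$ and imposes no condition, consistent with $\StAss(F_{\sigma})$ only containing the $P_{2i}$ with $\gamma_i(\sigma)>0$.

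The only genuine obstacle is bookkeeping rather than mathematics: one must be careful that $\StAss(F_{\sigma})$ is exactly $\{P_{2i}:\gamma_i(\sigma)>0\}$ and not accidentally include primes where the symbolic exponent happens to vanish, and that the two-case formula for $\Delta$ in Example \ref{ex:skew-symmetric+} really does certify $\Delta\in P_{2t}^{(\height P_{2t})}$ for every relevant $t$ in both parities — but this is precisely the content of that example, which computes $\gamma_t(\tau_o)=\gamma_t(\tau_e)=(n/2-t+1)(n-2t+1)=\height(P_{2t})$. Granting those, the corollary is a direct specialization of Theorem \ref{thm:main}.
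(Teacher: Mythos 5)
Your proposal is correct and follows exactly the route the paper intends: the corollary is stated as an immediate consequence of Theorem \ref{thm:main} combined with Examples \ref{ex:skew-symmetric} and \ref{ex:skew-symmetric+}, with $\StAss(F_{\sigma})=\{P_{2i}\}$, $e_{P_{2i}}(F_{\sigma})=\gamma_i(\sigma)$, $\height(P_{2i})=(n/2-i+1)(n-2i+1)$, and Theorem \ref{thm:symbpf} giving the reformulation in terms of shapes of products of Pfaffians. Your extra care about which $P_{2i}$ actually lie in $\StAss(F_{\sigma})$ and about the trivial components when the symbolic exponent is nonpositive is sound bookkeeping that the paper leaves implicit.
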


\section{Multiplier ideals of $G$-invariant ideals}\label{sec:mult}
The goal of this section is to give explicit formulas for the multipliers ideal of all the $G$-invariant ideals in the following polynomial rings $S$ over a field of characteristic 0:

\begin{itemize}
\item[(i)] $S=\Sym(V\otimes W^*)$, where $V$ and $W$ are finite $K$-vector spaces, $G=\GL(V)\times \GL(W)$ and the action extends the diagonal one on $V\otimes W^*$.
\item[(ii)] $S=\Sym(\Sym^2V)$, where $V$ is a finite $K$-vector spaces, $G=\GL(V)$ and the action extends the natural one on $\Sym^2V$. 
\item[(iii)] $S=\Sym(\bigwedge^2V)$, where $V$ is a finite $K$-vector spaces, $G=\GL(V)$ and the action extends the natural one on $\bigwedge^2V$. 
\end{itemize} 

In order to do this, we will compute suitable generalized test ideals in positive characteristic. We need the following variant of the condition ($\diamond+$).

\begin{definition}[{\bf Condition ($\ast$)}]\label{def:ast}
An ideal $I\subseteq S$ satisfies condition ($\ast$) if there are prime ideals $\pp_1,\ldots ,\pp_k$ and a polytope $P\subseteq \RR^k$ such that:
\begin{equation}\label{eq:ast}
\overline{I^s}=\sum_{\ab=(a_1,\ldots ,a_k)\in P}\left(\bigcap_{i=1}^k\pp_i^{(\lceil sa_i\rceil)}\right) \ \ \ \forall \ s\gg 0,
\end{equation}
and there exists a term ordering $\prec$ on $S$ and a polynomial $F\in S$ such that:
\begin{itemize}
\item[(i)] $\init_{\prec}(F)$ is a square-free monomial;
\item[(ii)] $F\in \pp_i^{(\height(\pp_i))}$ for all $i=1,\ldots ,k$.
\end{itemize}
\end{definition}

\begin{example}\label{ex:detast}
Given two diagrams $\sigma=(\sigma_1,\ldots ,\sigma_k)$ and $\tau=(\tau_1,\ldots ,\tau_h)$ let us denote by $\sigma*\tau$ their concatenation $(\sigma_1,\ldots ,\sigma_k,\tau_1,\ldots ,\tau_h)$ with the entries rearranged decreasingly (so that $\sigma*\tau$ is a diagram). For a set $\Sigma$ of diagrams and $s\in\NN$, let us introduce the notation
\[\Sigma^s:=\{\sigma^{(i_1)}* \cdots *\sigma^{(i_s)}:\sigma^{(i_j)}\in\Sigma\}.\] 
Notice that, if $\Sigma\subseteq \H_k$ for some $k\in\NN$, the convex set $P_{\Sigma^s}\subseteq \RR^k$ is nothing but $s\cdot P_{\Sigma}$.
Therefore, Theorem \ref{thm:icsumsofprod} implies that,
for a subset $\Sigma\subseteq \H_m$, the integral closure of $D(\Sigma)^s=D(\Sigma^s)$ is equal to
\[\sum_{\ab=(a_1,\ldots ,a_m)\in P_{\Sigma}}\left(\bigcap_{i=1}^mI_i^{(\lceil sa_i\rceil)}\right).\]
As well as Theorem \ref{thm:icsumsofprodsym} implies that,
for a subset $\Sigma\subseteq \H_n$, the integral closure of $E(\Sigma)^s=E(\Sigma^s)$ is equal to
\[\sum_{\ab=(a_1,\ldots ,a_n)\in P_{\Sigma}}\left(\bigcap_{i=1}^nJ_i^{(\lceil sa_i\rceil)}\right).\]
As well as Theorem \ref{thm:icsumsofprodpf} implies that,
for a subset $\Sigma\subseteq \H_{\lfloor n/2\rfloor}$, the integral closure of $F(\Sigma)^s=F(\Sigma^s)$ is equal to
\[\sum_{\ab=(a_1,\ldots ,a_{\lfloor n/2\rfloor})\in P_{\Sigma}}\left(\bigcap_{i=1}^{\lfloor n/2\rfloor}P_{2i}^{(\lceil sa_i\rceil)}\right).\]
So, exploiting Examples \ref{ex:generic+}, \ref{ex:symmetric+} and \ref{ex:skew-symmetric+}, the ideals $D(\Sigma)$, $E(\Sigma)$ and $F(\Sigma)$, introduced in \ref{sec:generic}, \ref{sec:symmetric} and \ref{sec:skew-symmetric} all satisfy condition ($\ast$).
\end{example}

\begin{thm}\label{thm:main2}
If $K$ has positive characteristic and $I\subseteq S$ is an ideal enjoying the condition ($\ast$) as in Definition \ref{def:ast}, then
\[\tau(\lambda\b I)=\sum_{\ab=(a_1,\ldots ,a_k)\in P}\left(\bigcap_{i=1}^k\pp_i^{(\lfloor \lambda a_i\rfloor +1-\height(\pp_i))}\right) \ \ \ \forall \ \lambda\in \RR_{>0}.\]
\end{thm}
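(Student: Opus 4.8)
The plan is to reduce Theorem \ref{thm:main2} to a situation where Theorem \ref{thm:main} applies, exploiting the fact that condition ($\ast$) is built so that, after raising $I$ to a power $s$, the integral closure $\overline{I^s}$ is a \emph{sum} of ideals each of which looks like the integral closure of a product of symbolic powers of the $\pp_i$'s. Concretely, for $\ab=(a_1,\dots,a_k)$ a rational point of $P$ with common denominator $s_0$, consider the ideal $J_{\ab}:=\bigcap_{i=1}^k\pp_i^{(s_0a_i)}$; condition ($\ast$) forces $\overline{J_{\ab}^{\,s}}=\bigcap_i\pp_i^{(ss_0a_i)}$ for $s\gg0$, so each such $J_{\ab}$ satisfies condition ($\diamond$) with $\StAss(J_{\ab})\subseteq\{\pp_1,\dots,\pp_k\}$ and $e_{\pp_i}(J_{\ab})=s_0a_i$; and the very same $F$ and $\prec$ from Definition \ref{def:ast} witness condition ($\diamond+$) for $J_{\ab}$. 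Hence Theorem \ref{thm:main} gives
\[\tau(\lambda\b J_{\ab})=\bigcap_{i=1}^k\pp_i^{(\lfloor \lambda s_0a_i\rfloor+1-\height(\pp_i))}.\]

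First I would establish the inclusion ``$\supseteq$''. Since $P$ is a polytope, the right-hand side of the asserted formula is a finite sum over vertices (or over a dense set of rational points), so it suffices to show that each summand $\bigcap_i\pp_i^{(\lfloor\lambda a_i\rfloor+1-\height(\pp_i))}$ is contained in $\tau(\lambda\b I)$. For a rational $\ab\in P$ with denominator $s_0$, I would first relate $\tau(\lambda\b J_{\ab})$ to $\tau((\lambda/s_0)\b J_{\ab}^{?})$—more cleanly, use the rescaling behaviour of test ideals: $\tau(\mu\b J)$ for $J=J_{\ab}$ and the observation that $I^{s_0}\supseteq$ (a power related to) $J_{\ab}$ up to integral closure, together with the fact that $\tau$ is insensitive to integral closure in the relevant sense, to get $\tau(\lambda\b I)\supseteq\tau(\lambda\b \overline{I^{s_0}})^{\text{rescaled}}\supseteq \tau((\lambda s_0)\b I^{s_0})=\tau(\lambda\b I)$... the clean statement I would actually prove is: since $J_{\ab}\subseteq \overline{I^{s_0}}$ and $\tau(\lambda\b I)=\tau(\lambda\b\overline{I})$ for the powers involved, one has $\tau((\lambda/s_0)\b J_{\ab})\subseteq \tau(\lambda\b I)$, and the left side equals $\bigcap_i\pp_i^{(\lfloor(\lambda/s_0)s_0a_i\rfloor+1-\height(\pp_i))}=\bigcap_i\pp_i^{(\lfloor\lambda a_i\rfloor+1-\height(\pp_i))}$ by Theorem \ref{thm:main}. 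Summing over $\ab$ yields ``$\supseteq$''. The rescaling identity $\tau((\lambda/s_0)\b J)=\bigcap_i\pp_i^{(\lfloor\lambda a_i\rfloor+1-\height(\pp_i))}$ must be checked against the exact floor conventions, which is where care is needed.

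For the reverse inclusion ``$\subseteq$'', I would adapt the argument in the proof of Theorem \ref{thm:main} directly, rather than quoting it as a black box, because here $\StAss(I)$ may strictly contain $\{\pp_1,\dots,\pp_k\}$ and $\overline{I^s}$ is a sum, not an intersection. Start from $f\in\tau(\lambda\b I)$, so $f^q\in (I^{\lceil\lambda q\rceil})^{[1/q]}$-type containment; more precisely I would run the Proposition \ref{prop:generalinclusion} style estimate but \emph{localized at each $\pp_i$}: for $q\gg0$ one shows $I^{\lceil\lambda q\rceil}\subseteq\overline{I^{\lceil\lambda q\rceil}}=\sum_{\bb\in P}\bigcap_i\pp_i^{(\lceil\lambda q b_i\rceil)}$, and after localizing at $\pp_i$ the sum collapses to the single term maximizing the $i$-th coordinate, giving $(I^{\lceil\lambda q\rceil})S_{\pp_i}\subseteq (\pp_iS_{\pp_i})^{\lceil\lambda q\mu_i\rceil}$ where $\mu_i=\max\{b_i:\bb\in P\}$. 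Feeding this through Lemma \ref{lem:maxreg} as in Proposition \ref{prop:generalinclusion} yields $\tau(\lambda\b I)\subseteq\bigcap_i\pp_i^{(\lfloor\lambda\mu_i\rfloor+1-\height(\pp_i))}$, but this is \emph{weaker} than the claimed formula, so this naive approach does not directly close the gap.

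Therefore the real content of ``$\subseteq$'' is subtler, and I expect it to be the main obstacle: one must show that $\tau(\lambda\b I)$ is \emph{exactly} the sum over $\ab\in P$ of the $J_{\ab}$-contributions, which requires a matching upper bound. The cleanest route is to prove $\tau(\lambda\b I)\subseteq\overline{\tau(\lambda\b I)}$ lies inside the sum by a duality/colon argument, or—more promisingly—to observe that $\tau(\lambda\b I)$ itself has a primary-decomposition-type description forced by condition ($\ast$): since $\overline{I^s}$ is a sum of ideals each satisfying ($\diamond+$), and test ideals commute suitably with these operations, one gets $\tau(\lambda\b I)=\sum_{\ab}\tau((\lambda/s_0)\b J_{\ab})$ by showing both that each summand is contained (done above) and that $f\in\tau(\lambda\b I)$ implies $f$ lies in such a finite sum, using that for $q\gg0$ the element witnessing $f\in(I^{\lceil\lambda q\rceil})^{[1/q]}$ can be pushed into a \emph{single} $\bigcap_i\pp_i^{(\lceil\lambda qb_i\rceil)}$ by a pigeonhole on the finitely many vertices of $P$, combined with the $F^{q-1}$-multiplier trick from Theorem \ref{thm:main}'s proof applied to that vertex. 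I would carry out: (1) the vertex pigeonhole reducing to one $\bb$; (2) the $F^{q-1}f^q\in\pp_i^{(\cdots)}$ computation verbatim from Theorem \ref{thm:main} with $e_{\pp_i}$ replaced by $b_i s_0$; (3) the \GB-basis expansion over $S^{qq'}$ using $\init_\prec(F)$ square-free to conclude $f\in\tau$. Step (1)'s compatibility with the floor functions $\lfloor\lambda a_i\rfloor$ at non-vertex rational points of $P$ is the delicate point where I anticipate the argument needing the most care.
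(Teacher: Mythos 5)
Your proposal has genuine gaps in both directions. For ``$\supseteq$'', the reduction to Theorem \ref{thm:main} rests on the claim that condition ($\ast$) for $I$ ``forces'' $\overline{J_{\ab}^{\,s}}=\bigcap_i\pp_i^{(ss_0a_i)}$ for $J_{\ab}=\bigcap_i\pp_i^{(s_0a_i)}$. It does not: condition ($\ast$) is a statement about $\overline{I^s}$ being a \emph{sum} of such intersections, and says nothing about the integral closure of powers of any individual summand. (In the determinantal examples this happens to hold by Theorem \ref{thm:Dsigmadiamond}, but Theorem \ref{thm:main2} is stated for arbitrary ideals satisfying ($\ast$).) So Theorem \ref{thm:main} cannot be invoked for $J_{\ab}$, and the chain $\tau((\lambda/s_0)\b J_{\ab})\subseteq\tau(\lambda\b I)$ computes nothing. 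The correct route — which you in fact gesture at in your steps (2)--(3), but misfile under the ``$\subseteq$'' discussion — is to rerun the proof of Theorem \ref{thm:main} with the fixed vector $\ab$ in place of the $e_{\pp}(I)$'s: for $f\in\bigcap_i\pp_i^{(\lfloor\lambda a_i\rfloor+1-\height(\pp_i))}$ one gets $F^{q-1}f^q\in\bigcap_i\pp_i^{(\lceil q\lambda a_i\rceil)}\subseteq\overline{I^{q\lambda}}$ (only the containment of a single summand into the sum from \eqref{eq:ast} is needed here), then Brian\c con--Skoda and the square-free-initial-term expansion over $S^{qq'}$ give $f\in\tau(\lambda\b I)$. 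No auxiliary ideal $J_{\ab}$ and no rescaling identity are needed.

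For ``$\subseteq$'', you correctly diagnose that localizing at each $\pp_i$ separately only yields the weaker bound with $\mu_i=\max\{b_i:\bbb\in P\}$, but the repair you sketch (a ``pigeonhole on vertices'' pushing the witness into a single summand, combined with the $F^{q-1}$ trick) is not the right mechanism — the $F^{q-1}$ trick proves membership \emph{in} $\tau$, i.e.\ it belongs to the other inclusion, and no pigeonhole is required. The missing (and quite short) idea is to keep the sum structure intact: apply \eqref{eq:ast} at exponent $s=\lceil\lambda q\rceil$ to get
\[I^{\lceil\lambda q\rceil}\subseteq\overline{I^{\lceil\lambda q\rceil}}=\sum_{\ab\in P}\Bigl(\bigcap_{i=1}^k\pp_i^{(\lceil\lceil\lambda q\rceil a_i\rceil)}\Bigr),\]
and then show, one summand and one prime at a time via Lemma \ref{lem:maxreg} in the regular local ring $S_{\pp_i}$, that $\pp_i^{(\lceil\lceil\lambda q\rceil a_i\rceil)}\subseteq\bigl(\pp_i^{(\lfloor\lambda a_i\rfloor+1-\height(\pp_i))}\bigr)^{[q]}$ for $q\gg0$; flatness of Frobenius commutes $[q]$ with finite intersections and sums, so $I^{\lceil\lambda q\rceil}\subseteq\bigl(\sum_{\ab}\bigcap_i\pp_i^{(\lfloor\lambda a_i\rfloor+1-\height(\pp_i))}\bigr)^{[q]}$, and minimality of the $q$-th root finishes the argument. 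Your worry about non-vertex rational points is minor: only finitely many integer vectors $(\lfloor\lambda a_i\rfloor+1-\height(\pp_i))_i$ occur as $\ab$ ranges over $P$, so both sides are finite sums.
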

\begin{proof}
Let us fix $\lambda\in\RR_{>0}$. 
First let us focus on the inclusion ``$\subseteq$". For any $i\in\{1,\ldots ,k\}$ and $\ab=(a_1,\ldots ,a_k)$, since $I$ satisfies condition ($\ast$), it is enough to show that 
\[\pp_i^{\big(\big\lceil \lceil \lambda q\rceil a_i\big\rceil\big)} \subseteq \bigg(\pp_i^{(\lfloor \lambda a_i\rfloor +1-\height(\pp_i))}\bigg)^{[q]} \ \ \ \mbox{for } \ q\gg 0,\]
where $q$ is a power of $\chara(K)=p$. To see this, let us take $q=p^e$ and localize at $\pp_i$.
Because $S_{\pp_i}$ is a regular local ring of dimension $\height(\pp_i)$, by using Lemma \ref{lem:maxreg} we infer that
\begin{eqnarray*}
(\pp_i S_{\pp_i})^{\big(\big\lceil \lceil \lambda q\rceil a_i\big\rceil\big)} & \subseteq & (\pp_i S_{\pp_i})^{\lceil \lambda a_iq\rceil} \\ 
& \underset{q\gg 0}{\subseteq} & \bigg((\pp_i S_{\pp_i})^{\lfloor \lambda a_i\rfloor +1-\height(\pp_i)}\bigg)^{[q]}
\end{eqnarray*}
So, when $q\gg 0$ we obtain that:
\[(\pp_i S_{\pp_i})^{\big(\big\lceil \lceil \lambda q\rceil a_i\big\rceil\big)} \subseteq \bigg((\pp_i S_{\pp_i})^{\lfloor \lambda a_i\rfloor +1-\height(\pp_i)}\bigg)^{[q]}.\]
By the flatness of the Frobenius over $S$, by intersecting back with $S$ we get:
\[\pp_i^{\big(\big\lceil \lceil \lambda q\rceil a_i\big\rceil\big)} \subseteq \bigg(\pp_i^{(\lfloor \lambda a_i\rfloor +1-\height(\pp_i))}\bigg)^{[q]},\]
which is what we wanted.

Let us now focus on the other inclusion. For a vector $\ab=(a_1,\ldots ,a_k)\in P$, take 
\[f\in \bigcap_{i=1}^k\pp_i^{(\lfloor \lambda a_i\rfloor +1-\height(\pp))}.\] 
Consider $F$ and $\prec$ as in the definition of the condition ($\ast$). For any $i=1,\ldots ,k$ and $q=p^e$, notice that:
\begin{eqnarray*}
F^{q-1}\cdot f^q & \in & \bigg(\pp_i^{(\height(\pp_i))}\bigg)^{q-1} \cdot \bigg(\pp_i^{(\lfloor \lambda a_i\rfloor +1-\height(\pp_i))}\bigg)^{q} \\
& \subseteq & \pp_i^{((q-1)\height(\pp_i)+q(\lfloor \lambda a_i\rfloor +1-\height(\pp_i)))} \\
& = & \pp_i^{(q\lfloor \lambda a_i\rfloor +q-\height(\pp_i))} \\
& = & \pp_i^{\bigg(q\bigg(\lfloor \lambda a_i\rfloor +\frac{q-\height{\pp_i}}{q}\bigg)\bigg)}.
\end{eqnarray*}
If $q$ is big enough, then 
\[q\bigg(\lfloor \lambda a_i\rfloor +\frac{q-\height{\pp_i}}{q}\bigg)\geq \lceil q\lambda a_i\rceil.\]
By \cite[Proposition 2.14]{BMS:MMJ}, we can assume that $q\lambda$ is an integer, and so we will do from now on.
So, let us fix $q$ big enough so that 
\[F^{q-1}f^q\in \pp_i^{(\lceil q\lambda a_i\rceil)} \ \ \ \forall \ i\in\{1,\ldots ,k\}.\] 
Therefore, because $I$ satisfies condition ($\ast$)
\[F^{q-1}f^q\in \overline{I^{q\lambda}}.\]
Take a positive integer $k$ such that 
\[\bigg(\overline{I^{q\lambda}}\bigg)^{k+\ell}\subseteq I^{q\ell\lambda}  \ \ \ \forall \ \ell\in \NN.\]
In particular, if $q'$ a power of $p$, we have
\[F^{(q-1)(q'+k)}f^{q(q'+k)}\in I^{qq'\lambda}.\]
Let $\B_{qq'}$ be the basis of $S$ over $S^{qq'}$ consisting in monomials. Remembering that $q$ has been fixed, we can choose $q'$ big enough such that 
\[v:=\init_{\prec}(F^{(q-1)(q'+k)}f^{qk})=\init_{\prec}(F)^{(q-1)(q'+k)}\init_{\prec}(f)^{qk}\in\B_{qq'}.\]
In fact, it is enough to take $q'>qk(\deg(f)+1)$. So
\[F^{(q-1)(q'+k)}f^{qk}=v+\sum_{\substack{u\in\B_{qq'} \\ u\prec v}}g_u^{qq'}u.\]
Therefore, we get
\[F^{(q-1)(q'+k)}f^{q(q'+k)}=f^{qq'}v+\sum_{\substack{u\in\B_{qq'} \\ u\prec v}}(f g_u)^{qq'}u,\]
from which we deduce that $f\in (I^{\lceil qq'\lambda \rceil})^{[1/qq']}$ by using \cite[Proposition 2.5]{BMS:MMJ}. So
\[f\in \tau(\lambda\b I).\]
\end{proof}

Theorem \ref{thm:main2}, together with Example \ref{ex:detast}, has the following corollaries:

\begin{corollary}\label{cor:genericsum}
With the notation of \ref{sec:generic}, for all $\Sigma\subseteq \H_m$ we have
\[\tau\big(\lambda\b D(\Sigma)\big)=\sum_{\ab=(a_1,\ldots ,a_m)\in P_{\Sigma}}\left(\bigcap_{i=1}^mI_i^{(\lfloor \lambda a_i\rfloor +1-(m-i+1)(n-i+1))}\right) \ \ \ \forall \ \lambda\in \RR_{>0}.\]
Equivalently, by using Theorem \ref{thm:symb}, $\tau\big(\lambda\b D(\Sigma)\big)$ is generated by the products of minors of $X$ whose shape $\alpha$ satisfies:
\[\gamma_i(\alpha)\geq \lfloor\lambda a_i\rfloor +1-(m-i+1)(n-i+1) \ \ \mbox{for some } \ab=(a_1,\ldots ,a_m)\in P_{\Sigma} \mbox{ and } \forall \ i=1,\ldots ,m.\]
In particular (independently on the characteristic!):
\[\fpt(D(\Sigma))=\max_{\ab=(a_1,\ldots ,a_m)\in P_{\Sigma}}\bigg\{\min\bigg\{\frac{(m-i+1)(n-i+1)}{a_i}:i=1,\ldots ,m\bigg\}\bigg\}.\]
\end{corollary}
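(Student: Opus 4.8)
The plan is to deduce Corollary \ref{cor:genericsum} directly from Theorem \ref{thm:main2} by checking that each ideal $D(\Sigma)$, with $\Sigma\subseteq \H_m$, satisfies condition $(\ast)$. This verification is exactly Example \ref{ex:detast}: Theorem \ref{thm:icsumsofprod} combined with the observation $P_{\Sigma^s}=s\cdot P_{\Sigma}$ gives the required description
\[\overline{D(\Sigma)^s}=\sum_{\ab=(a_1,\ldots ,a_m)\in P_{\Sigma}}\left(\bigcap_{i=1}^mI_i^{(\lceil sa_i\rceil)}\right)\]
for all $s$ (not merely $s\gg 0$), with the prime ideals being $\pp_i=I_i$, and the pair $(F,\prec)$ being the product of principal minors $\Delta$ and the lexicographic order exhibited in Example \ref{ex:generic+}, whose initial term is the square-free monomial $\prod_{i,j}x_{ij}$ and which satisfies $\Delta\in I_t^{(\height I_t)}$ for all $t$. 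Once $(\ast)$ is in place, one applies Theorem \ref{thm:main2} verbatim, substituting $\pp_i=I_i$, $\height(I_i)=(m-i+1)(n-i+1)$ (the classical height formula for determinantal ideals), which yields the displayed formula for $\tau(\lambda\b D(\Sigma))$.

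Next I would translate the symbolic-power formula into the statement about products of minors. An element $f$ lies in $\bigcap_{i=1}^m I_i^{(\ell_i)}$, with $\ell_i=\lfloor\lambda a_i\rfloor+1-(m-i+1)(n-i+1)$, if and only if (after noting that the relevant ideals here are generated by products of minors, being integrally closed, and using standard monomial theory) it is a combination of products of minors each of which belongs to every $I_i^{(\ell_i)}$; by Theorem \ref{thm:symb} a product of minors of shape $\alpha$ belongs to $I_i^{(\ell_i)}$ precisely when $\gamma_i(\alpha)\geq \ell_i$. Summing over $\ab\in P_{\Sigma}$ then gives the stated generating set: the products of minors whose shape $\alpha$ satisfies $\gamma_i(\alpha)\geq\lfloor\lambda a_i\rfloor+1-(m-i+1)(n-i+1)$ for all $i$, for some $\ab\in P_{\Sigma}$.

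Finally, for the $\fpt$ formula I would use $\fpt(D(\Sigma))=\min\{\lambda:\tau(\lambda\b D(\Sigma))\neq S\}$. The ideal $\tau(\lambda\b D(\Sigma))$ fails to be the unit ideal exactly when, for \emph{every} $\ab\in P_{\Sigma}$, at least one exponent $\lfloor\lambda a_i\rfloor+1-(m-i+1)(n-i+1)$ is positive, i.e. $\lambda a_i\geq (m-i+1)(n-i+1)$ for some $i$ (using that $\lfloor t\rfloor+1>0\iff t\geq 0$ for the borderline, together with the fact that $I_i^{(0)}=S$ and $I_i^{(\ell)}\neq S$ for $\ell\geq 1$, and that a sum of proper ideals of this determinantal type is proper). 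Hence $\tau(\lambda\b D(\Sigma))=S$ iff there exists $\ab\in P_{\Sigma}$ with $\lambda<(m-i+1)(n-i+1)/a_i$ for all $i$, i.e. $\lambda<\max_{\ab\in P_{\Sigma}}\min_i (m-i+1)(n-i+1)/a_i$; taking the infimum of the complementary set gives the claimed value, and since $P_{\Sigma}$ is a polytope the max and min are attained, so the quantity is well defined and independent of the characteristic (as every ingredient — $P_{\Sigma}$, the heights, the $\gamma$-functions — is). The one point needing a little care, which I regard as the main obstacle, is justifying that the right-hand side in the displayed $\tau$-formula is a proper ideal exactly under the stated numerical condition: one must argue that a finite sum $\sum_{\ab}\bigcap_i I_i^{(\ell_i(\ab))}$ equals $S$ if and only if some individual summand does, which follows because each $\bigcap_i I_i^{(\ell_i)}$ is either $S$ (all $\ell_i\leq 0$) or is contained in the proper ideal $I_1$ (whenever some $\ell_i\geq 1$, since $I_i^{(\ell_i)}\subseteq I_i\subseteq I_1$ — here using $I_1\supseteq I_2\supseteq\cdots$), so any sum of non-unit summands stays inside $I_1\subsetneq S$.
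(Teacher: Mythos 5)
Your proposal is correct and follows exactly the paper's route: the paper derives this corollary by combining Theorem \ref{thm:main2} with Example \ref{ex:detast} (condition $(\ast)$ for $D(\Sigma)$ via Theorem \ref{thm:icsumsofprod} and the pair $(\Delta,\prec)$ of Example \ref{ex:generic+}), with $\pp_i=I_i$ and $\height(I_i)=(m-i+1)(n-i+1)$. Your additional bookkeeping for the products-of-minors reformulation and the $\fpt$ computation is sound and merely makes explicit what the paper leaves implicit.
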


\begin{corollary}\label{cor:symmetricsum}
With the notation of \ref{sec:symmetric}, $\Sigma\subseteq \H_n$ we have
\[\tau\big(\lambda\b E(\Sigma)\big)=\sum_{\ab=(a_1,\ldots ,a_n)\in P_{\Sigma}}\left(\bigcap_{i=1}^nJ_i^{(\lfloor \lambda a_i\rfloor +1-\binom{n-i+2}{2})}\right) \ \ \ \forall \ \lambda\in \RR_{>0}.\]
Equivalently, by using Theorem \ref{thm:symbsym}, $\tau\big(\lambda\b E(\Sigma)\big)$ is generated by the products of minors of $Y$ whose shape $\alpha$ satisfies:
\[\gamma_i(\alpha)\geq \lfloor\lambda a_i\rfloor +1-\binom{n-i+2}{2} \ \ \mbox{for some } \ab=(a_1,\ldots ,a_n)\in P_{\Sigma} \mbox{ and } \forall \ i=1,\ldots ,n.\]
In particular (independently on the characteristic!):
\[\fpt(E(\Sigma))=\max_{\ab=(a_1,\ldots ,a_n)\in P_{\Sigma}}\bigg\{\min\bigg\{\frac{\binom{n-i+2}{2}}{a_i}:i=1,\ldots ,n\bigg\}\bigg\}.\]
\end{corollary}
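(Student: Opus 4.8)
The plan is to deduce this statement directly from Theorem \ref{thm:main2} applied to the ideal $I=E(\Sigma)$. First I would invoke Example \ref{ex:detast}, which shows that $E(\Sigma)$ satisfies condition ($\ast$): the relevant primes are $\pp_i=J_i$ for $i=1,\ldots,n$, the polytope is $P=P_{\Sigma}\subseteq\RR^n$, and the pair $(F,\prec)$ is the one produced in Example \ref{ex:symmetric+}, namely $F=\Delta$, the product of all principal upper--diagonal minors of $Y$, together with the lexicographic order extending $y_{11}>y_{12}>\cdots>y_{nn}$, for which $\init_{\prec}(\Delta)=\prod_{1\le i\le j\le n}y_{ij}$ is squarefree and $\Delta\in J_t^{(\height(J_t))}$ for every $t$. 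Recalling in addition that $\height(J_t)=\binom{n-t+2}{2}$, substituting these data into the formula of Theorem \ref{thm:main2} yields at once
\[
\tau\big(\lambda\b E(\Sigma)\big)=\sum_{\ab=(a_1,\ldots,a_n)\in P_{\Sigma}}\left(\bigcap_{i=1}^n J_i^{(\lfloor\lambda a_i\rfloor+1-\binom{n-i+2}{2})}\right),
\]
which is the first assertion, with the convention that a symbolic power with exponent $\le 0$ is read as the unit ideal $S$.

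Next I would translate the right-hand side into the language of shapes using Theorem \ref{thm:symbsym}. That theorem says $J_i^{(c)}$ is generated by the products of minors of $Y$ whose shape $\alpha$ satisfies $\gamma_i(\alpha)\ge c$; invoking the straightening-law (ASL) structure of $S=K[Y]$ recalled in Section \ref{sec:basics} — which guarantees that a finite intersection of ideals each generated by products of minors is again generated by the products of minors lying in all of them — one gets that $\bigcap_{i=1}^n J_i^{(c_i)}$ is generated by the products of minors whose shape $\alpha$ satisfies $\gamma_i(\alpha)\ge c_i$ for every $i$. Summing over $\ab\in P_{\Sigma}$ gives the stated description of $\tau\big(\lambda\b E(\Sigma)\big)$ as generated by the products of minors of $Y$ whose shape $\alpha$ obeys $\gamma_i(\alpha)\ge\lfloor\lambda a_i\rfloor+1-\binom{n-i+2}{2}$ for all $i$ and some $\ab\in P_{\Sigma}$.

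Finally, for the $F$-pure threshold I would argue that, since each summand $\bigcap_{i=1}^n J_i^{(\lfloor\lambda a_i\rfloor+1-\binom{n-i+2}{2})}$ is either $S$ or a homogeneous ideal contained in $\mm$, the sum $\tau\big(\lambda\b E(\Sigma)\big)$ equals $S$ exactly when some summand does, i.e. when there is $\ab\in P_{\Sigma}$ with $\lfloor\lambda a_i\rfloor+1-\binom{n-i+2}{2}\le 0$ for every $i$; because $\binom{n-i+2}{2}-1$ is an integer this is equivalent to $\lambda a_i<\binom{n-i+2}{2}$ for every $i$. Hence $\tau\big(\lambda\b E(\Sigma)\big)\ne S$ iff for every $\ab\in P_{\Sigma}$ there is an index $i$ with $\lambda\ge\binom{n-i+2}{2}/a_i$, and taking the least such $\lambda$ gives
\[
\fpt(E(\Sigma))=\max_{\ab=(a_1,\ldots,a_n)\in P_{\Sigma}}\left\{\min\left\{\frac{\binom{n-i+2}{2}}{a_i}:i=1,\ldots,n\right\}\right\},
\]
with the convention $\binom{n-i+2}{2}/0=+\infty$ so that coordinates with $a_i=0$ impose no constraint; the maximum is attained since $P_{\Sigma}$ is compact and $\ab\mapsto\min_i\binom{n-i+2}{2}/a_i$ is upper semicontinuous. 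Independence of $\chara(K)$ is immediate because none of $\height(J_i)$, $\gamma_i$, or $P_{\Sigma}$ involves the characteristic. The only step needing genuine care — and thus the main, if mild, obstacle — is the passage from symbolic powers to products of minors for the intersection $\bigcap_i J_i^{(c_i)}$, where one must appeal to the ASL machinery and correctly handle the degenerate exponents $c_i\le 0$; the rest is a direct specialization of Theorem \ref{thm:main2}.
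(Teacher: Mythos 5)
Your proposal is correct and follows exactly the paper's route: the paper derives this corollary precisely by specializing Theorem \ref{thm:main2} to $E(\Sigma)$ via Example \ref{ex:detast} (with the pair $(\Delta,\prec)$ of Example \ref{ex:symmetric+} and $\height(J_i)=\binom{n-i+2}{2}$). The additional details you supply — the translation of $\bigcap_i J_i^{(c_i)}$ into products of minors via Theorem \ref{thm:symbsym} and the reading-off of $\fpt$ from when a summand equals $S$ — are exactly the steps the paper leaves implicit.
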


\begin{corollary}\label{cor:skew-symmetricsum}
With the notation of \ref{sec:skew-symmetric}, $\Sigma\subseteq \H_{\lfloor n/2\rfloor}$ we have
\[\tau\big(\lambda\b F(\Sigma)\big)=\sum_{\ab=(a_1,\ldots ,a_{\lfloor n/2\rfloor})\in P_{\Sigma}}\left(\bigcap_{i=1}^{\lfloor n/2\rfloor}P_{2i}^{(\lfloor \lambda a_i\rfloor +1-(n/2-i+1)(n-2i+1))}\right) \ \ \ \forall \ \lambda\in \RR_{>0}.\]
Equivalently, by using Theorem \ref{thm:symbpf}, $\tau\big(\lambda\b F(\Sigma)\big)$ is generated by the products of Pfaffians of $Z$ whose shape $\alpha$ satisfies:
\[\gamma_i(\alpha)\geq \lfloor\lambda a_i\rfloor +1-(n/2-i+1)(n-2i+1) \ \ \mbox{for some } \ab=(a_1,\ldots ,a_{\lfloor n/2\rfloor})\in P_{\Sigma} \mbox{ and } \forall \ i=1,\ldots ,\lfloor n/2\rfloor.\]
In particular (independently on the characteristic!):
\[\fpt(F(\Sigma))=\max_{\ab=(a_1,\ldots ,a_{\lfloor n/2\rfloor})\in P_{\Sigma}}\bigg\{\min\bigg\{\frac{(n/2-i+1)(n-2i+1)}{a_i}:i=1,\ldots ,\lfloor n/2\rfloor\bigg\}\bigg\}.\]
\end{corollary}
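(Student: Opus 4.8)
The strategy is to obtain this corollary as a direct specialization of Theorem~\ref{thm:main2}, the only work being to identify the data entering condition~$(\ast)$ for the ideal $F(\Sigma)$ and then to translate symbolic powers of Pfaffian ideals into products of Pfaffians.

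First I would invoke Example~\ref{ex:detast}, which (building on Example~\ref{ex:skew-symmetric+}) shows that $F(\Sigma)$ satisfies condition~$(\ast)$ with prime ideals $\pp_i = P_{2i}$ for $i = 1, \dots, \lfloor n/2\rfloor$, polytope $P = P_\Sigma \subseteq \RR^{\lfloor n/2\rfloor}$, and with the explicit pair $(\Delta, \prec)$ produced in Example~\ref{ex:skew-symmetric+}; from the same example I would record $\height(P_{2i}) = (n/2 - i + 1)(n - 2i + 1)$. Plugging these into the conclusion of Theorem~\ref{thm:main2} gives the first displayed formula for $\tau(\lambda\b F(\Sigma))$ word for word.

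For the ``equivalently'' reformulation I would combine Theorem~\ref{thm:symbpf} with standard monomial theory in $K[Z]$: for any integers $c_1, \dots, c_{\lfloor n/2\rfloor}$ the ideal $\bigcap_i P_{2i}^{(c_i)}$ is generated by the products of Pfaffians whose shape $\alpha$ satisfies $\gamma_i(\alpha)\geq c_i$ for \emph{all} $i$ simultaneously. The inclusion ``$\supseteq$'' is immediate from Theorem~\ref{thm:symbpf}; the reverse inclusion is the point that needs care, and it follows because each $P_{2i}^{(c_i)}$ is spanned by the standard products of Pfaffians of appropriate shape, while the intersection of ideals spanned by shape-closed sets of standard monomials is spanned by the intersection of those sets --- this is precisely the combinatorial input already used in \cite{AD} and in the proof of Theorem~\ref{thm:icsumsofprodpf}. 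Summing over $\ab \in P_\Sigma$ the generators of $\bigcap_i P_{2i}^{(\lfloor \lambda a_i\rfloor + 1 - (n/2 - i + 1)(n - 2i + 1))}$ then yields the asserted generating set.

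Finally, for the $\fpt$ formula I would set $h_i := (n/2 - i + 1)(n - 2i + 1)$ and unwind $\fpt(F(\Sigma)) = \min\{\lambda : \tau(\lambda\b F(\Sigma))\neq S\}$ from the first formula. A summand $\bigcap_i P_{2i}^{(\lfloor \lambda a_i\rfloor + 1 - h_i)}$ equals $S$ iff $\lfloor \lambda a_i\rfloor + 1 - h_i \leq 0$ for every $i$, i.e.\ iff $\lambda a_i < h_i$ for every $i$ (indices with $a_i = 0$ being automatic since $h_i \geq 1$). Hence $\tau(\lambda\b F(\Sigma)) = S$ iff some $\ab \in P_\Sigma$ satisfies $\lambda < \min_i\{h_i/a_i\}$ (with the convention $h_i/0 = +\infty$), i.e.\ iff $\lambda < \max_{\ab\in P_\Sigma}\min_i\{h_i/a_i\}$; since $P_\Sigma$ is a compact polytope not containing the origin, this maximum is finite and attained, and the stated value of $\fpt(F(\Sigma))$ drops out. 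The characteristic-independence is then tautological, since the right-hand sides depend only on $n$, $\Sigma$, and the $\gamma$-functions. The only genuine obstacle, as noted, is the shape-wise description of $\bigcap_i P_{2i}^{(c_i)}$; everything else is bookkeeping.
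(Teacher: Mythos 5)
Your proposal is correct and matches the paper's route exactly: the paper derives this corollary directly from Theorem~\ref{thm:main2} applied with the data of Example~\ref{ex:detast} (primes $P_{2i}$, polytope $P_\Sigma$, and the pair $(\Delta,\prec)$ from Example~\ref{ex:skew-symmetric+}), with the reformulation and the $\fpt$ value read off via Theorem~\ref{thm:symbpf}. The extra bookkeeping you supply for the shape-wise description of $\bigcap_i P_{2i}^{(c_i)}$ and for unwinding the $\fpt$ formula is sound and is exactly what the paper leaves implicit.
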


Now, we are ready to state the explicit formulas for the multiplier ideals of any $G$-invariant ideal in $\Sym(V\otimes W)$, in $\Sym(\Sym^2V)$ and in $\Sym(\bigwedge^2V)$, (recalled in Sections \ref{sec:generic}, \ref{sec:symmetric} and \ref{sec:skew-symmetric}). 

\begin{thm}\label{thm:mult}
Let $K$ be a field of characteristic $0$, $\Sigma\subseteq \P_m$ and $P\subseteq \RR^m$ be the convex hull of the set $\{(\gamma_1(\tl\sigma), \ldots ,\gamma_m(\tl\sigma)):\sigma\in\Sigma\}$. Then the ideal $I(\Sigma)\subseteq \Sym(V\otimes W)$ has multiplier ideals given by:
\[\Jc\big(\lambda\b I(\Sigma)\big)=\sum_{\ab=(a_1,\ldots ,a_m)\in P}\left(\bigcap_{i=1}^mI_i^{(\lfloor \lambda a_i\rfloor +1-(m-i+1)(n-i+1))}\right) \ \ \ \forall \ \lambda\in \RR_{>0}.\]
Equivalently, by using Theorem \ref{thm:symb}, $\Jc\big(\lambda\b I(\Sigma)\big)$ is generated by the products of minors of $X$ whose shape $\alpha$ satisfies:
\[\gamma_i(\alpha)\geq \lfloor\lambda a_i\rfloor +1-(m-i+1)(n-i+1) \ \ \mbox{for some } \ab=(a_1,\ldots ,a_m)\in P_{\Sigma} \mbox{ and } \forall \ i=1,\ldots ,m.\]
In particular:
\[\lct(I(\Sigma))=\max_{\ab=(a_1,\ldots ,a_m)\in P_{\Sigma}}\bigg\{\min\bigg\{\frac{(m-i+1)(n-i+1)}{a_i}:i=1,\ldots ,m\bigg\}\bigg\}.\]
\end{thm}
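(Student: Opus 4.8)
The plan is to reduce Theorem \ref{thm:mult} to the positive-characteristic statement of Theorem \ref{thm:main2}, applied to the ideals $D(\Sigma)$ (or rather $D(\tl\Sigma)$) studied in Corollary \ref{cor:genericsum}, via the Hara--Yoshida comparison \eqref{eq:HY} between multiplier ideals and test ideals. The three inputs I expect to combine are: (a) Theorem \ref{thm:relinvdet}, which tells us that in characteristic $0$ the ideal $I(\Sigma)$ has the same integral closure as $D(\tl\Sigma)$, where $\tl\Sigma=\{\tl\sigma:\sigma\in\Sigma\}$; (b) the fact that multiplier ideals depend only on the integral closure of the ideal (so $\Jc(\lambda\b I(\Sigma))=\Jc(\lambda\b D(\tl\Sigma))$ for all $\lambda$); and (c) Corollary \ref{cor:genericsum} together with \eqref{eq:HY}, which computes the reduction mod $p$ of $\Jc(\lambda\b D(\tl\Sigma))$ for $p\gg 0$. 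Note that the polytope $P$ in the statement is precisely $P_{\tl\Sigma}$, the convex hull of $\{(\gamma_1(\tl\sigma),\ldots,\gamma_m(\tl\sigma)):\sigma\in\Sigma\}$, so the right-hand sides match on the nose.

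First I would observe that, for a fixed $\lambda\in\RR_{>0}$, both sides of the claimed equality are ideals of $S=K[X]=\Sym(V\otimes W)$ that can be defined over a finitely generated $\ZZ$-subalgebra $A\subseteq K$: the left side because a log-resolution and the relevant divisors spread out over such an $A$ (this is the standard ``reduction mod $p$'' setup underlying \eqref{eq:HY}), and the right side because it is a finite sum of finite intersections of symbolic powers of determinantal ideals, each of which is defined over $\ZZ$. Then two ideals in $K[X]$ agree if and only if their reductions modulo $\mathfrak{p}$ agree for all $\mathfrak{p}$ in a dense set of closed points of $\Spec A$ — in particular it suffices to check equality after reduction mod $p$ for infinitely many primes $p$. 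So fix $\lambda$ and choose $p\gg 0$ (large enough for \eqref{eq:HY} to apply to the ideal $D(\tl\Sigma)$, and large enough that all the structural results of Section \ref{sec:generic}, which are characteristic-free, behave well).

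Next: by the fact that multiplier ideals are insensitive to integral closure and by Theorem \ref{thm:relinvdet}, $\Jc(\lambda\b I(\Sigma))=\Jc(\lambda\b D(\tl\Sigma))$ in characteristic $0$. Reducing mod $p$ and applying \eqref{eq:HY} gives $\Jc(\lambda\b I(\Sigma))_p=\tau(\lambda\b D(\tl\Sigma)_p)$ for $p\gg 0$. Now $D(\tl\Sigma)$ satisfies condition ($\ast$) by Example \ref{ex:detast} (with the prime ideals $\pp_i=I_i$, the polytope $P_{\tl\Sigma}$, and the polynomial $\Delta$ and term ordering of Example \ref{ex:generic+}), so Theorem \ref{thm:main2} — equivalently, Corollary \ref{cor:genericsum} applied to the set $\tl\Sigma$ — yields
\[
\tau\big(\lambda\b D(\tl\Sigma)\big)=\sum_{\ab=(a_1,\ldots,a_m)\in P_{\tl\Sigma}}\left(\bigcap_{i=1}^m I_i^{(\lfloor\lambda a_i\rfloor+1-(m-i+1)(n-i+1))}\right),
\]
using $\height(I_i)=(m-i+1)(n-i+1)$. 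Since $P=P_{\tl\Sigma}$, the reduction mod $p$ of the conjectured right-hand side of Theorem \ref{thm:mult} equals this test ideal; hence the reductions mod $p$ of both sides of Theorem \ref{thm:mult} agree for all $p\gg 0$, and therefore the two ideals agree in characteristic $0$. The equivalent ``products of minors'' reformulation then follows by applying Theorem \ref{thm:symb} to each symbolic power $I_i^{(\cdot)}$, and the log-canonical threshold formula follows by reading off the least $\lambda$ for which the displayed ideal is proper, exactly as $\fpt$ was extracted in Corollary \ref{cor:genericsum} (with $\lct$ in place of $\fpt$ via $\lct(I)=\lim_{p\to\infty}\fpt(I_p)$).

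The main obstacle is the bookkeeping in step two: making precise that ``$\Jc(\lambda\b I)_p = \tau(\lambda\b I_p)$ for $p\gg 0$'' can be combined with ``the RHS is defined over $\ZZ$ and commutes with reduction mod $p$'' to conclude an honest equality of ideals in the characteristic-$0$ ring. One must be slightly careful that $p\gg 0$ is allowed to depend on $\lambda$ (it does, in \eqref{eq:HY}), but since we fix $\lambda$ first and only need the equality of two $\lambda$-indexed ideals, this is harmless — for each $\lambda$ we get the equality, and that is all that is claimed. A secondary, purely notational point is to match $\tl\Sigma$ against the polytope in the statement: one checks directly from \eqref{eq:gammafunctions} that $P_{\tl\Sigma}$ is exactly the convex hull of $\{(\gamma_1(\tl\sigma),\ldots,\gamma_m(\tl\sigma)):\sigma\in\Sigma\}$, which is how $P$ is defined, so no transposition mismatch arises. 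Everything else is a direct citation of results already in the excerpt.
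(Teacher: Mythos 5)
Your proposal is correct and follows essentially the same route as the paper: replace $I(\Sigma)$ by $D(\tl\Sigma)$ using Theorem \ref{thm:relinvdet} and the invariance of multiplier ideals under integral closure, pass to characteristic $p\gg 0$ via \eqref{eq:HY}, compute the test ideal by Corollary \ref{cor:genericsum}, and observe that the answer is independent of $p$ and that $P=P_{\tl\Sigma}$. Your added care about spreading both sides out over a finitely generated $\ZZ$-algebra and about $p\gg 0$ depending on the fixed $\lambda$ only makes explicit what the paper leaves implicit.
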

\begin{proof}
By Theorem \ref{thm:relinvdet} we have 
\[\overline{I(\Sigma)}=\overline{D(\tl\Sigma)},\]
where $\tl\Sigma=\{\tl\sigma:\sigma\in\Sigma\}$. So we have:
\[\Jc\big(\lambda\b I(\Sigma)\big)=\Jc\big(\lambda\b D(\tl\Sigma)\big)\]
(cf. \cite[Corollary 9.6.17]{La}).
However, we defined the ideal $D(\tl\Sigma)$ also in positive characteristic $p$ (where it is the reduction mod $p$ of $D(\tl\Sigma)$ viewed in characteristic 0). If $_p$ denotes the reduction mod $p$, by \eqref{eq:HY} we therefore obtain: 
\begin{equation}\label{eq:multest}
\Jc\big(\lambda\b D(\tl\Sigma)\big)_p=\tau\big(\lambda\b D(\tl\Sigma)_p\big)
\end{equation}
for $p\gg 0$ (a priori depending on $\lambda$). Therefore, by Corollary \ref{cor:genericsum}, a product of minors of shape $\sigma$ in $S$ belongs to $\Jc\big(\lambda\b D(\tl\Sigma)\big)_p$ if and only if there exists $a=(a_1,\ldots ,a_m)\in P_{\tl\Sigma}$ such that $\gamma_i(\sigma)\geq \lambda a_i+1-(m-i+1)(n-i+1)$ for all $i=1,\ldots ,m$ (independently on $p$). This implies that the multiplier ideal $\Jc\big(\lambda\b I(\Sigma)\big)=\Jc\big(\lambda\b D(\tl\Sigma)\big)$ is generated by the product of minors above, and because $P=P_{\tl\Sigma}$ the thesis follow.
\end{proof}

The same proof as above yields the analog result for $\Sym(\Sym^2V)$ and in $\Sym(\bigwedge^2V)$:

\begin{thm}\label{thm:multsym}
Let $K$ be a field of characteristic $0$, $\Sigma\subseteq \P_m$ and $P'\subseteq \RR^n$ be the convex hull of the set $\{(\gamma_1(\tl\sigma'), \ldots ,\gamma_n(\tl\sigma')):\sigma\in\Sigma\}$, where $\sigma'_i=\tl\sigma_{2i}$. Then the ideal $J(\Sigma)\subseteq \Sym(\Sym^2V)$ has multiplier ideals given by:
\[\Jc\big(\lambda\b J(\Sigma)\big)=\sum_{\ab=(a_1,\ldots ,a_n)\in P'}\left(\bigcap_{i=1}^nJ_i^{(\lfloor \lambda a_i\rfloor +1-\binom{n-i+2}{2})}\right) \ \ \ \forall \ \lambda\in \RR_{>0}.\]
Equivalently, by using Theorem \ref{thm:symbsym}, $\Jc\big(\lambda\b J(\Sigma)\big)$ is generated by the products of minors of $Y$ whose shape $\alpha$ satisfies:
\[\gamma_i(\alpha)\geq \lfloor\lambda a_i\rfloor +1-\binom{n-i+2}{2} \ \ \mbox{for some } \ab=(a_1,\ldots ,a_n)\in P_{\Sigma} \mbox{ and } \forall \ i=1,\ldots ,n.\]
In particular:
\[\lct(J(\Sigma))=\max_{\ab=(a_1,\ldots ,a_n)\in P_{\Sigma}}\bigg\{\min\bigg\{\frac{\binom{n-i+2}{2}}{a_i}:i=1,\ldots ,n\bigg\}\bigg\}.\]
\end{thm}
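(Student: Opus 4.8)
The plan is to reproduce, essentially word for word, the argument that proved Theorem \ref{thm:mult}, with the generic matrix replaced by the symmetric one. The key point is that multiplier ideals depend only on the integral closure of the ideal, and that the integral closure of $J(\Sigma)$ is the integral closure of a sum of products of determinantal ideals of a symmetric matrix, whose test ideals in positive characteristic have already been computed in Corollary \ref{cor:symmetricsum}. First I would invoke Abeasis's relation recalled at the end of Section \ref{sec:symmetric}, namely $\overline{J(\Sigma)}=\overline{E(\Sigma')}$, where $\Sigma'=\{\sigma':\sigma\in\Sigma\}$ and $\sigma'_i=\tl\sigma_{2i}$. Since two ideals with the same integral closure share the same log-resolutions, hence the same multiplier ideals (cf. \cite[Corollary 9.6.17]{La}), this gives $\Jc(\lambda\b J(\Sigma))=\Jc(\lambda\b E(\Sigma'))$ for every $\lambda\in\RR_{>0}$.

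Next, observe that $E(\Sigma')$ is defined over $\ZZ$: in characteristic $p$ it is precisely the reduction mod $p$ of its characteristic-$0$ incarnation, so Hara--Yoshida's comparison \eqref{eq:HY} yields $\Jc(\lambda\b E(\Sigma'))_p=\tau(\lambda\b E(\Sigma')_p)$ for all $p\gg 0$ (a priori depending on $\lambda$). Now apply Corollary \ref{cor:symmetricsum}: the right-hand side equals $\sum_{\ab\in P_{\Sigma'}}\bigl(\bigcap_{i}J_i^{(\lfloor\lambda a_i\rfloor+1-\binom{n-i+2}{2})}\bigr)$, and by Theorem \ref{thm:symbsym} it is generated by the products of minors of $Y$ whose shape $\alpha$ satisfies $\gamma_i(\alpha)\ge\lfloor\lambda a_i\rfloor+1-\binom{n-i+2}{2}$ for some $\ab\in P_{\Sigma'}$ and all $i$ --- a description that does not mention $p$. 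Since for infinitely many $p$ the reduction mod $p$ of the fixed characteristic-$0$ ideal $\Jc(\lambda\b E(\Sigma'))$ is generated by exactly these products of minors, the same must hold in characteristic $0$; unwinding the definitions so that the polytope $P'$ in the statement coincides with $P_{\Sigma'}$ then gives the claimed formula, and the formula for $\lct(J(\Sigma))$ follows by reading off the smallest $\lambda$ for which the right-hand side is proper.

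The only steps that are not a literal transcription of the proof of Theorem \ref{thm:mult} are: (i) confirming that $E(\Sigma')$ really is a well-behaved reduction mod $p$ --- which it is, since its defining data (the entries of a symmetric matrix and the combinatorics of $\Sigma'$) are characteristic-free and Theorem \ref{thm:icsumsofprodsym} holds in all characteristics --- so that Hara--Yoshida applies uniformly and the base-independence of the generating set lets us transport the formula back to characteristic $0$ despite the $\lambda$-dependence of the threshold $p\gg 0$; and (ii) the transpose bookkeeping identifying $P'$ with $P_{\Sigma'}$, i.e.\ chasing the correspondence $\sigma\mapsto\sigma'$ together with the $\gamma_t$-functions and the constraint $\P_n\cap\Rc_e$. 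I expect (ii) to be the only place that requires genuine care; everything else is mechanical. The analogous statement for $\Sym(\bigwedge^2V)$ (Theorem \ref{thm:multpf}) follows the same template, using instead the Pfaffian relation $\overline{P(\Sigma)}=\overline{F(\widetilde{\Sigma})}$, Corollary \ref{cor:skew-symmetricsum} and Theorem \ref{thm:symbpf}.
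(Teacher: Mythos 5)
Your proposal is correct and follows exactly the route the paper intends: the paper's own ``proof'' of Theorem \ref{thm:multsym} is simply the remark that the argument for Theorem \ref{thm:mult} carries over verbatim, i.e.\ replace $\overline{I(\Sigma)}=\overline{D(\tl\Sigma)}$ by Abeasis's relation $\overline{J(\Sigma)}=\overline{E(\Sigma')}$, use the invariance of multiplier ideals under integral closure, apply Hara--Yoshida together with Corollary \ref{cor:symmetricsum}, and transport the characteristic-free description of the generators back to characteristic $0$. Your two flagged points of care (the uniform applicability of reduction mod $p$ and the bookkeeping identifying $P'$ with $P_{\Sigma'}$) are exactly the right ones, and both check out.
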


\begin{thm}\label{thm:multpf}
Let $K$ be a field of characteristic $0$, $\Sigma\subseteq \P_{\lfloor n/2\rfloor}\cap \Cc_e$ and $\widetilde{P}\subseteq \RR^n$ be the convex hull of the set $\{(\gamma_1(\widetilde{\sigma}), \ldots ,\gamma_m(\widetilde{\sigma})):\sigma\in\Sigma\}$, where $\widetilde{\sigma}_i=\tl\sigma_{i}/2$. Then the ideal $P(\Sigma)\subseteq \Sym(\bigwedge^2V)$ has multiplier ideals given by
\[\Jc\big(\lambda\b P(\Sigma)\big)=\sum_{\ab=(a_1,\ldots ,a_{\lfloor n/2\rfloor})\in \widetilde{P}}\left(\bigcap_{i=1}^{\lfloor n/2\rfloor}P_{2i}^{(\lfloor \lambda a_i\rfloor +1-(n/2-i+1)(n-2i+1))}\right) \ \ \ \forall \ \lambda\in \RR_{>0}.\]
Equivalently, by using Theorem \ref{thm:symbpf}, $\Jc\big(\lambda\b P(\Sigma)\big)$ is generated by the products of Pfaffians of $Z$ whose shape $\alpha$ satisfies:
\[\gamma_i(\alpha)\geq \lfloor\lambda a_i\rfloor +1-(n/2-i+1)(n-2i+1) \ \ \mbox{for some } \ab=(a_1,\ldots ,a_{\lfloor n/2\rfloor})\in P_{\Sigma} \mbox{ and } \forall \ i=1,\ldots ,\lfloor n/2\rfloor.\]
In particular:
\[\lct(P(\Sigma))=\max_{\ab=(a_1,\ldots ,a_{\lfloor n/2\rfloor})\in P_{\Sigma}}\bigg\{\min\bigg\{\frac{(n/2-i+1)(n-2i+1)}{a_i}:i=1,\ldots ,\lfloor n/2\rfloor\bigg\}\bigg\}.\]
\end{thm}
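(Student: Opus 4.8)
The plan is to mimic exactly the proof of Theorem \ref{thm:mult}, only replacing the representation-theoretic input for the generic matrix by the corresponding input for the skew-symmetric matrix. First I would recall from Theorem \ref{thm:relinvdet}'s Pfaffian analogue (the last cited theorem of \cite{AD}) that for $\Sigma \subseteq \P_{\lfloor n/2\rfloor}\cap \Cc_e$ one has $\overline{P(\Sigma)} = \overline{F(\widetilde{\Sigma})}$, where $\widetilde{\Sigma} = \{\widetilde{\sigma} : \sigma \in \Sigma\}$ and $\widetilde{\sigma}_i = \tl\sigma_i/2$. Since multiplier ideals depend only on the integral closure of an ideal (cf. \cite[Corollary 9.6.17]{La}), this gives $\Jc(\lambda\b P(\Sigma)) = \Jc(\lambda\b F(\widetilde{\Sigma}))$ in characteristic $0$.

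Next I would invoke the reduction to positive characteristic. The ideal $F(\widetilde{\Sigma})$ makes sense over any field: it is built purely from Pfaffian ideals of the skew-symmetric matrix $Z$, whose definition is characteristic-free; so reducing mod a prime $p$ commutes with forming $F(\widetilde{\Sigma})$. By the Hara--Yoshida comparison \eqref{eq:HY}, for $p \gg 0$ (depending on $\lambda$) we get $\Jc(\lambda\b F(\widetilde{\Sigma}))_p = \tau(\lambda\b F(\widetilde{\Sigma})_p)$. Now I apply Corollary \ref{cor:skew-symmetricsum} to the ideal $F(\widetilde{\Sigma})_p$ in characteristic $p$: this describes $\tau(\lambda\b F(\widetilde{\Sigma}))$ explicitly in terms of the polytope $P_{\widetilde{\Sigma}}$ as the sum over $\ab \in P_{\widetilde{\Sigma}}$ of the ideals $\bigcap_{i=1}^{\lfloor n/2\rfloor} P_{2i}^{(\lfloor \lambda a_i\rfloor + 1 - (n/2-i+1)(n-2i+1))}$. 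Crucially, this formula is independent of $p$, so it transports back to characteristic $0$: a product of Pfaffians of shape $\alpha$ lies in $\Jc(\lambda\b P(\Sigma))$ iff there is some $\ab = (a_1,\ldots,a_{\lfloor n/2\rfloor}) \in P_{\widetilde{\Sigma}}$ with $\gamma_i(\alpha) \geq \lfloor \lambda a_i\rfloor + 1 - (n/2-i+1)(n-2i+1)$ for all $i$, using Theorem \ref{thm:symbpf} to translate the symbolic-power membership into a statement about shapes.

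The only thing left is the bookkeeping identification $\widetilde{P} = P_{\widetilde{\Sigma}}$, i.e. that the convex hull of $\{(\gamma_1(\widetilde{\sigma}),\ldots,\gamma_{\lfloor n/2\rfloor}(\widetilde{\sigma})) : \sigma \in \Sigma\}$ appearing in the statement agrees with the polytope $P_{\widetilde{\Sigma}}$ from Section \ref{sec:basics}; this is immediate from the definition of $P_\Sigma$ and the definition of $\widetilde{\Sigma}$. The $\lct$ formula then drops out as the least $\lambda$ for which the displayed ideal is proper, exactly as in the generic case. The main obstacle is essentially nonexistent mathematically --- everything is a faithful transcription of the proof of Theorem \ref{thm:mult} --- so the only care needed is to make sure the correspondence $\sigma \mapsto \widetilde{\sigma}$ (halving the transpose) is used consistently, that the hypothesis $\Sigma \subseteq \P_{\lfloor n/2\rfloor}\cap \Cc_e$ is precisely what guarantees $\tl\sigma_i$ is even so that $\widetilde{\sigma}$ is a genuine diagram, and that the height formula $\height(P_{2i}) = (n/2-i+1)(n-2i+1)$ used in Example \ref{ex:skew-symmetric+} and Corollary \ref{cor:skew-symmetricsum} is quoted correctly.
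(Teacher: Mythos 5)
Your proposal is correct and is exactly the paper's argument: the paper proves Theorem \ref{thm:multpf} by remarking that the proof of Theorem \ref{thm:mult} carries over verbatim, i.e.\ one replaces $\overline{I(\Sigma)}=\overline{D(\tl\Sigma)}$ by $\overline{P(\Sigma)}=\overline{F(\widetilde{\Sigma})}$, uses invariance of multiplier ideals under integral closure, reduces mod $p\gg 0$ via the Hara--Yoshida comparison \eqref{eq:HY}, and applies Corollary \ref{cor:skew-symmetricsum}. Your bookkeeping points (the identification $\widetilde{P}=P_{\widetilde{\Sigma}}$, the role of $\Cc_e$, and the height formula for $P_{2i}$) are all handled the same way in the paper.
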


\begin{remark}\label{rem:mon}
To conclude, another class of ideals of $S=K[x_1,\ldots ,x_N]$ satisfying the condition ($\ast$) of Definition \ref{def:ast} is the class of monomial ideals $I$. With the notation of Definition \ref{def:ast}, $\pp_1=(x_1)$, $\ldots$, $\pp_N=(x_N)$ and $P\subseteq \RR^N$ is the Newton polytope $\mathrm{NP}(I)$ of $I$, that is the convex hull of the exponents corresponding to a minimal system of monomial generators of $I$ (cf. \cite[Proposition 3.4]{Tessier}. The polynomial $F\in S$ doing the job is just $F=x_1\cdots x_N$, and any term ordering is good.

By Theorem \ref{thm:main2}, so, if $K$ has positive characteristic and $I\subseteq S$ is a monomial ideal, $\forall \ \lambda\in \RR_{>0}$ we have:
\[\tau(\lambda\b I)=(x_1^{\lfloor \lambda a_1\rfloor}\cdots x_N^{\lfloor \lambda a_N\rfloor}:(a_1,\ldots ,a_N)\in \mathrm{NP}(I))=(x_1^{b_1}\cdots x_N^{b_N}:(b_1+1,\ldots ,b_N+1)\in \lambda\cdot\mathrm{NP}(I)\cap\ZZ^N).\]
Notice also that ideals defined by a single monomial have floating test ideals.

In characteristic 0, by exploiting \eqref{eq:HY} as in the proof of Theorem \ref{thm:mult}, we recover the formula of Howald \cite{Howald} for the multiplier ideals of monomial ideals (see also \cite[Section 9.3.C]{La}:
\[\Jc(\lambda\b I)=(x_1^{b_1}\cdots x_N^{b_N}:(b_1+1,\ldots ,b_N+1)\in \lambda\cdot\mathrm{NP}(I)\cap\ZZ^N).\]
\end{remark}

\end{document}